\newtheorem{theorem}{Theorem}
\newtheorem{conjecture}[theorem]{Conjecture}
\newtheorem{corollary}[theorem]{Corollary}
\newtheorem{definition}[theorem]{Definition}
\newtheorem{example}[theorem]{Example}
\newtheorem{lemma}[theorem]{Lemma}
\newtheorem{proposition}[theorem]{Proposition}
\newtheorem{remark}[theorem]{Remark}
\newcommand{\real}{\mathbb{R}}
\newcommand{\calC}{\mathcal{C}}
\begin{document}
\title{
On the Law of Free Subordinators
}
\author{Octavio Arizmendi\footnote{Supported by funds of R. Speicher from the Alfried Krupp von Bohlen und Halbach Stiftung. E-mail: arizmendi@math.uni-sb.de} \\ Universit\"{a}t des Saarlandes, FR $6.1-$Mathematik,\\ 66123 Saarbr\"{u}cken, Germany \\ \\ Takahiro Hasebe\footnote{Supported by Global COE Program at Kyoto University. E-mail: thasebe@math.kyoto-u.ac.jp}\\Graduate School of Science,  Kyoto University,\\  Kyoto 606-8502, Japan \\ \\ Noriyoshi Sakuma\footnote{Supported by  JSPS KAKENHI Grant Number 24740057. E-mail: sakuma@auecc.aichi-edu.ac.jp} \\ Department of Mathematics, Aichi University of Education\\ 1 Hirosawa, Igaya-cho, Kariya-shi, 448-8542, Japan\\} 

\date{\today}
\maketitle
\begin{abstract}
We study the freely infinitely divisible distributions that appear as the laws of free subordinators.
This is the free analog of classically infinitely divisible distributions supported on $[0,\infty)$, called the free regular measures. 

 We prove that the class of free regular measures is closed under the free multiplicative convolution, $t$th boolean power for $0\leq t\leq 1$, $t$th free multiplicative power for $t\geq 1$ and weak convergence. 

In addition, we show that a symmetric distribution is freely infinitely divisible if and only if its square can be represented as the free multiplicative convolution of a free Poisson and a free regular measure. 

This gives two new explicit examples of distributions which are infinitely divisible with respect to both classical and free convolutions: $ \chi^2(1)$ and $ F(1,1)$.
 Another consequence is that the free commutator operation preserves free infinite divisibility. 

\end{abstract}
\newpage

\section{Introduction}

A one dimensional subordinator $(X_{t})_{t\geq 0}$ is a L\'evy process whose increments are always nonnegative.
The marginal distributions $(\mu_{t})_{t\geq0}$ of a subordinator $(X_{t})_{t\geq 0}$
are infinitely divisible and
their L\'evy-Khintchine representations have {\it regular} forms for any $t\geq0$:
\begin{equation}\label{eq01}
\mathcal{C}^{\ast}_{\mu_{t}}(z):=\log \left(\int_{\real}e^{izx} \mu_{t}(dx)\right)
=i t \eta' z + t \int_{(0,\infty)}(e^{izx}-1)\nu(dx),
\end{equation}
where the drift term $\eta'$ satisfies $\eta'\geq0$ and the L\'evy measure $\nu$ satisfies 
$\int_{(0,\infty)}(1\wedge x)\nu(dx)<\infty$ and $\nu((-\infty,0])=0$.
Poisson processes, positive stable processes and Gamma processes are typical examples.
Subordinators have been broadly studied, see for example, Bertoin\cite{Be99} and Sato\cite{Sato}. 
For applications in financial modeling, see Cont and Tankov\cite{CoTa}. 
A matrix valued extension has been considered in Barndorff-Nielsen and P\'{e}rez-Abreu\cite{PABN}.

A crucial property is that the class of infinitely divisible distributions with regular L\'evy-Khintchine representations is closed under $\ast$-convolution powers.
Namely, a $\ast$-infinitely divisible distribution $\mu$ has a regular L\'evy-Khintchine representation if and only if
$\mu_{t}=\mu^{\ast t}$ is concentrated on $[0,\infty)$ for all $t>0$.
See for details Theorem 24.11 in  p.146 of the book by Sato\cite{Sato}.

In free probability theory, the free convolution or $\boxplus$-convolution was introduced 
by Voiculescu \cite{Vo86} in order to describe the sum of free random variables. The main analytic tool for the study of free convolution
is the so-called Voiculescu's  R-transform or free cumulant transform,
denoted here by $\mathcal{C}^{\boxplus}_{\mu}(z)$.
The basic property of the free cumulant transform is that it linearizes the free convolution:
$$
\mathcal{C}^{\boxplus}_{\mu\boxplus\rho}(z) = \mathcal{C}^{\boxplus}_{\mu}(z)+\mathcal{C}^{\boxplus}_{\rho}(z).
$$

Similarly to the classical case, one can define  free L\'evy processes and free infinite divisibility with respect to free convolution. One  obtains the corresponding  L\'evy-Khintchine representation for the free cumulant transform.
This representation is also given in terms of a characteristic triplet $(\eta,a,\nu)$ 
that satisfies the same properties as in the classical case. 
This produces a bijection $\Lambda$, first introduced by Bercovici and Pata \cite{Be-Pa99},
between classically and freely infinitely divisible distributions. 

In this context, we can also define the free counterpart of laws of subordinators, that is $\rho_{t} = \Lambda(\mu_{t})$, where $\mu_t$ has the regular form (\ref{eq01}).
The free cumulant transforms of the laws $(\rho_{t})_{t\geq 0}= (\rho^{\boxplus t})_{t\geq 0}$ have the \emph{free regular representations} 
\begin{equation}
\mathcal{C}^{\boxplus}_{\rho_{t}}(z)=t \eta' z+t\int_{\mathbb{R}}\left( \frac{1}{1-zx}-1 \right) \nu \left(
dx\right) ,\quad z\in \mathbb{C}_{-}\text{,}  \label{levykintchine libre}
\end{equation}
where $(\eta',\nu)$ is the pair of (\ref{eq01}) with the same conditions: $\eta'\geq0$, 
$\int_{(0,\infty)}(1\wedge x)\nu(dx)<\infty$ and $\nu((-\infty,0])=0$. It is readily seen that this class is closed under the convolution $\boxplus$.

Let us note here an important difference 
between classically and freely infinitely divisible distributions on the cone $[0,\infty)$.
 Any classically infinitely divisible distribution supported on $[0,\infty)$ satisfies that $\mu_{t}=\mu^{\boxplus t}$ is concentrated on $[0,\infty)$ for all time $t>0$, and thus have a regular representation. However, we can easily find a freely infinitely divisible distribution concentrated on $[0,\infty)$, but $\mu_{t}=\mu^{\boxplus t}$ is not concentrated on 
 $[0,\infty)$ for all time $t>0$. 
For example, the semicircle distribution
with mean $2$ and variance $1$. If we construct a free L\'evy process from this distribution, 
the laws $\mu_{t}$ for $t\geq1$ concentrate on $[0,\infty)$ 
but do not for $0<t<1$, 
see \cite{Sa10} for more details.  Thus, in this sense the correct counterpart of the class of $\ast$-infinitely divisible distributions supported on $[0,\infty)$ is the class of free regular measures.


The main purpose of this paper is to show strong closure properties of the class of free regular measures under different convolutions as well as several important consequences.
More specifically, we prove that the class of free regular measures is closed 
not only under free additive convolution $\boxplus$
but also under free multiplicative convolution $\boxtimes$ and boolean convolution powers.

As a first important consequence, we characterize the laws of free subordinators in terms of free regularity. 
More precisely, $(Z_{t})_{t\geq0}$ is a free L\'evy process 
such that the distribution of $Z_{t}-Z_{s}$ has non-negative support if and only if the law $Z_{1}$ is free regular.

As a second important consequence,
if $X$ and $Y$ are two free independent random variables with 
free regular distributions, then $X^{1/2}YX^{1/2}$ also follows a free regular distribution, which is not true in the classical case.
See Example 11.3 in Chapter $2$ of the book by Steutel and van Harn \cite{StVH}.

Other results and the organization of this paper are as follows. 
First, we state the main theorems in Section 2.
In Section 3 we review some basic theory of non-commutative probability. We recall free additive and multiplicative convolutions and the analytic tools to calculate them. We state basic results on free infinite divisibility such as L\'evy-Khintchine representations and the Bercovici-Pata bijection $\Lambda$. Also, we explain boolean additive convolution and recall the boolean-to-free Bercovici-Pata bijection $\mathbb{B}$. Section 4 is devoted to the description of different characterizations of free regular measures. 
In Section 5 we derive, using the characterizations of Section 3, closure properties as explained 
in Theorem \ref{closur}.
In Section 6 we essentially prove Theorem \ref{main3} below, which in particular shows that the square of a symmetric freely infinitely divisible distribution is freely infinitely divisible.
We partially show that, for selfadjoint operators, the free infinite divisibility is preserved under the free commutator operation. This fact is fully proved in Appendix with combinatorial techniques. Finally, in Section 7 we gather examples using results of previous sections and present open problems regarding these examples.
At the end of paper, we give an appendix where combinatorial interpretation of Theorem \ref{main3} is discussed. It contributes to study free commutators.

\section{Main results}
Let $\mathcal{M}$ be the class of all Borel probability measures on the real line $\mathbb{R}$ and let $\mathcal{M}^{+}$ be the subclass of $\mathcal{M}$ consisting of probability measures with support on $\mathbb{R}_{+}=[0,\infty )$. Also, for two probability measures $\mu,\nu\in\mathcal{M}$, we denote by $\mu\ast\nu$, $\mu\boxplus\nu$ and $\mu\uplus\nu$ the classical, free and boolean additive convolutions, respectively. When $\nu\in\mathcal{M}^+$ we denote by $\mu\boxtimes\nu$ the free multiplicative convolution. They will be defined precisely in Section \ref{prel}. 

Let $I^{\ast}$ be the class of all classically infinitely divisible distributions and $I^{\boxplus}$ be the class of all freely infinitely divisible distributions. 
An important subclass of $I^{\ast }$ is the class of infinitely divisible measures supported on $\mathbb{R}_{+}$, that is, $I^{\ast }\cap\mathcal{M}^{+}$. This class has regular L\'evy-Khintchine representations.

Free regular measures are the free analogue of $I^{\ast }\cap\mathcal{M}^{+}$. 
More precisely, let $I_{r+}^{\boxplus }:=\Lambda(I^{\ast }\cap\mathcal{M}^{+})$, where $\Lambda:I^*\rightarrow I^\boxplus$ is the Bercovici-Pata bijection, which is defined in Section $3$. This class  $ I_{r+}^{\boxplus }$ was first considered in \cite{PeSa} in connection to free multiplicative mixtures of the Wigner distribution. It is remarkable that $ I_{r+}^{\boxplus } \subset I^{\boxplus }\cap\mathcal{M}^{+}$ but $ I_{r+}^{\boxplus }\neq I^{\boxplus }\cap\mathcal{M}^{+}$; the Bercovici-Pata bijection can send measures with support larger than $\mathbb{R}_+$ to measures concentrated on $[0,\infty)$. 

The main results are as follows. First, we will see that $I_{r+}^{\boxplus }$ describes the distributions of free L\'{e}vy processes with positive increments, that we will call \emph{free subordinators}. For free L\'evy processes, contrary to the classical, boolean and monotone cases, the positivity of the marginal distribution at time $t=1$ does not imply the positivity of all increments.

Second, $ I_{r+}^{\boxplus }$ behaves well with respect to various operations in non-commutative probability. More specifically, we are able to prove the following. 

\begin{theorem}\label{closur}
Let $\mu, \nu $ be free regular measures and let $\sigma $ be a freely
infinitely divisible distribution. Then the following properties hold. 

\begin{enumerate}[{\rm (1)}]
	\item
	$\mu \boxtimes \nu $ is free regular.
	\item
	$\mu ^{\boxtimes t}$ is free regular for $t \geq 1.$
	\item
	$\mu ^{\uplus t}$ is free regular for $0\leq t\leq 1.$
	\item
	$\mu \boxtimes \sigma $ is freely infinitely divisible.
\end{enumerate}
\end{theorem}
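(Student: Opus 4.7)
I would organize the proof around the analytic characterizations of $I_{r+}^{\bxp}$ established in Section 4. The natural such characterization is a Nevanlinna--Pick representation of $-\log S_\mu$ (equivalently of $-\log\Sigma_\mu$) on $\C\setminus[0,\infty)$ that is the analytic counterpart of the free regular L\'evy--Khintchine formula (\ref{levykintchine libre}); a general freely infinitely divisible $\sigma$ admits a similar but weaker representation coming from the unrestricted free L\'evy--Khintchine formula.

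For (1) and (2), the multiplicativity of the $S$-transform, $S_{\mu\bxt\nu}=S_\mu S_\nu$ and $S_{\mu^{\bxt t}}=S_\mu^{\,t}$, reduces both claims to showing that the function class $\{-\log S_\mu : \mu\in I_{r+}^{\bxp}\}$ is closed under pointwise addition and under multiplication by any $t\ge 1$; both closures follow immediately from the Nevanlinna--Pick form. The restriction $t\ge 1$ in (2) reflects the standard fact that $\mu^{\bxt t}$ exists as a probability measure in $\calM^+$ only for $t\ge 1$ when $\mu\in\calM^+$.

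For (3), I would pass to the boolean setting. The boolean cumulant transform $\eta_\mu$ linearizes $\uplus$ via $\eta_{\mu^{\up t}}=t\,\eta_\mu$, and the Belinschi--Nica semigroup $\B_t(\mu)=(\mu^{\bxp(1+t)})^{\up 1/(1+t)}$ reduces at $t=1$ to the boolean-to-free Bercovici--Pata bijection $\B$. An expected characterization from Section 4 is that $\mu\in I_{r+}^{\bxp}$ iff $\mu=\B(\rho)$ for some $\rho\in\calM^+$. Rearranging the definition of $\B_t$ yields the identity $\mu^{\up s}=\B_{1/s-1}(\mu^{\bxp s})$ valid for $s\in(0,1]$, and for free regular $\mu$ the measure $\mu^{\bxp s}$ is again free regular, so one can use the known behavior of $\B_t$ on free regular inputs to realize $\mu^{\up s}$ in the form $\B(\rho')$ with $\rho'\in\calM^+$.

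For (4), the $S$-transform identity gives $-\log S_{\mu\bxt\sigma}=-\log S_\mu+(-\log S_\sigma)$. Since $-\log S_\mu$ belongs to the stronger free regular Nevanlinna class and $-\log S_\sigma$ to the weaker freely infinitely divisible class, the sum lies in the weaker class, so $\mu\bxt\sigma\in I^{\bxp}$. The main obstacle across the four parts is (3): the interaction of boolean and free operations requires careful bookkeeping of parameter ranges and depends crucially on the precise analytic characterization of $I_{r+}^{\bxp}$ produced in Section 4, whereas the other parts reduce to elementary closure properties of the relevant Pick classes once the analytic descriptions of $I_{r+}^{\bxp}$ and $I^{\bxp}$ are in hand.
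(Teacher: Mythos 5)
The load-bearing step in your treatment of (1), (2) and (4) is a characterization that the paper never establishes and that is not available: you assert that $\mu\in I_{r+}^{\boxplus}$ is equivalent to $-\log S_\mu$ admitting a Nevanlinna--Pick representation, and that $I^{\boxplus}$ is likewise described by a ``weaker'' function class of $-\log S_\sigma$ closed under adding the ``stronger'' one. Section 4 characterizes $I_{r+}^{\boxplus}$ as $\Lambda(I^{\ast}\cap\mathcal{M}^+)$, as $\mathbb{B}(\mathcal{M}^+)$, by positivity of all powers $\mu^{\boxplus t}$, by conditions on $\phi_\mu$ and $\tau_\mu$, and via free subordinators --- nothing about the $S$-transform. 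Free \emph{additive} infinite divisibility is not captured by the $S$-transform in this way; the Pick-exponential description you seem to have in mind is the Bercovici--Voiculescu characterization of $\boxtimes$-infinitely divisible measures on $[0,\infty)$, which is a different class from both $I_{r+}^{\boxplus}$ and $I^{\boxplus}$. For (4) the situation is worse than a missing lemma: the inclusion ``stronger $+$ weaker $\subseteq$ weaker'' is exactly the statement $\mu\boxtimes\sigma\in I^{\boxplus}$ rewritten, so the argument begs the question; moreover, for a general $\sigma\in I^{\boxplus}$ (not in $\mathcal{M}^+$, not symmetric) $S_\sigma$ is not even defined in the framework used here, whereas $\mu\boxtimes\sigma$ is, since $\mu\in\mathcal{M}^+$. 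Sakuma's example $w_+\boxtimes w_+$ shows that any naive additive function-class description of $I^{\boxplus}$ under $\boxtimes$ must fail, so this is a genuine obstruction, not a presentational gap. The paper instead proves (1) and (4) from the Belinschi--Nica homomorphism $\mathbb{B}(\mu\boxtimes\nu)=\mathbb{B}(\mu)\boxtimes\mathbb{B}(\nu)$ together with $I_{r+}^{\boxplus}=\mathbb{B}(\mathcal{M}^+)$ and $I^{\boxplus}=\mathbb{B}(\mathcal{M})$, and proves (2) from the dilation identity $D_{t^{s-1}}\bigl((\mu^{\boxtimes s})^{\boxplus t}\bigr)=(\mu^{\boxplus t})^{\boxtimes s}$ combined with the characterization by positivity of all $\boxplus$-powers.

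Your sketch of (3) is the closest to a proof and is in the spirit of what the paper does (which uses the identity $\mathbb{B}\bigl((\mu^{\boxplus(1-t)})^{\uplus t/(1-t)}\bigr)=\mu^{\uplus t}$ from the Belinschi--Nica/Arizmendi--Hasebe circle of results), but as written it is circular: the ``known behavior of $\mathbb{B}_t$ on free regular inputs'' that you invoke is, for the parameters that occur, precisely the claim that a boolean power $\le 1$ of a positive (indeed free regular) measure is free regular --- i.e.\ statement (3). To close it you would need to argue through $\mu^{\boxplus s}=\mathbb{B}_1(\rho)$ with $\rho\in\mathcal{M}^+$, the semigroup law $\mathbb{B}_t\circ\mathbb{B}_u=\mathbb{B}_{t+u}$, and the fact that $\rho\mapsto\rho^{\boxplus(1+t)}$, $t\ge0$, preserves $\mathcal{M}^+$, so that $\mu^{\uplus s}=\mathbb{B}_1\bigl(\mathbb{B}_{1/s-1}(\rho)\bigr)$ with $\mathbb{B}_{1/s-1}(\rho)\in\mathcal{M}^+$; none of these steps appears in your proposal.
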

Of particular interest is the fact that $ I_{r+}^{\boxplus }$ is closed under free multiplicative convolution. It was proved by Belinschi and Nica \cite {BN08} that the boolean-to-free Bercovici-Pata bijection $\mathbb{B}$ is a homomorphism with respect to free multiplicative convolution. This suggested strongly that free infinite divisibility was preserved under free multiplicative convolution. Surprisingly, this is not true, even if we restrict to measures in $\mathcal{M}^+$. Therefore, $ I_{r+}^{\boxplus } $ is a natural class to consider, since it solves this apparent flaw. 

The final result shows that  if a symmetric random variable $X$ has a distribution in $I^\boxplus$, so does the square $X^2$.  
This result is quite surprising since there is no analog in the classical world.  We describe this result precisely below. For $p \geq 0$, let $\mu^p$ denote the probability measure on $[0,\infty)$ induced by the map $x \mapsto |x|^p$. 

\begin{theorem}\label{main3} Let $\mu$ be a symmetric measure and $m$ be the free Poisson law with density $\frac{1}{2\pi}\sqrt{\frac{4-x}{x}}$. 

\begin{enumerate}[{\rm (1)}]
\item
If $\mu$ is $\boxplus$-infinitely divisible, then there is a free regular measure $\sigma$ such that $\mu^2 = m \boxtimes \sigma$. In particular, $\mu^2 \in I_{r+}^\boxplus$. 
Conversely, if $\sigma$ is free regular, then $\text{\normalfont Sym}\left((m \boxtimes \sigma)^{1/2}\right)$ is $\boxplus$-infinitely divisible distribution, where $\text{\normalfont Sym}(\nu)$ is the symmetrization of $\nu \in \mathcal{M}^+$: $\text{\normalfont Sym}(\nu)(dx):=\frac{1}{2}\left(\nu(dx)+\nu(-dx)\right)$.  

\item
If  $\mu$ is a compound free  Poisson with rate $\lambda$ and jump distribution $\nu$, then $\sigma$ from (1) is also a compound free Poisson with rate $\lambda$ and jump distribution $\nu ^{2}$. 
\end{enumerate}

\end{theorem}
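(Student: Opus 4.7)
My plan is to reduce the theorem to an identity between free cumulant transforms, using the symmetry of $\mu$ and the fact that $S_m(z) = 1/(1+z)$ for the standard free Poisson $m$. For any symmetric probability measure $\mu$ only even moments survive, so $\psi_\mu(z) = \psi_{\mu^2}(z^2)$; inverting gives $\chi_{\mu^2}(z) = \chi_\mu(z)^2$, where $\chi = \psi^{-1}$. Combined with $S_m(z) = 1/(1+z)$ and multiplicativity of the $S$-transform, the equation $\mu^2 = m \boxtimes \sigma$ is equivalent to $\chi_\sigma(z) = (1+z)\chi_\mu(z)^2$. Using the standard relation $R_\pi((1+s)\chi_\pi(s)) = s/((1+s)\chi_\pi(s))$ applied to both $\mu$ and $\sigma$, this is in turn equivalent to the cumulant identity
\[
\mathcal{C}^\boxplus_\mu(z) = \mathcal{C}^\boxplus_\sigma(z^2),
\]
so the proof reduces to exhibiting a free regular $\sigma$ whose cumulant transform, evaluated at $z^2$, recovers $\mathcal{C}^\boxplus_\mu(z)$.

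Next, I construct $\sigma$ explicitly from the free L\'evy--Khintchine triplet of $\mu$. Symmetry forces the triplet to be $(0, a, \nu)$ with $\nu$ symmetric; writing $\nu^+ = \nu|_{(0,\infty)}$, symmetry collapses the L\'evy integral to
\[
\mathcal{C}^\boxplus_\mu(z) = az^2 + \int_0^\infty \frac{2 z^2 x^2}{1 - z^2 x^2}\,\nu^+(dx),
\]
visibly a power series in $z^2$. Substituting $w = z^2$, I define $\sigma$ as the $\boxplus$-infinitely divisible measure with cumulant transform
\[
\mathcal{C}^\boxplus_\sigma(w) = aw + \int_0^\infty \frac{yw}{1 - yw}\,\eta(dy),
\]
where $\eta$ is the pushforward of $2\nu^+$ under $x \mapsto x^2$. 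This is precisely the free regular representation (\ref{levykintchine libre}) with drift $a \geq 0$ and L\'evy measure $\eta$ on $(0,\infty)$ satisfying $\int(1\wedge y)\eta(dy) = \int(1\wedge x^2)\nu(dx) < \infty$, so $\sigma \in I_{r+}^\boxplus$. By the previous paragraph $\mu^2 = m \boxtimes \sigma$, and since $m \in I_{r+}^\boxplus$, Theorem~\ref{closur}(1) yields $\mu^2 \in I_{r+}^\boxplus$. For the compound free Poisson case (rate $\lambda$, symmetric jump $\nu$) one has $a = 0$ and $\eta = \lambda\nu^2$, so $\sigma$ is itself compound free Poisson with rate $\lambda$ and jump $\nu^2$; this gives (2).

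For the converse direction of (1), I reverse the construction: given $\sigma \in I_{r+}^\boxplus$ with free regular triplet $(\eta',\eta)$, let $\nu^+$ be the pushforward of $\eta/2$ under $y \mapsto \sqrt{y}$, let $\nu$ be its symmetrization, and let $\mu$ be the symmetric $\boxplus$-infinitely divisible measure with triplet $(0, \eta', \nu)$. The forward direction gives $\mu^2 = m \boxtimes \sigma$; since $\mu$ is symmetric, $\mu = \text{\normalfont Sym}(|\mu|) = \text{\normalfont Sym}((m \boxtimes \sigma)^{1/2})$, which is therefore $\boxplus$-infinitely divisible. The main technical obstacle lies in upgrading the formal power-series identity $\mathcal{C}^\boxplus_\mu(z) = \mathcal{C}^\boxplus_\sigma(z^2)$ to the measure-level equality $\mu^2 = m \boxtimes \sigma$: this requires the $S$-transforms to agree on a common analytic domain, which is available because both $\mu^2$ and $m \boxtimes \sigma$ are positive measures (the latter by Theorem~\ref{closur}(1)) and the $S$-transform uniquely determines probability measures in $\mathcal{M}^+$.
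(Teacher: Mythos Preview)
Your proof is correct and follows essentially the same route as the paper. Both arguments reduce the claim to the equivalence $\mu^2 = m\boxtimes\sigma \iff \mathcal{C}^{\boxplus}_\mu(z) = \mathcal{C}^{\boxplus}_\sigma(z^2)$ via $S$-transform arithmetic (your identity $\chi_{\mu^2}=\chi_\mu^2$ is exactly the relation $S_{\mu^2}(z)=\tfrac{z}{1+z}S_\mu(z)^2$ from \cite{APA09} that the paper invokes, and both then pass to $zS_\sigma(z)=(zS_\mu(z))^2$ and invert). The only real difference is that where the paper cites Theorem~\ref{thm123} of P\'erez-Abreu--Sakuma as a black box for the existence of a free regular $\sigma$ with $\mathcal{C}^{\boxplus}_\mu(z)=\mathcal{C}^{\boxplus}_\sigma(z^2)$ and for the correspondence of L\'evy triplets, you reprove that result directly by symmetrizing the free L\'evy--Khintchine integrand and pushing forward under $x\mapsto x^2$; this makes your write-up more self-contained but not genuinely different in strategy.
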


 As a consequence we find two new explicit examples of measures which are infinitely divisible in both free and classical senses
: $\chi^2(1)$ and $F(1,1).$ To the best of our knowledge, apart from these two examples, there are only three known measures with this property: the normal law, the Cauchy distribution and the free $1/2$ stable law.
 
 Secondly, we get as a byproduct that the free commutator of freely infinitely divisible measures is also infinitely divisible.

\section{Preliminaries}\label{prel}

\subsection{Analytic tools for free convolutions}

Following \cite{VoDyNi92}, we recall that a pair $(\mathcal{A},\varphi )$ is called a $W^{\ast }$-\textit{probability space} if $\mathcal{A}$ is a von Neumann algebra and $ \varphi $ is a normal faithful trace. A family of
unital von Neumann subalgebras $\left\{ \mathcal{A}_{i}\right\} _{i\in I}\mathcal{\subset A}$ is said to be \textit{free} if $\varphi (a_{1}\cdot \cdot
\cdot a_{n})=0$ whenever $\varphi (a_{j})=0,a_{j}\in \mathcal{A}_{i_{j}},$ and $i_{1}\neq
i_{2},i_{2}\neq i_{3},...,i_{n-1}\neq i_{n}.$ A self-adjoint operator $X$\textit{\ is said to be
affiliated with} $ \mathcal{A}$ if $f(X)\in \mathcal{A}$ for any bounded Borel function $f$ on
$ \mathbb{R}$. In this case it is also said that $X$ is a (non-commutative) \textit{random
variable}. Given a self-adjoint operator $X$ affiliated with $ \mathcal{A}$, the
\textit{distribution} of $X$ is the unique measure $\mu _{X}$ in $\mathcal{M}$ satisfying
\begin{equation*}
\varphi (f(X))=\int_{\mathbb{R}}f(x)\mu _{X}(\mathrm{d}x)
\end{equation*}
for every Borel bounded function $f$ on $\mathbb{R}$. \ If $\left\{ \mathcal{ A}_{i}\right\}
_{i\in I}$ is a family of free unital von Neumann subalgebras and $X_{i}$ is a random variable
affiliated with $\mathcal{A}_{i}$ for each $ i\in I$, then the \textit{random variables}
$\left\{ X_{i}\right\} _{i\in I}$ are said to be \textit{free}.

Let $\mathbb{C}_+$ and $\mathbb{C}_-$ denote the upper and lower half-planes, respectively. 
The \textit{Cauchy transform} of a probability measure $\mu$ on $\mathbb{R}$ is defined, for
$z\in \mathbb{C}\backslash \mathbb{R}$, by
$$G_{\mu }(z)=\int_{\mathbb{R}}\frac{1}{z-x}\mu \left( \mathrm{d}x\right).$$
It is well known that $G_{\mu }:\mathbb{C}_{+}\to \mathbb{C}_{-}$ is analytic and that $G_{\mu }$ determines uniquely the
measure $\mu$. 
The \textit{reciprocal Cauchy transform} is the function
$F_{\mu}:\mathbb{C}_{+}\rightarrow \mathbb{C}_{+}$ defined by $F_{\mu }\left(
z\right) =1/G_{\mu }(z).$ It was proved in \cite{BeVo93} that there are positive numbers $\alpha $
and $M$ such that $F_{\mu }$ has a right inverse $F_{\mu }^{-1}$ defined on the region
$$
\Gamma _{\alpha ,M}:=\left\{ z\in \mathbb{C};\left\vert \Re(z)\right\vert <\alpha\Im(z),\,\, 
\Im(z)>M\right\} . 
$$
The \textit{Voiculescu transform of }$\mu $ is defined by
$$
\phi _{\mu }(z)=F_{\mu }^{-1}(z)-z  \label{VoTr}
$$
on any region of the form $\Gamma _{\alpha ,M}$ where $F_{\mu }^{-1}$ is defined, see \cite{BeVo93}. The \textit{free cumulant transform} is a variant of $\phi _{\mu }$
defined as
	$$
	\mathcal{C}_{\mu }^{\boxplus}(z)=z\phi _{\mu }\left(\frac{1}{z} \right)
	=zF_{\mu }^{-1}\left(\frac{1}{z}\right) -1,  
	$$
for $z \in D_{\mu }:=\{z \in \mathbb{C}_-: z^{-1} \in \Gamma _{\alpha,M}\}$, see \cite{BNT06}.

The \textit{free additive convolution} $\mu_1 \boxplus \mu_2$ of two probability measures $\mu _{1},\mu _{2}$ on $ \mathbb{R}$ is defined so that 
$\phi _{\mu _{1}\boxplus \mu _{2}}\mathcal{(}z)=\phi _{\mu _{1}}
\mathcal{(}z)+\phi _{\mu _{2}}\mathcal{(}z)$, or equivalently, 
	$
	\mathcal{C}_{\mu _{1}\boxplus \mu _{2}}^{\boxplus }\mathcal{(}z)
	=\mathcal{C} _{\mu_{1}}^{\boxplus }\mathcal{(}z)
	+\mathcal{C}_{\mu _{2}}^{\boxplus }\mathcal{(}z)
	$
for $z\in D_{\mu _{1}}\cap D_{\mu _{2}}$. The measure $\mu _{1}\boxplus \mu _{2}$ is the
distribution of the sum $X_{1}+X_{2}$ of two free random variables $X_{1}$ and $X_{2}$ having
distributions $\mu _{1}$ and $\mu _{2}$ respectively.
 
The\textit{\ free multiplicative convolution} $\mu_1\boxtimes \mu_2$ of probability measures $\mu_1,\mu_2 \in \mathcal{M}$, one of them in $\mathcal{M}^+$, say $\mu_1 \in \mathcal{M}^+$, is defined as the distribution of $\mu _{X_{1}^{1/2}X_{2}X_{1}^{1/2}}$ where 
$X_1 \geq 0, X_2$ are free, self-adjoint elements such that $\mu _{X_{i}}=\mu _{i}$. The element $ X_{1}^{1/2}X_{2}X_{1}^{1/2}$ is self-adjoint and its distribution depends only on $\mu _{1}$ and $\mu _{2}.$ The operation $\boxtimes$ on $\mathcal{M}^+$ is associative and commutative.

The next result was proved in \cite{BeVo93}. 

\begin{proposition}
\label{PisiForPos} Let $\mu\in\mathcal{M}^{+}$ such that $\mu(\{0\})<1$. The function
	$
	\Psi_{\mu}(z)=\int_{0}^{\infty}\frac{zx}{1-zx}\mu(\mathrm{d}x)
	$
defined in $\mathbb{C}\backslash\mathbb{R}_{+}$ is univalent in the left-plane $i\mathbb{C}_{+}$ and 
$\Psi_{\mu}(i\mathbb{C} _{+})$ is a region contained in the circle with diameter $(\mu(\{0\})-1,0)$. Moreover,
$\Psi_{\mu}(i\mathbb{C}_{+})\cap\mathbb{R}=(\mu(\{0\})-1,0)$.
\end{proposition}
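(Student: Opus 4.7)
The plan is to prove the three assertions — image containment in the half-disk, identification of the real slice, and univalence — in that order. I will begin by writing $\Psi_\mu(z)=-1+h_\mu(z)$ with $h_\mu(z):=\int_0^\infty(1-zx)^{-1}\mu(dx)$. For the image claim, the key observation is that for $z\in i\mathbb{C}_+$ (that is, $\Re z<0$) and $x>0$, the identity $|1+zx|^2-|1-zx|^2=4x\,\Re z<0$ is equivalent to $|(1-zx)^{-1}-\tfrac12|<\tfrac12$, so $(1-zx)^{-1}$ lies strictly inside the open disk with diameter $(0,1)$. Since this disk is convex and the atom of $\mu$ at $0$ contributes the point $1$, the weighted average $h_\mu(z)$ lies in the closed disk with diameter $(\mu(\{0\}),1)$; subtracting $1$ gives the desired inclusion for $\Psi_\mu(z)$.

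For the real slice, I would separate real and imaginary parts. For $z=u+iv$ with $u<0$, a direct computation yields $\Im \Psi_\mu(z)=v\int_0^\infty x\,[(1-ux)^2+v^2x^2]^{-1}\,\mu(dx)$, which has the sign of $v$ and is nonzero whenever $v\neq 0$ (using $\mu(\{0\})<1$). Hence $\Psi_\mu(z)\in\mathbb{R}$ forces $z\in(-\infty,0)$, and on that ray the explicit formula $\Psi_\mu(-t)=-1+\int_0^\infty(1+tx)^{-1}\,\mu(dx)$ is strictly monotone in $t$ with limits $0$ at $t\to 0^+$ and $\mu(\{0\})-1$ at $t\to\infty$ by monotone convergence, identifying the image with exactly $(\mu(\{0\})-1,0)$.

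For univalence, my engine will be the difference identity
\[
\Psi_\mu(z_1)-\Psi_\mu(z_2)=(z_1-z_2)\int_0^\infty\frac{x}{(1-z_1x)(1-z_2x)}\,\mu(dx),
\]
obtained by combining the two integrands over a common denominator. Together with the imaginary-part sign analysis above, injectivity reduces to a short case check on the locations of $z_1,z_2\in i\mathbb{C}_+$. If one of the $z_j$ is real and the other is not, the images have different imaginary-part signs and cannot agree; if both are real, strict monotonicity of $\Psi_\mu$ on $(-\infty,0)$ applies; if they lie in the opposite open quadrants $Q_+:=\{\Re z<0,\Im z>0\}$ and $Q_-:=\{\Re z<0,\Im z<0\}$, the images again have opposite imaginary-part signs. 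The main case is $z_1,z_2\in Q_+$ (and symmetrically $Q_-$): for each $x>0$ both factors $1-z_jx$ have positive real part and negative imaginary part, so $\arg[(1-z_1x)(1-z_2x)]\in(-\pi,0)$ and hence $\Im[(1-z_1x)(1-z_2x)]^{-1}>0$. Integrating against the positive measure $x\,\mu(dx)$ (which carries positive mass since $\mu(\{0\})<1$) gives an integral with strictly positive imaginary part, hence nonzero; the difference identity then forces $\Psi_\mu(z_1)\neq\Psi_\mu(z_2)$. I anticipate the main obstacle to be the sign bookkeeping in this last case, specifically ensuring that the localization of $1-z_jx$ in the fourth quadrant holds uniformly in $x>0$.
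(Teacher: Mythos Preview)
Your argument is correct, but there is nothing in the paper to compare it against: the paper does not prove this proposition but simply quotes it from Bercovici--Voiculescu \cite{BeVo93} (see the sentence ``The next result was proved in \cite{BeVo93}'' immediately preceding the statement). Your self-contained proof therefore supplies what the paper omits.

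All three parts are sound. The disk containment via the decomposition $\Psi_\mu=-1+h_\mu$ and the convexity of the disk with diameter $(0,1)$ is clean; the identification of the real slice via the sign of $\Im\Psi_\mu(u+iv)$ and the monotonicity of $t\mapsto\Psi_\mu(-t)$ is correct, with the limits at $t\to 0^+$ and $t\to\infty$ justified by dominated convergence. For univalence, the difference identity and the quadrant case split work as you describe. The one step worth spelling out explicitly is the ``same quadrant'' case, and your localization is right: for $z_j\in Q_+$ and $x>0$ one has $\Re(1-z_jx)=1-x\,\Re z_j>1>0$ and $\Im(1-z_jx)=-x\,\Im z_j<0$, so $\arg(1-z_jx)\in(-\pi/2,0)$ strictly, whence $\arg\bigl[(1-z_1x)(1-z_2x)\bigr]\in(-\pi,0)$ and the reciprocal has strictly positive imaginary part. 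Integrating against $x\,\mu(dx)$, which has positive mass on $(0,\infty)$ by the hypothesis $\mu(\{0\})<1$, gives a nonzero integral, and the difference identity finishes the job.
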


Let $\chi _{\mu }:$ $\Psi _{\mu }(i\mathbb{C}_{+})$ $\rightarrow i\mathbb{C} _{+}$ be the
inverse function of $\Psi _{\mu }.$ The $S$-\textit{transform} of $\mu $ is the function
$
S_{\mu }(z)=\chi (z)\frac{1+z}{z}\text{.}
$
The $S$-transform is an analytic tool for computing free multiplicative convolutions. The following was first shown in \cite{Vo87b} for measures in $\mathcal{M}^{+}$ with
bounded support, and then extended to measures in $\mathcal{M}^{+}$ with unbounded support~\cite{BeVo93},   measures in $\mathcal{M}$ with compact support~\cite{RaSp07}  and symmetric measures~\cite{APA09}. 

\begin{proposition}
\label{StransfPos} Let $\mu _{1} \in \mathcal{M}^{+}$ and $\mu _{2}$ a probability measure in $\mathcal{M}^{+}$ or symmetric, with $\mu _{i}\neq \delta _{0}$, $i=1,2.$ Then $\mu _{1}\boxtimes $ $\mu _{2}\neq \delta _{0}$
and
\begin{equation*}
S_{\mu _{1}\boxtimes \mu _{2}}(z)=S_{\mu _{1}}(z)S_{\mu _{2}}(z)
\end{equation*}
in the common domain containing $(-\varepsilon ,0)$ for small $\varepsilon
>0.$ Moreover, $(\mu _{1}\boxtimes $ $\mu _{2})(\{0\})=\max \{\mu _{1}(\{0\}),\mu
_{2}(\{0\})\}.$
\end{proposition}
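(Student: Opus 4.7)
The plan is to prove the $S$-transform multiplicativity in three progressively more general settings—compactly supported positive, unbounded positive, and symmetric—and then extract the atom formula from Proposition \ref{PisiForPos} as a corollary of the factorization, following the chain of references cited by the authors.

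First, assume $\mu_1, \mu_2 \in \mathcal{M}^+$ have compact support, so they are distributions of bounded positive elements $X_1, X_2$ in free position inside a $W^*$-probability space $(\mathcal{A},\varphi)$. Following Voiculescu's combinatorial approach, I would expand the moment series $\Psi_{\mu_1 \boxtimes \mu_2}(z) = \sum_{n\geq 1} \varphi((X_1 X_2)^n) z^n$ (using traciality to identify the moments of $X_1^{1/2} X_2 X_1^{1/2}$ with those of $X_1 X_2$) via the moment-cumulant machinery for free probability. Freeness forces each non-crossing partition contributing to $\varphi((X_1 X_2)^n)$ to alternate colors between the two operators, which yields a functional factorization of $\Psi_{\mu_1 \boxtimes \mu_2}$ through the compositional inverses $\chi_{\mu_i}$ of $\Psi_{\mu_i}$. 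Rewriting this in terms of $S_{\mu_i}(z)=\chi_{\mu_i}(z)(1+z)/z$ gives $S_{\mu_1 \boxtimes \mu_2} = S_{\mu_1} S_{\mu_2}$ as a power-series identity near $z=0^-$, and Proposition \ref{PisiForPos} guarantees analyticity of each $S_{\mu_i}$ on an interval $(-\epsilon_i,0)$, so the identity extends analytically to the intersection. Non-triviality of $\mu_1 \boxtimes \mu_2$ follows from $\varphi(X_1 X_2)=\varphi(X_1)\varphi(X_2)>0$.

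Second, to extend to unbounded $\mu_1, \mu_2 \in \mathcal{M}^+$, I would approximate by the rescaled truncations $\mu_i^{(R)} := \mu_i|_{[0,R]} / \mu_i([0,R])$, which have compact support and converge weakly to $\mu_i$ as $R \to \infty$. Using weak continuity of $\boxtimes$ on $\mathcal{M}^+\times\mathcal{M}^+$ together with continuity of $\Psi_\mu$ and of $\chi_\mu$ on compact subsets of $i\mathbb{C}_+$ lying in the common domain (which Proposition \ref{PisiForPos} controls through $\mu(\{0\})$), one passes to the limit in the compact-support identity and obtains the factorization for unbounded positive measures. For symmetric $\mu_2$, following \cite{APA09}, one introduces the positive push-forward $\mu_2^2$ under $x \mapsto x^2$ and identifies the distribution of $(X_1^{1/2} X_2 X_1^{1/2})^2$ with a free multiplicative convolution involving $\mu_1$ and $\mu_2^2$; the $S$-transform of a symmetric measure is then defined as an appropriate branch of the square root of $S_{\mu_2^2}$, so that the multiplicative identity on a common real subinterval transfers from the positive case.

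Finally, for the atom formula, Proposition \ref{PisiForPos} says that $\chi_\mu$—and hence $S_\mu$—is analytic and non-vanishing on the maximal real interval $(\mu(\{0\})-1,0)$. The factorization $S_{\mu_1 \boxtimes \mu_2} = S_{\mu_1} S_{\mu_2}$ therefore holds on the intersection $(\max\{\mu_1(\{0\}),\mu_2(\{0\})\}-1,0)$, and one verifies that this interval is maximal for $S_{\mu_1 \boxtimes \mu_2}$: the factors extend continuously and non-vanishingly up to the left endpoint determined by the larger atom, while at least one of them fails to extend past it. By Proposition \ref{PisiForPos} applied now to $\mu_1 \boxtimes \mu_2$, this maximal interval equals $((\mu_1 \boxtimes \mu_2)(\{0\})-1,0)$, which gives the claimed identity. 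The principal obstacle in the overall argument is the symmetric case: the $\Psi$-transform of a symmetric measure is not univalent on $i\mathbb{C}_+$ in the simple manner of Proposition \ref{PisiForPos}, and one must consistently track branches of square roots when defining $S_{\mu_2}$ so that the multiplicative identity persists on a common real subinterval—this is the delicate analytic step accomplished in \cite{APA09}.
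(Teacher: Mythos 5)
The paper offers no proof of Proposition \ref{StransfPos}: it is quoted as a known result, with the four relevant references (Voiculescu for compactly supported positive measures, Bercovici--Voiculescu for the unbounded positive case and the atom formula, Raj Rao--Speicher and Arizmendi--P\'erez-Abreu for the compactly supported and symmetric extensions). Your three-stage plan (compact positive $\to$ unbounded positive by truncation $\to$ symmetric via the square map and a branch of $\sqrt{S_{\mu_2^2}}$) correctly reconstructs the architecture of those references, and the delicate points you flag --- weak continuity of $\boxtimes$ away from $\delta_0$, and the branch-tracking for symmetric measures --- are indeed where the real work lies. The combinatorial derivation of $S_{\mu_1\boxtimes\mu_2}=S_{\mu_1}S_{\mu_2}$ in the compact case is only gestured at (``alternating colors yields a functional factorization'' is the statement to be proved, not an argument), but as a proposal deferring to \cite{Vo87b} this is acceptable.

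There is, however, a genuine gap in your derivation of the atom formula. From the identity $S_{\mu_1\boxtimes\mu_2}=S_{\mu_1}S_{\mu_2}$ on the \emph{intersection} $\bigl(\max\{\mu_1(\{0\}),\mu_2(\{0\})\}-1,\,0\bigr)$ of the two natural domains, you cannot conclude that this interval is the \emph{maximal} domain of $S_{\mu_1\boxtimes\mu_2}$: a product of analytic functions may extend analytically past the boundary of the region where both factors are separately defined, and your assertion that ``at least one of them fails to extend past it'' is exactly what needs proof. The standard argument is different and two-sided: the inequality $(\mu_1\boxtimes\mu_2)(\{0\})\ge\max\{\mu_1(\{0\}),\mu_2(\{0\})\}$ comes from operator-theoretic kernel considerations ($\ker(X_1^{1/2}X_2X_1^{1/2})\supseteq\ker X_1$, plus traciality to exchange the roles of $X_1$ and $X_2$), while the reverse inequality requires a separate analysis of $\Psi_{\mu_1\boxtimes\mu_2}$ near $0$ (or subordination), as in \cite{BeVo93}. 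As written, your final paragraph establishes neither inequality.
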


Using this $S$-transform it was proved in \cite{APA09} that, for a $\mu\in\mathcal{M}^+$ and $\nu$ a symmetric probability measure, the following relation holds: 

\begin{equation}\label{square product}
(\mu\boxtimes\nu)^2=\mu\boxtimes\mu\boxtimes\nu^2
\end{equation} 
where, for a measure $\mu$, we denote by $\mu^2$ the measure induced by the push-forward $t\rightarrow t^2$.

\subsection {Free infinite divisibility}

\begin{definition}
Let $\mu$ be a probability measure in $\mathbb{R}$. We say that $\mu$ is \textbf{freely (or $\boxplus$- for short) infinitely divisible}, if for all $n $, there exists a probability measure $\mu _{n}$ such that 
\begin{equation}
\mu =\underset{n\text{ times}}{\underbrace{\mu _{n}\boxplus \mu _{n}\boxplus....\boxplus \mu _{n}}}\text{.}  \label{InfDivLib}
\end{equation}%
We denote by $I^\boxplus $ the class of such measures. 
\end{definition}
For $\mu \in I^\boxplus$, a free  convolution semigroup $(\mu^{\boxplus t})_{t\geq 0}$ can always be defined so that 
$\mathcal{C}^\boxplus_{\mu^{\boxplus t}}(z) = t \mathcal{C}^\boxplus_\mu(z)$. 

Now, recall that a probability measure $\mu$ is classically infinitely divisible if and only if its classical cumulant transform $\mathcal{C}_\mu^\ast(u) :=\log\left(\int_{\mathbb{R}}e^{iux}\mu(dx)\right)$ has the L\'{e}vy-Khintchine representation
\begin{equation}
\mathcal{C}_\mu^\ast(u)=i\eta u-\frac{1}{2}au^{2}+\int_{
\mathbb{R}}(e^{iut}-1-iut1_{\left[ -1,1\right] }\left( t\right) )\nu \left( dt\right),
\text{ \ \ }u\in 
\mathbb{R},  \label{levykintchine clasica}
\end{equation}
where $\eta \in \mathbb{R},$ $a\geq 0$ and $\nu $ is a L\'{e}vy measure on $\mathbb{R}$, that is,
$\int_{\mathbb{R}}\min (1,t^{2})\nu (dt)<\infty $ and $\nu (\{0\})=0$. If this representation exists, the
triplet $(\eta ,a,\nu )$ is unique and is called the classical characteristic triplet of $\mu $.

A $\boxplus$-infinitely divisible measure has a free analogue of the L\'{e}vy-Khintchine representation (see \cite{BNT06}). 

\begin{proposition}
\label{Cum-Ole}A probability measure $\mu $ on $%
\mathbb{R}$ is $\boxplus $-infinitely divisible if and only if there are $\eta \in 
\mathbb{R},$ $a\geq 0$ and a L\'{e}vy measure $\nu$ on $\mathbb{R}$ such that 
\begin{equation}
\mathcal{C}^\boxplus_{\mu }\mathcal{(}z)=\eta z+az^{2}+\int_{\mathbb{R}}\left( \frac{1}{1-zt}-1-tz1_{\left[ -1,1\right] }\left( t\right) \right) \nu \left(
dt\right) ,\quad z\in \mathbb{C}_{-}\text{.}  \label{levykintchine libre}
\end{equation}
The triplet $(\eta ,a,\nu )$ is unique and is called the free characteristic triplet of $\mu .$
\end{proposition}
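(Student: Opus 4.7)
The plan is to use the Nevanlinna--Pick representation of analytic self-maps of the half-plane, mediated by the Bercovici--Voiculescu characterization of $\boxplus$-infinite divisibility: $\mu\in I^{\boxplus}$ if and only if $\phi_{\mu}$ extends analytically from its initial domain $\Gamma_{\alpha,M}$ to all of $\mathbb{C}^{+}$ with $\phi_{\mu}(\mathbb{C}^{+})\subset\overline{\mathbb{C}^{-}}$. Once this extension is available, the L\'evy--Khintchine form will follow from Nevanlinna's theorem applied to the Pick function $-\phi_{\mu}$, together with the substitution $w=1/z$ and a rearrangement of the integrand.

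For the forward direction, suppose $\mu\in I^{\boxplus}$ and for each $n$ let $\mu_{n}:=\mu^{\boxplus 1/n}$, so $\phi_{\mu}=n\phi_{\mu_{n}}$ on the common domain. Standard Bercovici--Voiculescu estimates give that $\mu_{n}\to\delta_{0}$ weakly, that the angles $\alpha_{n}$ of the domains $\Gamma_{\alpha_{n},M_{n}}$ of $\phi_{\mu_{n}}$ tend to infinity, and that $\{n\phi_{\mu_{n}}\}$ is a normal family on $\mathbb{C}^{+}$; the identity principle glues the limits to a single analytic extension of $\phi_{\mu}$ to $\mathbb{C}^{+}$ with values in $\overline{\mathbb{C}^{-}}$. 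The asymptotic $F_{\mu}(iy)/(iy)\to 1$ as $y\to\infty$ yields $\phi_{\mu}(iy)/(iy)\to 0$, which suppresses a linear term below. Applying Nevanlinna's theorem to $-\phi_{\mu}$ produces
\[
\phi_{\mu}(w)=-\alpha+\int_{\mathbb{R}}\frac{1+xw}{w-x}\,\rho(dx),
\]
for a unique $\alpha\in\mathbb{R}$ and finite positive Borel measure $\rho$ on $\mathbb{R}$.

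Next, substituting $w=1/z$ and multiplying by $z$ converts $\phi_{\mu}$ to $\mathcal{C}^{\boxplus}_{\mu}$ and the integrand to $z(z+x)/(1-xz)$. The atom $\rho(\{0\})$ contributes the semicircular term $az^{2}$ with $a=\rho(\{0\})$, while on $\mathbb{R}\setminus\{0\}$ I use the algebraic identity
\[
\frac{z(z+x)}{1-xz}=\frac{1+x^{2}}{x^{2}}\left(\frac{1}{1-xz}-1-xz\,\mathbf{1}_{[-1,1]}(x)\right)+r(x)\,z,
\]
with $r(x):=((1+x^{2})\mathbf{1}_{[-1,1]}(x)-1)/x$ bounded on $\mathbb{R}\setminus\{0\}$, together with the change of measure $\nu(dx):=\tfrac{1+x^{2}}{x^{2}}\rho(dx)\bigl|_{\mathbb{R}\setminus\{0\}}$. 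One checks $\int\min(1,x^{2})\,\nu(dx)<\infty$, and the residual terms linear in $z$ combine with $-\alpha z$ into a single real coefficient $\eta$, producing the displayed L\'evy--Khintchine form.

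For the converse, given a triplet $(\eta,a,\nu)$, reversing the above manipulations defines a function $\phi$ on $\mathbb{C}^{+}$ which is by construction Pick with the required behaviour at infinity and is therefore the Voiculescu transform of a probability measure $\mu$; since the scaled triplet $(t\eta,ta,t\nu)$ yields a similar Pick function for every $t>0$, the measures $\mu^{\boxplus t}$ all exist and in particular $\mu^{\boxplus 1/n}$ witnesses $\mu\in I^{\boxplus}$. Uniqueness of $(\eta,a,\nu)$ reduces to uniqueness in the Nevanlinna representation for $-\phi_{\mu}$ and the canonical separation of the atom at $0$. The main obstacle is the forward analytic-continuation step: starting from the a priori small initial domains $\Gamma_{\alpha_{n},M_{n}}$ of each $\phi_{\mu_{n}}$, one must ensure that the products $n\phi_{\mu_{n}}$ stabilise to a common analytic function on all of $\mathbb{C}^{+}$, which is the technical core of the Bercovici--Voiculescu theorem and rests on uniform Schwarz--Pick type estimates for reciprocal Cauchy transforms near infinity.
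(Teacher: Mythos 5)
The paper offers no proof of this proposition: it is quoted as a known result with a pointer to \cite{BNT06} (which in turn rests on \cite{BeVo93}), so there is nothing in the paper itself to compare against. Your outline reproduces, correctly, the standard argument from those references: the equivalence of $\boxplus$-infinite divisibility with the existence of an analytic extension of $\phi_{\mu}$ to $\mathbb{C}_{+}$ taking values in $\overline{\mathbb{C}_{-}}$ is the Bercovici--Voiculescu theorem; Nevanlinna's representation (with the linear term killed by $\phi_{\mu}(iy)/iy\to 0$) gives exactly the $(\gamma_{\mu},\tau_{\mu})$-form of $\phi_{\mu}$ that the paper records at the end of Section 3.2; and the passage to the stated form of $\mathcal{C}^{\boxplus}_{\mu}$ is the reparametrization you carry out. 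I checked your key identity: for $|x|\leq 1$ it reduces to $\frac{(1+x^{2})z^{2}}{1-xz}+xz=\frac{z^{2}+xz}{1-xz}$ and for $|x|>1$ to $\frac{(1+x^{2})z}{x(1-xz)}-\frac{z}{x}=\frac{z^{2}+xz}{1-xz}$, both of which hold; moreover $|r(x)|\leq 1$ and $\min(1,x^{2})\frac{1+x^{2}}{x^{2}}\leq 2$, so the residual linear term is integrable against the finite measure $\rho$ and $\nu$ is indeed a L\'evy measure, with $a=\rho(\{0\})$. The only part that is a sketch rather than a proof is the one you flag yourself: the analytic continuation of $\phi_{\mu}$ from the truncated cones $\Gamma_{\alpha_{n},M_{n}}$ to all of $\mathbb{C}_{+}$, and the converse construction of $\mu$ from a given Pick function with $\phi(iy)=o(y)$. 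As written, the assertion that $\{n\phi_{\mu_{n}}\}$ is a normal family on $\mathbb{C}_{+}$ presupposes the extension you are trying to establish, so that step should be cited to Bercovici--Voiculescu rather than presented as routine; with that attribution the proposal is a correct account of the standard route, which is also the route taken by the paper's references.
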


The expressions (\ref{levykintchine clasica}) and (\ref{levykintchine libre}) give a natural bijection between 
$I^\ast $ and $I^\boxplus$. This bijection was introduced by Bercovici and Pata
 in \cite{Be-Pa99} in their studies of domains of attraction in free probability. Explicitly, this bijection is given as follows.

\begin{definition}
By the \textbf {Bercovici-Pata bijection} we mean the mapping $\Lambda :I^\ast \rightarrow I^\boxplus$ that sends the measure $\mu $
in $I^\ast $ with classical characteristic triplet $(\eta ,a,\nu )$
to the measure $\Lambda (\mu )$ in $I^\boxplus$ 
with free characteristic triplet $(\eta ,a,\nu )$.
\end{definition}

The map $\Lambda (\mu )$ is both a homomorphism in the sense that $\Lambda (\mu\ast\nu)=\Lambda (\mu)\boxplus\Lambda (\nu)$, and a homeomorphism with respect to weak convergence.

Another type of L\'evy-Khintchine representation in terms of $\phi_{\mu}$ is sometimes more useful than the free cumulant case: for $\mu \in I^{\boxplus}$, there exists a unique $\gamma_{\mu}\in\real$ and a finite non-negative measure $\tau_{\mu}$ on $\real$ such that
	$$
	\phi_{\mu}(z) = \gamma_{\mu}
	+\int_{\real} \frac{1+xz}{z-x}\tau_{\mu}(dx).
	$$

Finally let us mention very well known $\boxplus$-infinitely divisible measures that we will use often in this paper. The first one is the standard Wigner semicircle law $w$ with density 
\begin{equation*}
\frac{1}{2\pi }(4-x^2)^{1/2}\mathrm{d}x,\quad -2<x<2. 
\end{equation*}
The second is the Marchenko-Pastur law $m$, also known as free Poisson, with density 
\begin{equation*}
\frac{1}{2\pi }x^{-1/2}(4-x)^{1/2}\mathrm{d}x,\quad 0<x<4.
\end{equation*}

\subsection{Boolean convolutions}
The additive boolean convolution  $\mu \uplus \nu$ of probability measures on $\mathbb{R}$ was introduced in \cite{S-W}. It is characterized by 
$K_{\mu \uplus \nu} (z)=K_\mu(z)+K_\nu(z)$, where $K_\mu(z)$ is the energy function~\cite{S-W} defined by 
\[
K_\mu(z)=z-F_\mu(z).  
\]
Any probability measure is infinitely divisible with respect to the boolean convolution 
and a kind of L\'{e}vy-Khintchine representation is written as~\cite{S-W}
\[
K_\mu(z) = \gamma_\mu +\int_{\real}\frac{1+xz}{z-x}\eta_\mu(dx),  
\]
where $\gamma_\mu \in \mathbb{R}$ and $\eta_\mu$ is a finite non-negative measure. A boolean convolution semigroup $(\mu^{\uplus t})_{t\geq 0}$ can always be defined for any probability measure $\mu \in \mathcal{M}$. Moreover, if $\mu\in\mathcal{M}^+$ then  $\mu^{\uplus t}\in\mathcal{M}^+$  for all $t>0$.
 The Bercovici-Pata bijection $\mathbb{B}$ from the boolean convolution to the free one can be defined in the same way as for $\Lambda$, by the relation $K_\mu=\phi_{\mathbb{B}(\mu)}$. The reader is referred to \cite{Be-Pa99} for the definition of $\mathbb{B}$ in terms of domains of attraction.  

Similarly to $\Lambda$, $\mathbb{B}$ is a homomorphism between $(\mathcal{M},\uplus)$ and $(I^\boxplus,\boxplus)$, in the sense that $\mathbb{B} (\mu\uplus\nu)=\mathbb{B}(\mu)\boxplus\mathbb{B}(\nu)$. Also, $\mathbb{B}$ is a homeomorphism with respect to weak convergence.

\section{Free regular measures}
Let us consider a probability measure $\sigma \in I^\boxplus$ whose L\'evy measure $\nu$ of (\ref{levykintchine libre}) satisfies $\int_{\mathbb{R}_+}\min(1,t)\nu(dt) < \infty$. Then the L\'evy-Khintchine representation reduces to 
\begin{equation}\label{eq00}
\calC^{\boxplus}_{\sigma }(z)=\eta' z+\int_{\mathbb{R}}\left( \frac{1%
}{1-zt}-1\right) \nu \left( dt\right) ,\quad z\in \mathbb{C}_{-},
\end{equation}
where $\eta' \in \real$. The measure $\sigma$ is said to be a \textbf{free regular infinitely
divisible (or free regular, for short) distribution} if $\eta' \geq 0$ and $\nu\left((-\infty,0]\right)=0$. 
The most typical example is some compound free Poisson distributions.  
If the drift term $\eta'$ is zero and the L\'evy measure $\nu$ is $\lambda \rho$ for some $\lambda>0$ and a probability measure $\rho$ on $\real$, then we call $\sigma$ a {\bf compound free Poisson distribution} with rate $\lambda$ and jump distribution $\rho$. To clarify these parameters, we denote $\sigma = \pi(\lambda,\rho)$. 
\begin{remark} \label{rem free poisson}
1) The Marchenko-Pastur law $m$ is a compound free Poisson with rate $1$ and jump
distribution $\delta _{1}$. 

2) The compound free Poisson $\pi (1,\nu)$ coincides with the free multiplication $m \boxtimes\nu$. 
\end{remark}
 
This section is devoted to clarify several characterizations of free regular measures, some of which can be inferred from results of \cite{BG08,Has10,PeSa} and \cite{Sa10}, as we recollect in the following theorem. The final characterization uses free L\'evy processes which we will describe in details. 
\begin{theorem}\label{charac}

The following conditions for $\mu \in \mathcal{M} $ are equivalent:
\begin{enumerate}[{\rm (i)}]

\item $\mu$ is free regular. 

\item $\mu \in \Lambda(\mathcal{M}^+\cap I^\ast)$. 

\item $\mu \in \mathbb{B}(\mathcal{M}^+)$. 

\item $\mu ^{\boxplus t}\in \mathcal{M}^{+}$ for any $t >0$. 

\item $\mu$ is $\boxplus$-infinitely divisible, $\tau _{\mu }(-\infty ,0)=0$ and $\phi_\mu (-0) \geq 0$, where $\tau_\mu$ is the measure appearing in the representation of the Voiculescu transform. 

\item There exists a free subordinator $X_t$ such that $X_1$ is distributed as $\mu$.
\end{enumerate}
\end{theorem}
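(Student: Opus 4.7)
The plan is to chain the six conditions through the two Bercovici--Pata bijections $\Lambda$ and $\mathbb{B}$ combined with the two L\'evy--Khintchine representations available for $\boxplus$-infinitely divisible measures.

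First, (i) $\Leftrightarrow$ (ii) follows by reading Sato's classical characterization (Theorem 24.11 of \cite{Sato}) of subordinator marginals through the definition of $\Lambda$: a classical ID law lies in $\mathcal{M}^+\cap I^\ast$ exactly when its triplet satisfies $a=0$, $\mathrm{supp}(\nu)\subset(0,\infty)$ with $\int(1\wedge x)\,\nu(\mathrm{d}x)<\infty$, and $\eta-\int_{(0,1]}x\,\nu(\mathrm{d}x)\geq 0$; because $\Lambda$ preserves the triplet, regrouping the $1_{[-1,1]}$ truncation in (\ref{levykintchine libre}) turns exactly these conditions into the free regular form (\ref{eq00}) with $\eta'\geq 0$. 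The equivalence (i) $\Leftrightarrow$ (v) is then a comparison of (\ref{eq00}) with the $\phi_\mu$-representation: the standard change of variables $\tau_\mu=a\delta_0+\frac{x^2}{1+x^2}\nu$ identifies $\mathrm{supp}(\tau_\mu)\subset[0,\infty)$ with $a=0$ and $\mathrm{supp}(\nu)\subset[0,\infty)$, while evaluating $\phi_\mu$ on the real axis as $z\uparrow 0$ recovers the drift, so $\phi_\mu(-0)\geq 0$ is equivalent to $\eta'\geq 0$.

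For (ii) $\Leftrightarrow$ (iv), the easy direction uses that $\Lambda$ is a homomorphism between convolution semigroups, so $\mu=\Lambda(\tilde\mu)$ yields $\mu^{\boxplus t}=\Lambda(\tilde\mu^{\ast t})$; approximating $\tilde\mu^{\ast t}$ by classical compound Poissons and using weak continuity of $\Lambda$ together with Remark~\ref{rem free poisson} presents $\mu^{\boxplus t}$ as a weak limit of compound free Poissons whose jump measures are supported on $[0,\infty)$, hence $\mu^{\boxplus t}\in\mathcal{M}^+$. Conversely, if $\mu^{\boxplus t}\in\mathcal{M}^+$ for every $t>0$, then $\mu\in I^\boxplus$; letting $t\downarrow 0$ in the compound free Poisson approximation of $\mu^{\boxplus t}$ forces the free characteristic triplet of $\mu$ to satisfy $a=0$, $\mathrm{supp}(\nu)\subset[0,\infty)$ and $\eta'\geq 0$, since otherwise the support of $\mu^{\boxplus t}$ for small $t$ would charge the negative half-line. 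The equivalence (iv) $\Leftrightarrow$ (vi) is then built into the definition of a free L\'evy process, as the increment distributions are precisely $\mu^{\boxplus(t-s)}$.

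For (i) $\Leftrightarrow$ (iii), I use $\phi_{\mathbb{B}(\nu)}=K_\nu$ together with the boolean L\'evy--Khintchine representation. When $\nu\in\mathcal{M}^+$, the reciprocal Cauchy transform $F_\nu$ extends analytically through $(-\infty,0)$ and takes positive values there, so $K_\nu(z)=z-F_\nu(z)$ is holomorphic on $\mathbb{C}\setminus[0,\infty)$; this forces the Nevanlinna measure of $K_\nu$ to be supported on $[0,\infty)$ and $K_\nu(-0)\geq 0$, which is condition (v) applied to $\mathbb{B}(\nu)$, hence (i). The reverse direction runs the same analytic-extension argument backwards. I expect the main technical hurdle to be the careful justification of (iv) $\Rightarrow$ (i): ruling out any Gaussian component or negative L\'evy mass in the free triplet of $\mu$ purely from the hypothesis that the free convolution semigroup stays in $\mathcal{M}^+$ requires controlling the limit $t\downarrow 0$, and this is exactly where the compound free Poisson approximation and the weak continuity of $\Lambda$ must be deployed with some care.
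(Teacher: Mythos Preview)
Your chain of equivalences is organized the same way as the paper's: (i)$\Leftrightarrow$(ii) via the classical subordinator criterion read through $\Lambda$, (i)$\Leftrightarrow$(iii) and (i)$\Leftrightarrow$(v) through the identity $K_{\mathbb{B}^{-1}(\mu)}=\phi_\mu$, (i)$\Leftrightarrow$(iv) as a separate lemma, and (iv)$\Leftrightarrow$(vi) straight from the definition of a free L\'evy process. The paper is terser because it outsources two links: (i)$\Leftrightarrow$(iv) is quoted from Benaych-Georges~\cite{BG08}, and (i)$\Leftrightarrow$(iii), (i)$\Leftrightarrow$(v) are both reduced to Proposition~2.5 of \cite{Has10} rather than argued by direct triplet comparison and analytic continuation as you do. Your more explicit route is a reasonable alternative and has the advantage of being self-contained.

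There is, however, a genuine slip in your (i)$\Leftrightarrow$(v) paragraph. You claim that $\tau_\mu=a\delta_0+\tfrac{x^2}{1+x^2}\nu$ ``identifies $\mathrm{supp}(\tau_\mu)\subset[0,\infty)$ with $a=0$ and $\mathrm{supp}(\nu)\subset[0,\infty)$''. This is false: the atom $a\delta_0$ sits at the origin, so $\tau_\mu\bigl((-\infty,0)\bigr)=0$ is compatible with any $a\geq 0$ and only gives $\mathrm{supp}(\nu)\subset[0,\infty)$. What actually kills the semicircular component is the \emph{other} condition in (v): an atom of mass $a>0$ at $0$ contributes $a/z$ to $\phi_\mu(z)$, forcing $\phi_\mu(-0)=-\infty$. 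The same finiteness of $\phi_\mu(-0)$ is what guarantees $\int_{(0,1]}x\,\nu(\mathrm{d}x)<\infty$, a hypothesis you need for the reduced representation~(\ref{eq00}) but never extract. So the two conditions in (v) cannot be decoupled into ``support condition $\leftrightarrow$ $(a,\nu)$'' and ``limit condition $\leftrightarrow$ $\eta'$'' as you present it; once you let $\phi_\mu(-0)\geq 0$ do this extra work, the equivalence is correct.

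On (iv)$\Rightarrow$(i), which you flag as the hard step: the paper does not prove it either and simply invokes \cite{BG08,Sa10}. Your compound-Poisson heuristic (``otherwise the support of $\mu^{\boxplus t}$ for small $t$ would charge the negative half-line'') is the right intuition but is not a proof; making it rigorous requires a quantitative statement about the left edge of $\mu^{\boxplus t}$ in terms of the free triplet, which is essentially the content of the cited references.
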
 
\subsection{Characterizations (ii)--(v)}

The equivalence between (i) and (ii) is clear from the L\'{e}vy-Khintchine representation. However, we remark again that not all non-negative $\boxplus$-infinitely divisible distributions are free regular; a typical example of a measure in $I^\boxplus \cap \mathcal{M}^{+}$ but not in $I_{r+}^{\boxplus }$ is $w_{+}$, a semicircle distribution with mean $2$ and variance $1$. 

In a similar fashion, one can prove the equivalence between (i) and (iii). This can be seen from the boolean L\'evy-Khintchine representation of $\mu \in \mathcal{M}^+$ in terms of $K_\mu$, see Proposition 2.5 of \cite{Has10} for the details. 

The equivalence between (i) and (iv) was proved by Benaych-Georges~\cite{BG08} as the following lemma, see also Sakuma~\cite{Sa10}. 
\begin{lemma}
\label{characterization regular}A probability measure $\mu $ is in $I_{r+}^{\boxplus }$, if and only if $\mu ^{\boxplus t}\in \mathcal{M}^{+}$ for all $t>0.$
\end{lemma}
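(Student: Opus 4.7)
The forward direction is immediate: scaling the regular L\'evy-Khintchine representation of $\mu$ by $t$ gives another regular representation for $\mu^{\boxplus t}$ (drift $t\eta' \geq 0$ and L\'evy measure $t\nu$ supported on $(0,\infty)$), so $\mu^{\boxplus t} \in I_{r+}^{\boxplus} \subseteq \mathcal{M}^+$.

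For the converse, assume $\mu^{\boxplus t} \in \mathcal{M}^+$ for every $t > 0$, and exclude the trivial case $\mu = \delta_0$. Taking $t = 1/n$ shows $\mu \in I^{\boxplus}$, so $\mu$ has a free L\'evy-Khintchine triplet $(\eta, a, \nu)$ and a Nevanlinna-Pick representation $\phi_\mu(z) = \gamma_\mu + \int \frac{1+xz}{z-x}\, \tau_\mu(dx)$. The key step is to exploit the positivity of each $\mu^{\boxplus t}$ to extend $\phi_\mu$ to the negative real axis. Fix $t > 0$: since $\mu^{\boxplus t} \in \mathcal{M}^+$, the reciprocal Cauchy transform $F_{\mu^{\boxplus t}}$ extends to a strictly increasing real-analytic map on $(-\infty, 0)$ with image $(-\infty, c_t)$, where $c_t := F_{\mu^{\boxplus t}}(0^-) \leq 0$, and satisfies $F_{\mu^{\boxplus t}}(x) < x$ (from $G_{\mu^{\boxplus t}}(x) > 1/x$ for $x < 0$). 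Inverting and using $\phi_{\mu^{\boxplus t}} = t\phi_\mu$, for $w = F_{\mu^{\boxplus t}}(y)$ with $y < 0$ I would obtain $\phi_\mu(w) = (y-w)/t > 0$. Since $\mu^{\boxplus t} \to \delta_0$ weakly as $t \to 0^+$, we have $c_t \to 0^-$, so the union of these images exhausts $(-\infty, 0)$; hence $\phi_\mu$ is real-analytic and strictly positive throughout $(-\infty, 0)$.

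I would then translate these properties into the regularity of the triplet. Real-analyticity of $\phi_\mu$ on $(-\infty, 0)$ forces $\tau_\mu((-\infty, 0)) = 0$ via the Nevanlinna-Pick representation; combined with the standard identification $\tau_\mu = a\delta_0 + \frac{x^2}{1+x^2}\nu$, this yields that $\nu$ is supported on $[0, \infty)$. To extract $a = 0$, the integrability $\int_0^1 x\,\nu(dx) < \infty$, and $\eta' \geq 0$, I analyze $\phi_\mu(w)$ as $w \uparrow 0^-$ using
\[
\phi_\mu(w) = \Bigl[\eta + \int_{x > 1} x\,\nu(dx)\Bigr] + \frac{a}{w} + \int_0^1 \frac{x^2}{w-x}\,\nu(dx) + \int_1^\infty \frac{wx}{w-x}\,\nu(dx).
\]
Monotone convergence on each piece shows that if either $a > 0$ or $\int_0^1 x\,\nu(dx) = \infty$, then $\phi_\mu(w) \to -\infty$, contradicting the strict positivity obtained above. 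In the remaining case, $\phi_\mu(0^-) = \eta - \int_0^1 x\,\nu(dx) = \eta' \geq 0$, establishing the regular L\'evy-Khintchine form, so $\mu \in I_{r+}^{\boxplus}$. The main obstacle is precisely this boundary analysis at $w = 0^-$, which forces the three regularity conditions to hold simultaneously; alternatively, once $\phi_\mu$ is shown to be real-analytic and positive on $(-\infty, 0)$, one could invoke characterization (v), which directly packages $\tau_\mu((-\infty, 0)) = 0$ and $\phi_\mu(0^-) \geq 0$ as equivalent to free regularity.
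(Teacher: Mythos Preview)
The paper does not supply its own proof of this lemma; it merely attributes the result to Benaych-Georges and Sakuma and quotes it. Your argument is therefore a genuine self-contained proof rather than a reproduction of the paper's reasoning, and it is essentially correct, with two minor points to tidy up. First, your displayed expression for $\phi_\mu(w)$ contains an extraneous constant $\int_{x>1}x\,\nu(dx)$, which need not even be finite for a general L\'evy measure; the correct identity (from $\phi_\mu(w)=w\,\mathcal{C}^\boxplus_\mu(1/w)$, once $\nu$ is known to sit on $[0,\infty)$) is
\[
\phi_\mu(w)=\eta+\frac{a}{w}+\int_{(0,1]}\frac{x^2}{w-x}\,\nu(dx)+\int_{(1,\infty)}\frac{wx}{w-x}\,\nu(dx),
\]
and your monotone/dominated convergence analysis goes through unchanged on this version. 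Second, the step ``$c_t\to 0^-$ since $\mu^{\boxplus t}\to\delta_0$ weakly'' needs one more line: if $\mu^{\boxplus t}(\{0\})>0$ then $c_t=0$ outright, while otherwise $-G_{\mu^{\boxplus t}}(0^-)=\int_{(0,\infty)}x^{-1}\,\mu^{\boxplus t}(dx)\ge\int\min(x^{-1},M)\,d\mu^{\boxplus t}\to M$ for every $M>0$, so $c_t=1/G_{\mu^{\boxplus t}}(0^-)\to 0$.

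As you observe at the end, your route is really a direct proof of the implication (iv)$\Rightarrow$(v) in Theorem~\ref{charac}: positivity of each $\mu^{\boxplus t}$ forces $\phi_\mu$ to continue analytically and stay nonnegative on $(-\infty,0)$, giving $\tau_\mu((-\infty,0))=0$ and $\phi_\mu(0^-)\ge 0$. The paper's own treatment of (v) in Proposition~\ref{prop345} reaches the same two conditions by a different mechanism, namely the boolean-to-free bijection $\mathbb{B}$, reading them off the boolean L\'evy--Khintchine data of $\mathbb{B}^{-1}(\mu)\in\mathcal{M}^+$. Your approach is more elementary in that it avoids $\mathbb{B}$ altogether; the paper's is shorter once that machinery is in hand.
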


The equivalence between (i) and (v) is proved as follows. For a measure $\nu$ we denote by $a(\nu)$ the \textit{left extremity} of $\nu$: $a(\nu)=\min \{x: x \in \text{supp } \nu \}$.
\begin{proposition}\label{prop345} Let $\mu$ be a $\boxplus$-infinitely divisible distribution. Then $\mu$ is free regular if and only if $a(\tau_\mu) \geq 0$ and $\phi_\mu(-0) \geq 0$.  
\end{proposition}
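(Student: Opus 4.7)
The plan is to pass between two integral representations of $\phi_\mu$. Any $\boxplus$-infinitely divisible $\mu$ admits the Nevanlinna-type form
$$
\phi_\mu(w)=\gamma_\mu+\int_{\real}\frac{1+xw}{w-x}\,\tau_\mu(dx),
$$
while free regularity is equivalent, upon substituting $z=1/w$ in (\ref{eq00}) and multiplying by $w$, to
$$
\phi_\mu(w)=\eta'+\int_{(0,\infty)}\frac{tw}{w-t}\,\nu(dt),\qquad \eta'\geq 0,\ \nu\subset(0,\infty),\ \int\min(1,t)\,\nu(dt)<\infty.
$$
The algebraic link between the two forms is the change of measure $\nu(dt)=\tfrac{1+t^2}{t^2}\tau(dt)$ on $(0,\infty)$, together with the identities $\tfrac{w}{t(w-t)}=\tfrac{1}{t}+\tfrac{1}{w-t}$ and $\tfrac{1+t^2}{w-t}=\tfrac{1+tw}{w-t}-t$.

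For the forward direction, starting from the free regular form, I would set $\tau(dt):=\tfrac{t^2}{1+t^2}\nu(dt)$ on $(0,\infty)$ (finite by the L\'evy condition) and compute
$$
\int\frac{tw}{w-t}\,\nu(dt)=\int\frac{1}{t}\,\tau(dt)+\int\frac{1+tw}{w-t}\,\tau(dt),
$$
where $\int\tfrac{1}{t}\tau=\int\tfrac{t}{1+t^2}\nu<\infty$ because $\int_{(0,1)}t\,\nu<\infty$. Uniqueness of the Nevanlinna representation identifies $\tau_\mu=\tau$, which is supported in $(0,\infty)$, hence $a(\tau_\mu)\geq 0$, and $\gamma_\mu=\eta'+\int\tfrac{1}{t}\tau$. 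Bounding the integrand at $w=-\epsilon$ by $\min(t,1)$ and applying dominated convergence gives $\phi_\mu(-0)=\eta'\geq 0$.

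For the converse, assume $\tau_\mu\subset[0,\infty)$ and $\phi_\mu(-0)\geq 0$ (hence finite). Since $\tau_\mu$ is supported away from the negative axis, $\phi_\mu$ extends analytically to $(-\infty,0)$, and
$$
\phi_\mu(-\epsilon)=\gamma_\mu-\int_{[0,\infty)}\frac{1-x\epsilon}{\epsilon+x}\,\tau_\mu(dx).
$$
A potential atom at the origin contributes $-\tau_\mu(\{0\})/\epsilon\to -\infty$, so $\tau_\mu(\{0\})=0$. Splitting $\tfrac{1-x\epsilon}{\epsilon+x}=\tfrac{1}{\epsilon+x}-\tfrac{x\epsilon}{\epsilon+x}$ and applying monotone convergence to the first piece and the crude bound $\epsilon\,\tau_\mu(\real)$ to the second, I get $\phi_\mu(-0)=\gamma_\mu-\int x^{-1}\tau_\mu(dx)$. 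Finiteness of $\phi_\mu(-0)$ now forces $\int x^{-1}\tau_\mu<\infty$. Defining $\nu(dt):=\tfrac{1+t^2}{t^2}\tau_\mu(dt)$ on $(0,\infty)$ gives a L\'evy measure satisfying $\int_{(0,1)}t\,\nu\leq 2\int_{(0,1)}x^{-1}\tau_\mu<\infty$ and $\int_{(1,\infty)}\nu\leq 2\tau_\mu((1,\infty))<\infty$, hence $\int\min(1,t)\,\nu<\infty$. Reversing the algebraic manipulation of the forward direction casts $\phi_\mu$ into the free regular form with drift $\eta'=\phi_\mu(-0)\geq 0$, so $\mu$ is free regular.

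The main technical obstacle is the control of $\phi_\mu(-0)$ from the Nevanlinna integral: the integrand $\tfrac{1-x\epsilon}{\epsilon+x}$ is neither monotone nor uniformly bounded as $\epsilon\downarrow 0$, so one must use the two-piece splitting above. That single computation simultaneously rules out an atom of $\tau_\mu$ at $0$, yields the explicit identification $\phi_\mu(-0)=\eta'$, and encodes the free regular L\'evy integrability $\int_{(0,1)}t\,\nu<\infty$ as $\int x^{-1}\tau_\mu<\infty$.
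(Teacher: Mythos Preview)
Your proof is correct and takes a genuinely different route from the paper's. The paper argues via the boolean-to-free Bercovici-Pata bijection $\mathbb{B}$: since $K_{\mathbb{B}^{-1}(\mu)}=\phi_\mu$, the pair $(\gamma_\mu,\tau_\mu)$ is exactly the boolean L\'evy--Khintchine data of $\mathbb{B}^{-1}(\mu)$, and then a cited result (Proposition~2.5 of \cite{Has10}) characterizes $\text{supp}\,\rho\subset[0,\infty)$ by $\text{supp}\,\eta_\rho\subset[0,\infty)$ together with $F_\rho(-0)\leq 0$; applying this to $\rho=\mathbb{B}^{-1}(\mu)$ and using the already established equivalence (i)$\Leftrightarrow$(iii) of Theorem~\ref{charac} finishes the argument in two lines. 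Your approach instead performs the explicit change of measure $\nu(dt)=\tfrac{1+t^2}{t^2}\tau_\mu(dt)$ directly between the Nevanlinna form of $\phi_\mu$ and the free-regular form (\ref{eq00}), handling the limit $\phi_\mu(-0)$ by a monotone/dominated convergence splitting. What this buys you is a self-contained, elementary argument that does not rely on the boolean machinery or the external reference, and that makes the dictionary $\eta'=\phi_\mu(-0)$, $\nu\leftrightarrow\tau_\mu$ completely transparent; the paper's version is shorter but conceptually heavier, since it passes through $\mathbb{B}$ and quotes the support characterization from elsewhere.
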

\begin{proof}
Denote by $\mathbb{B}$ the Bercovici-Pata bijection from boolean to free convolutions: $z - F_\mu(z) = \phi_{\mathbb{B}(\mu)}(z)$. Let us denote by $z - F_\mu(z) = \gamma_\mu +\int_{\mathbb{R}}\frac{1+xz}{z-x}\eta_\mu(dx)$ the boolean L\'{e}vy--Khintchine representation. As proved in Proposition 2.5 of \cite{Has10} 
$\text{supp~} \mu \subset [0,\infty)$ if and only if $\text{supp~} \eta_\mu \subset [0,\infty)$ and $F_\mu(-0) \leq 0$.  By definition, $\nu$ is free regular if and only if $\mathbb{B}^{-1}(\nu)$ is supported on $[0,\infty)$, yielding the conclusion. 
\end{proof}

As we saw,  $\mu \in I^{\boxplus}\cap \mathcal{M}^+$ does not imply $\mu \in I^\boxplus _{r+}$. However, if $\mu$ has a singularity at $0$, such an implication is possible. We need a lemma to prove it.  
\begin{lemma}\label{lem345} 
Let $\mu$ be a $\boxplus$-infinitely divisible distribution with $a(\mu) > -\infty$. Then $a(\tau_\mu) \geq F_\mu(a(\mu)-0)$. 
\end{lemma}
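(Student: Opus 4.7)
Let $c := F_\mu(a(\mu)-0)$. The plan is to show that $\phi_\mu$ extends analytically across the real interval $(-\infty, c)$ with real boundary values, and then to invoke Stieltjes inversion on the Nevanlinna representation $\phi_\mu(z) = \gamma_\mu + \int_{\mathbb{R}} \frac{1+xz}{z-x}\,\tau_\mu(dx)$ to conclude $\tau_\mu((-\infty, c)) = 0$, which is exactly $a(\tau_\mu) \geq c$.

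First I would study $F_\mu$ on $(-\infty, a(\mu))$. Because $\operatorname{supp}\mu \subset [a(\mu), \infty)$, the Cauchy transform $G_\mu$ is analytic on $\mathbb{C} \setminus [a(\mu), \infty)$, and on $(-\infty, a(\mu))$ satisfies $G_\mu(x) < 0$ and $G_\mu'(x) = -\int (x-t)^{-2}\mu(dt) < 0$. Hence $F_\mu = 1/G_\mu$ is real-analytic and strictly increasing on $(-\infty, a(\mu))$, mapping this interval bijectively onto $(-\infty, c)$, with $F_\mu' > 0$ throughout. The inverse function theorem then provides a biholomorphism between open complex neighborhoods $U$ of $(-\infty, a(\mu))$ and $V$ of $(-\infty, c)$; call its inverse $H \colon V \to U$. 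Because $F_\mu$ is real with positive derivative on the real axis, $H$ sends $V \cap \mathbb{C}_\pm$ into $U \cap \mathbb{C}_\pm$ and takes real values on $(-\infty, c)$.

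Next I would match $H$ with the right inverse $F_\mu^{-1} = \phi_\mu + \operatorname{id}$. Since $\mu \in I^\boxplus$, $\phi_\mu$ extends to all of $\mathbb{C}_+$ via its Nevanlinna form, so $F_\mu^{-1}$ is globally defined on $\mathbb{C}_+$ with $F_\mu(F_\mu^{-1}(z)) = z$. Tracking the analytic continuation of $F_\mu^{-1}$ along a path from $\Gamma_{\alpha, M}$ (where $F_\mu^{-1}(z) \sim z$) down to a point near the real axis, continuity together with the local biholomorphism forces $F_\mu^{-1} = H$ on $V \cap \mathbb{C}_+$; hence $\phi_\mu = H - \operatorname{id}$ continues analytically to $V$ with real boundary values on $(-\infty, c)$. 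Stieltjes inversion applied to the Nevanlinna representation of $\phi_\mu$ recovers $\tau_\mu$ from the distributional limits of $\operatorname{Im}\phi_\mu(x+i\varepsilon)$ as $\varepsilon \downarrow 0$; since these limits vanish on $(-\infty, c)$, we obtain $\tau_\mu((-\infty, c)) = 0$. The main obstacle is precisely this matching step: ruling out that the path-continuation of $F_\mu^{-1}$ lands on a different sheet of preimages of $F_\mu$. The free infinite divisibility of $\mu$, which guarantees a single well-defined analytic branch of $F_\mu^{-1}$ on all of $\mathbb{C}_+$ asymptotic to the identity at infinity, is exactly what closes this gap.
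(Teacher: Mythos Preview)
Your proof is correct and follows essentially the same route as the paper: show that $F_\mu$ is strictly increasing on $(-\infty,a(\mu))$, obtain a local analytic inverse on a complex neighborhood of $(-\infty,c)$, deduce that $\phi_\mu$ continues analytically across $(-\infty,c)$ with real boundary values, and conclude via Stieltjes inversion that $\tau_\mu$ gives no mass there. The paper's proof is a terse three-line version of exactly this; your write-up fills in the details the paper omits, in particular the matching of the local inverse $H$ with the globally defined branch $F_\mu^{-1}$ on $\mathbb{C}_+$, which the paper simply asserts as ``this gives an analytic continuation.''
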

\begin{proof}
Since $F_\mu$ is strictly increasing in $(-\infty, a(\mu))$, one can define $F_\mu^{-1}$ in an open set of $\mathbb{C}$ containing $(-\infty, F_\mu(a(\mu)-0))$. This gives an analytic continuation of $F_\mu^{-1}$ from $\mathbb{C}\ \backslash \real$ to $\mathbb{C} \backslash [F_\mu(a(\mu)-0), \infty)$. 
Therefore, $\tau_\mu$ is supported on $[F_\mu(a(\mu)-0), \infty)$. 
\end{proof}
\begin{theorem}\label{thm110}
Let $\mu$ be a $\boxplus$-infinitely divisible measure supported on $[0,\infty)$ satisfying either of the following conditions: (i) $\mu(\{0 \}) >0$; (ii) $\mu(\{0 \}) =0$ and $\int_0 ^1 \frac{\mu(dx)}{x} = \infty$.  Then $\mu$ is free regular. 
\end{theorem}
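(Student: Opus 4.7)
By Proposition \ref{prop345}, free regularity of a $\boxplus$-infinitely divisible $\mu$ is equivalent to $a(\tau_\mu) \geq 0$ together with $\phi_\mu(-0) \geq 0$. The plan is to verify these two conditions by analyzing the boundary behaviour of $G_\mu$ and $F_\mu$ at the origin, exploiting the singularity hypotheses in (i) and (ii) to force $F_\mu(0-0)=0$, and then invoking Lemma \ref{lem345}.

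First I would show that in both cases $a(\mu)=0$. This is immediate in (i); in (ii) it follows because if $a(\mu) > 0$, then $\mu$ is supported in $[a(\mu),\infty)$ and $\int_0^1 x^{-1}\mu(dx) \leq \mu([a(\mu),1])/a(\mu) < \infty$, contradicting the hypothesis. Next, I would compute $G_\mu(0-0)$ by taking $z \to 0^-$ along the negative real axis. In case (i) the atomic contribution $\mu(\{0\})/z$ blows up to $-\infty$; in case (ii) monotone convergence gives $G_\mu(z) \to -\int_{(0,\infty)} \mu(dx)/x = -\infty$ because for $z<0$ and $x>0$ the integrand $1/(z-x)$ is negative and increases monotonically to $-1/x$ as $z \uparrow 0$. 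In either case $G_\mu(0-0)=-\infty$, hence $F_\mu(0-0)=1/G_\mu(0-0)=0$.

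Now I would apply Lemma \ref{lem345}: since $a(\mu)=0$,
\[
a(\tau_\mu) \;\geq\; F_\mu(a(\mu)-0) \;=\; F_\mu(0-0) \;=\; 0,
\]
which gives the first condition of Proposition \ref{prop345}. For the second, I would argue that since $F_\mu$ is strictly increasing on $(-\infty,0)$ with $F_\mu(-\infty)=-\infty$ and $F_\mu(0-0)=0$, it maps $(-\infty,0)$ bijectively onto $(-\infty,0)$, so its inverse $F_\mu^{-1}$ (coinciding with the analytic continuation guaranteed by the proof of Lemma \ref{lem345}) is continuous on $(-\infty,0]$ with $F_\mu^{-1}(0-0)=0$. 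Therefore $\phi_\mu(-0)=F_\mu^{-1}(0-0)-0=0 \geq 0$, and Proposition \ref{prop345} yields that $\mu$ is free regular.

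The only delicate step is justifying $F_\mu(0-0)=0$ under hypothesis (ii): the interchange of limit and integral must be handled with care because the integrand is unbounded near $x=0$. Monotone convergence works because, for $z<0$ fixed, the function $x\mapsto 1/(z-x)$ is negative and bounded below by $-1/x$, and the family is monotone in $z$. Once this step is clean, the rest of the argument is a direct application of Lemma \ref{lem345} and Proposition \ref{prop345}.
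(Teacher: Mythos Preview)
Your proof is correct and follows essentially the same route as the paper: both establish $F_\mu(0-0)=0$ from the singularity hypotheses, invoke Lemma \ref{lem345} to get $a(\tau_\mu)\geq 0$, and then deduce $\phi_\mu(-0)=0$ before applying Proposition \ref{prop345}. The only cosmetic difference is that the paper obtains $\phi_\mu(-0)=0$ by passing to the limit in the identity $\phi_\mu(F_\mu(z))=z-F_\mu(z)$, whereas you phrase the same computation as $F_\mu^{-1}(0-0)=0$; your version also spells out the justification of $F_\mu(0-0)=0$, which the paper leaves implicit.
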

\begin{proof}
By assumption, $F_\mu(-0) =0$. Lemma \ref{lem345} implies that $a(\tau_\mu) \geq 0$. Taking the limit $z \nearrow 0$ in the identity $\phi_\mu(F_\mu(z)) = z - F_\mu(z)$, one concludes that $\phi_\mu(-0) =0$. Therefore, $\mu$ is free regular from Proposition \ref{prop345}. 
\end{proof}

\subsection{Free subordinators and free regular measures}
A particularly important family of real-valued processes with independent increments
is that of L\'{e}vy processes~\cite{Bertoin, Sato}. Let us recall the definition of a L\'{e}vy process. A continuous-time process $\{X_t\}_{t\geq 0}$ 
with values in $\mathbb{R}$ is called a L\'{e}vy process if 
\begin{enumerate}[{\rm (1)}]
\item 
Its sample paths are right-continuous and have left limits at every time point $t$.
\item 
For all $0\leq t_1 < \cdots< t_n$, the random variables $X_{t_1}, X_{t_2}-X_{t_1},\cdots, X_{t_n}-X_{t_{n-1}}$ are independent.
\item
For all $0\leq s\leq t$, the increments $X_t-X_s$ and $X_{t-s}-X_0$ have the same distribution. 
\item For any $s\geq0$, $X_{s+t}\rightarrow X_s$ in probability, as $t\to0$, i.e. the distribution of $X_{s+t}-X_s$ converges weakly to
$\delta_0$, as $t\rightarrow 0$.
\end{enumerate}
We assume that $X_0=0$. 
Now, if we denote by $\mu_{t}$ the distribution of $X_t$, 
then these measures satisfy the property
\begin{equation}\label{99}
\mu_{s+t} = \mu_s \ast \mu_t
\end{equation}
for any $s,t \geq 0$. The relation between infinitely divisible distributions and L\'{e}vy processes can be stated in the following proposition.

\begin{proposition} 
If $\{X_t\}_{t\geq 0}$ is a L\'{e}vy process, then for each $t>0$ the random variable $X_t$
has an infinitely divisible distribution. Conversely, if $\mu$ is an infinitely divisible distribution
then there is a L\'{e}vy process such that $X_1$ has distribution $\mu$.
\end{proposition}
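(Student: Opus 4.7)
My plan is to prove both directions using the convolution semigroup structure.

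For the forward direction, fix $t>0$ and $n\in\mathbb{N}$. Decompose
$$ X_t = \sum_{k=1}^{n}(X_{kt/n}-X_{(k-1)t/n}). $$
By property (2) the summands are independent, and by property (3) each has the same law as $X_{t/n}$, namely $\mu_{t/n}$. Hence $\mu_t = \mu_{t/n}^{\ast n}$, which exhibits $\mu_t$ as the $n$-fold classical convolution of a probability measure for every $n$; therefore $\mu_t\in I^\ast$.

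For the converse, start from $\mu\in I^\ast$ with classical characteristic triplet $(\eta,a,\nu)$. The first step is to build the convolution semigroup: define $\mu_t$ to be the probability measure whose classical cumulant transform equals $t\,\mathcal{C}^\ast_\mu(u)$, i.e. the measure with triplet $(t\eta,ta,t\nu)$. The Lévy--Khintchine representation (\ref{levykintchine clasica}) shows that each such $\mu_t$ is a well-defined probability measure, and by inspection $\mu_{s+t}=\mu_s\ast\mu_t$, $\mu_0=\delta_0$, and $\mu_t\to\delta_0$ weakly as $t\to 0$. The second step is to invoke Kolmogorov's extension theorem to produce a process $\{Y_t\}_{t\ge 0}$ whose finite-dimensional distributions satisfy
$$ (Y_{t_1},\,Y_{t_2}-Y_{t_1},\,\ldots,\,Y_{t_k}-Y_{t_{k-1}}) \sim \mu_{t_1}\otimes\mu_{t_2-t_1}\otimes\cdots\otimes\mu_{t_k-t_{k-1}} $$
for all $0\le t_1<\cdots<t_k$. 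By construction $\{Y_t\}$ has independent, stationary increments, is stochastically continuous (property (4)), and $Y_1\sim\mu$.

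The remaining step, which I expect to be the main obstacle, is to promote $\{Y_t\}$ to a modification with right-continuous sample paths possessing left limits, so that property (1) is met. This is standard but not trivial: using stochastic continuity together with a Doob-type maximal inequality applied to the dyadic rationals in $[0,T]$, one shows that the one-sided limits $\lim_{s\downarrow t,\,s\in\mathbb{Q}}Y_s$ exist almost surely, and defining $X_t$ to be this limit yields a càdlàg process whose finite-dimensional distributions coincide with those of $\{Y_t\}$. In particular $X_1\sim\mu$ and $\{X_t\}$ satisfies (1)--(4). Since this regularization is classical (see, e.g., Theorem 11.5 of Sato~\cite{Sato}), in the present paper it suffices to cite it rather than reproduce the argument.
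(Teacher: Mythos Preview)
The paper does not actually prove this proposition; it is stated as background from classical probability, with implicit reference to standard texts such as Sato~\cite{Sato} and Bertoin~\cite{Bertoin}. Your sketch is a correct outline of the standard textbook argument (decomposition into i.i.d.\ increments for one direction; construction of the convolution semigroup, Kolmogorov extension, and c\`adl\`ag regularization for the converse), and it is appropriate that you cite Sato for the regularization step rather than reproduce it. There is therefore nothing in the paper to compare against --- your proof simply supplies what the authors chose to take for granted.
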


A \emph{subordinator} is a real-valued L\'{e}vy process with non-decreasing path, this class has been broadly studied \cite{Bertoin, CoTa, Sato}.
\begin{proposition}\label{subordinator} 
Let $\{X_t\}_{t\geq 0}$ be a L\'{e}vy process. The process $X_t$ is
a subordinator if and only if the distribution of $X_1$ is supported on the positive real line.
\end{proposition}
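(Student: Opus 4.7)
The statement is classical and can be proved by combining the definition of a subordinator with a characterization of those classical infinitely divisible distributions whose semigroups stay on $[0,\infty)$.

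The forward direction is immediate from the definition. Since a subordinator has non-decreasing sample paths and $X_0=0$, one has $X_1\geq 0$ almost surely, so the distribution $\mu_1$ of $X_1$ is supported on $[0,\infty)$.

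For the backward direction, the plan is to show first that the one-dimensional marginals $(\mu_t)_{t\geq 0}$ are all supported on $[0,\infty)$, and then to use this to conclude that the sample paths are non-decreasing. For the first step, since $\mu_1$ is infinitely divisible and supported on $[0,\infty)$, its classical L\'evy-Khintchine triplet $(\eta,a,\nu)$ must be ``regular'' in the sense of \eqref{eq01}: $a=0$, the L\'evy measure $\nu$ is concentrated on $(0,\infty)$ with $\int(1\wedge x)\,\nu(dx)<\infty$, and the resulting drift $\eta'$ is non-negative. This is precisely the content of Theorem 24.11 of Sato~\cite{Sato} quoted in the introduction. The triplet of $\mu_t$ is then $(t\eta,0,t\nu)$ in the regular form, so $\mu_t$ is again concentrated on $[0,\infty)$ for every $t\geq 0$.

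For the second step, fix any $0\leq s\leq t$. By the independent-stationary-increments property, $X_t-X_s$ has distribution $\mu_{t-s}$, which is supported on $[0,\infty)$, so $X_s\leq X_t$ almost surely. Choosing a countable dense set $D\subset[0,\infty)$, one obtains a single null set outside of which $X$ is non-decreasing on $D$. Using the right-continuity of paths from item (1) of the definition of a L\'evy process, this monotonicity extends to all of $[0,\infty)$, so $X$ is a subordinator.

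The only non-routine step is the characterization of infinitely divisible laws supported on $[0,\infty)$ (regularity of the triplet), and this is invoked as a known result rather than reproved. Everything else is a direct use of the definitions of L\'evy process and of support.
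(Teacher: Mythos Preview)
Your proof is correct. Note, however, that the paper does not actually prove this proposition: it is stated as a classical background result about ordinary (commutative) L\'evy processes, with the key ingredient (Theorem 24.11 of Sato~\cite{Sato}) already cited in the introduction. Your argument is the standard one and matches the route implicit in the paper's references: regularity of the triplet gives $\mu_t\in\mathcal{M}^+$ for all $t$, and then the c\`adl\`ag property upgrades almost-sure monotonicity on a countable dense set to monotonicity everywhere. There is nothing to compare beyond that.
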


Now, following Biane~\cite{Biane}, we define a process with free additive increments to be a map $t\mapsto X_t$ 
from $\mathbb{R}_+$ to the set of self-adjoint elements affiliated to some
$W^{\ast }$-probability space $(A, \varphi)$ such that, for any $0 \leq t_1 < \cdots< t_n$,
the elements $X_{t_1}, X_{t_2}-X_{t_1},\cdots, X_{t_n}-X_{t_{n-1}} $ are free. 
We also require the weak continuity of the distributions. However, we do not require an analog of  property (1) of a classical L\'evy process since there is no sample path in the $W^\ast$-probability setting. 

To define a free (additive) L\'evy process, we need stationarity. As Biane proposed, there are two natural classes which
deserve to be called free L\'{e}vy processes, depending on whether we ask for time homogeneity of the distributions of the increments
or of the transition probabilities. Here, we will use the former since in this case the distributions of a process form a semi-group for the free additive convolution. 

\begin{definition} A free additive L\'{e}vy process is a map $t\mapsto X_t$ from $\mathbb{R}_+$ to the set of self-adjoint elements affiliated to some
$W^{\ast }$-probability space $(A, \varphi)$, such that:
\begin{enumerate}[{\rm (1)}]
\item
For all $t_1 < \cdots< t_n$ , the elements $X_{t_1}, X_{t_2}-X_{t_1},\cdots, X_{t_n}-X_{t_{n-1}} $ are free.
\item
For all $0\leq s\leq t$ the increments $X_t-X_s$ and $X_{t-s}-X_0$ have the same distribution.
\item
For any $s\geq0$ in, $X_{s+t}\rightarrow X_s$ in probability, as $t\rightarrow0$, i.e. the distribution of $X_{s+t}-X_s$ converges weakly to
$\delta_0$, as $t\rightarrow 0$. 
\end{enumerate}
We also assume that $X_0=0$. 
\end{definition}
If we denote by $\mu_t$ the distribution of $X_t$, these measures satisfy the analog of (\ref{99}): 
\[
\mu_{s+t} = \mu_s \boxplus \mu_t
\]
for $s,t \geq0$. 
\begin{definition}
A free additive L\'{e}vy process  $X_t$ is called a \emph{free subordinator} if for all $0<s<t$ the increment $X_t-X_s$ is positive.
\end{definition}

We state the analog of Proposition \ref{subordinator} which clarifies the role of free regular measures in terms of free L\'{e}vy processes: they correspond to free subordinators.

\begin{proposition}\label{freesubordinator}
Let $X_t$ be a free additive L\'{e}vy process.
The process $X_t$ is a free subordinator if and only if the distribution of $X_1$ is free regular.
\end{proposition}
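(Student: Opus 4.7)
The plan is to reduce everything to condition (iv) of Theorem~\ref{charac}, which says that $\mu$ is free regular iff $\mu^{\boxplus t} \in \mathcal{M}^+$ for every $t > 0$. The bridge is the routine observation that for a free additive L\'evy process with $X_0 = 0$ and $\mu := \mu_1$, every increment $X_t - X_s$ (with $0 \le s < t$) has distribution $\mu^{\boxplus (t-s)}$. Indeed, stationarity gives that $X_t - X_s$ has the same law as $X_{t-s} - X_0 = X_{t-s}$, and the free-convolution semigroup property $\mu_{u+v} = \mu_u \boxplus \mu_v$ combined with continuity in $t$ forces $\mu_t = \mu^{\boxplus t}$ for all $t \ge 0$. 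Note also that $X_t - X_s$ is automatically self-adjoint, so ``positive increment'' is literally the same as ``distribution supported in $[0,\infty)$''.

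For the forward implication, assume $X_t$ is a free subordinator. Fix any $r > 0$ and pick, say, $s = 1$ and $t = 1 + r$. Then $X_t - X_s \ge 0$, i.e.\ its distribution $\mu^{\boxplus r}$ lies in $\mathcal{M}^+$. Since $r > 0$ was arbitrary, $\mu^{\boxplus r} \in \mathcal{M}^+$ for every $r > 0$, so by Theorem~\ref{charac}(iv) (equivalently Lemma~\ref{characterization regular}), $\mu$ is free regular.

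For the converse, assume $\mu$ is free regular. Then by Theorem~\ref{charac}(iv), $\mu^{\boxplus r} \in \mathcal{M}^+$ for every $r > 0$. For any $0 < s < t$ the increment $X_t - X_s$ has distribution $\mu^{\boxplus (t-s)} \in \mathcal{M}^+$, so $X_t - X_s \ge 0$. Hence $X_t$ is a free subordinator.

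There is no real obstacle: the entire analytic content is packaged in the equivalence (i)$\Leftrightarrow$(iv) of Theorem~\ref{charac} (Benaych-Georges' lemma), and the rest is the transparent identification of increments of a free L\'evy process with the semigroup $\{\mu^{\boxplus t}\}_{t\ge 0}$. The only mild subtlety worth mentioning explicitly is that the definition of free subordinator only requires $s > 0$, but because $r = t - s$ ranges over all of $(0,\infty)$ as $s,t$ vary, this restriction is harmless.
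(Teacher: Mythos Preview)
Your proof is correct and follows essentially the same approach as the paper's own proof: both reduce the statement to Lemma~\ref{characterization regular} (Theorem~\ref{charac}(iv)) via the identification of the law of $X_t - X_s$ with $\mu^{\boxplus(t-s)}$ through stationarity. If anything, you are slightly more careful than the paper in handling the $s>0$ constraint in the definition of a free subordinator (the paper directly writes $X_t - X_0$), but this is a cosmetic difference rather than a genuine divergence in method.
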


\begin{proof}
If $X_t$ is a free subordinator, it is clear that the distribution $\mu_1$ of $X_1$ is free regular since $X_t-X_0=X_t$ is positive and then the distribution $\mu_t=\mu_1^{\boxplus t}$ is supported on $\mathbb{R}_+$. Lemma \ref{characterization regular} allows us to conclude.

 Conversely, suppose that the distribution $\mu=\mu_1$ of $X_1$ is free regular. We want to see that $X_t-X_s$ is positive. Since $X_t$ is  a free L\'{e}vy process it is stationary and then $X_t-X_s$ has the same distribution as $X_{t-s}$, which is $\mu^{\boxplus (t-s)}$ and then, by Lemma \ref{characterization regular}, supported on  $\mathbb{R}_+$, i.e. $X_{t-s}$ positive. 
\end{proof}

\section{Closure properties}

   The following property was proved by Belinschi and Nica \cite{BN08}. For $\mu\in\mathcal{M}$ and $\nu\in\mathcal{M}^+$,
 \begin{equation}\label{homo}
\mathbb{B}(\mu\boxtimes\nu)=\mathbb{B}(\mu)\boxtimes\mathbb{B}(\nu).
\end{equation}

This suggested strongly that if $\mu $ and $\nu $ are $\boxplus$-infinitely
divisible then $\mu \boxtimes \nu $ is also $\boxplus$-infinitely divisible. However, this is not true in general, even
if both $\mu $ and $\nu$ belong to $ \mathcal{M}^{+}$ or $\mu=\nu$. The following counterexample was
given by Sakuma in \cite{Sa10}.

\begin{proposition} \label{sakuma ex}
Let $w_{+}$ be the Wigner distribution with density
\begin{equation*}
w_{2,1}(x)=\frac{1}{2\pi }\sqrt{4-(x-2)^{2}}\cdot 1_{[0,4]}(x)dx. 
\end{equation*}
Then $w_{+}\boxtimes w_{+}$ is not $\boxplus$-infinitely divisible.
\end{proposition}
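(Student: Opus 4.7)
The approach is to verify the Nevanlinna--type criterion coming from the integral representation of $\phi_\mu$ recalled in Section~3: if $\mu \in I^\boxplus$, then $\phi_\mu$ extends analytically from $\Gamma_{\alpha,M}$ to all of $\mathbb{C}_+$ and satisfies $\Im \phi_\mu(z) \leq 0$ on $\mathbb{C}_+$ (since the kernel $(1+xz)/(z-x)$ has imaginary part $-(1+x^2)\Im z/|z-x|^2$). I would rule out $\nu := w_+ \boxtimes w_+$ by producing an explicit $w \in \mathbb{C}_+$ with $\Im \phi_\nu(w) > 0$.

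The computation is cleanest through the $S$-transform. Since $w_+$ is the translate by $2$ of the standard semicircle, one reads off $F_{w_+}^{-1}(z) = z + 2 + 1/z$, and inverting $\Psi_{w_+}$ via $\Psi_\mu(z) = (1/z) G_\mu(1/z) - 1$ yields
$$
S_{w_+}(\zeta) = \frac{1}{1 + \sqrt{1+\zeta}}.
$$
By Proposition~\ref{StransfPos}, $S_\nu(\zeta) = 1/(1+\sqrt{1+\zeta})^2$. Using the general identity $\phi_\mu(w) = 1/S_\mu(\zeta)$ at $w = 1/(\zeta S_\mu(\zeta))$ and the substitution $u = \sqrt{1+\zeta}$ to clear the radical, a short rational manipulation produces
$$
\phi_\nu(w) = \left(\frac{2w}{w-1}\right)^{2}.
$$
Since $\phi_\nu$ agrees with this rational function on a truncated cone, the identity theorem makes it the unique analytic continuation of $\phi_\nu$ to $\mathbb{C}\setminus\{1\}$.

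The M\"obius factor $f(w) = 2w/(w-1) = 2 + 2/(w-1)$ sends $\mathbb{C}_+$ bijectively onto $\mathbb{C}_-$, so $\phi_\nu(\mathbb{C}_+) = f(\mathbb{C}_+)^2 = (\mathbb{C}_-)^2 = \mathbb{C}\setminus[0,\infty)$, which plainly contains points of positive imaginary part. As a concrete witness, $w = \tfrac{1}{2} + \tfrac{i}{4}$ gives $f(w) = -\tfrac{6}{5} - \tfrac{8}{5}i$ and hence $\phi_\nu(w) = -\tfrac{28}{25} + \tfrac{96}{25}\, i \in \mathbb{C}_+$, contradicting the Nevanlinna--type bound above. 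Therefore $\nu \notin I^\boxplus$.

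The delicate step is the inversion of $S_\nu$ back to $\phi_\nu$, because the $S$-transform is a priori defined only near $(-1,0)$ and one must track the branch of $\sqrt{1+\zeta}$ consistently. That difficulty disappears in the end, since the resulting expression $4w^2/(w-1)^2$ is a single-valued rational function; its analytic continuation to $\mathbb{C}\setminus\{1\}$ is then unambiguous, and the failure of the half-plane condition at $w = \tfrac{1}{2} + \tfrac{i}{4}$ is a purely algebraic check.
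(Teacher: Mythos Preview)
Your proof is correct. The paper does not prove this proposition; it simply cites Sakuma~\cite{Sa10} for the counterexample. You have supplied a clean self-contained argument where the paper gives none.

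Your computations check out. From $\phi_{w_+}(w)=2+1/w$ one solves $\zeta S_{w_+}(\zeta)^2+2S_{w_+}(\zeta)-1=0$ and, picking the branch with $S_{w_+}(0)=1/m_1(w_+)=1/2$, obtains $S_{w_+}(\zeta)=1/(1+\sqrt{1+\zeta})$. Squaring and inverting via the relation $\phi_\mu\bigl(1/(\zeta S_\mu(\zeta))\bigr)=1/S_\mu(\zeta)$ with the substitution $u=\sqrt{1+\zeta}$ indeed gives the rational expression $\phi_\nu(w)=4w^2/(w-1)^2$. Since $\nu=w_+\boxtimes w_+$ is compactly supported, $\phi_\nu$ is analytic in a full neighbourhood of infinity; the $S$-transform derivation produces the identity on the negative real axis near $-\infty$, and the identity theorem then propagates it to the truncated cone $\Gamma_{\alpha,M}$. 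The test point $w=\tfrac12+\tfrac{i}{4}$ gives $\phi_\nu(w)=-\tfrac{28}{25}+\tfrac{96}{25}i\in\mathbb{C}_+$, which contradicts the Pick--Nevanlinna representation $\phi_\mu(z)=\gamma_\mu+\int\frac{1+xz}{z-x}\,\tau_\mu(dx)$ required of any $\mu\in I^\boxplus$.

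One small point worth making explicit for the reader: the $S$-transform computation is carried out for real $\zeta\in(-1,0)$, hence real negative $w$, so the bridge to the cone $\Gamma_{\alpha,M}\subset\mathbb{C}_+$ really does use that $\phi_\nu$ is analytic at infinity (equivalently, that $\nu$ has compact support). You allude to this in your final paragraph, but stating it plainly removes any doubt about why a formula derived on the negative axis constrains behaviour in $\mathbb{C}_+$.
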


It is not a coincidence that in this counterexample $w_{+}$  is not free regular. Indeed, if either $\nu$ or $\mu$ is free regular the problem is fixed.
\begin{proposition}
\label{closure} Let $\mu \in I_{r+}^\boxplus $ and $\nu \in I^{\boxplus },$
then $\mu \boxtimes \nu $ is freely infinitely divisible. Moreover if $\nu \in
I_{r+}^{\boxplus }$ then $\mu \boxtimes \nu \in I_{r+}^{\boxplus }.$
\end{proposition}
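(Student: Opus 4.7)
The plan is to deduce this proposition almost immediately from the Belinschi--Nica homomorphism identity (\ref{homo}) together with the characterization of free regular measures via $\mathbb{B}$ given in Theorem \ref{charac}(iii). Recall that $\mathbb{B}\colon \mathcal{M}\to I^{\boxplus}$ is a bijection (indeed a homeomorphism), and that $\mathbb{B}(\mathcal{M}^{+})=I_{r+}^{\boxplus}$. This reduces the problem from a statement about free multiplicative convolution on $I^{\boxplus}$ to a statement about free multiplicative convolution on $\mathcal{M}$, which is much easier: the only issue there is whether the convolution is defined and whether it preserves $\mathcal{M}^{+}$, both of which are automatic under the hypotheses.

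Concretely, I would first invoke Theorem \ref{charac}(iii) to write $\mu=\mathbb{B}(\alpha)$ for some $\alpha\in\mathcal{M}^{+}$, and then use the bijectivity of $\mathbb{B}$ to write $\nu=\mathbb{B}(\beta)$ for some $\beta\in\mathcal{M}$. Since $\alpha\in\mathcal{M}^{+}$, the free multiplicative convolution $\alpha\boxtimes\beta$ is well defined in $\mathcal{M}$, and applying the identity (\ref{homo}) of Belinschi--Nica yields
\[
\mu\boxtimes\nu \;=\; \mathbb{B}(\alpha)\boxtimes\mathbb{B}(\beta) \;=\; \mathbb{B}(\alpha\boxtimes\beta).
\]
Because $\mathbb{B}$ takes values in $I^{\boxplus}$, this already shows that $\mu\boxtimes\nu$ is $\boxplus$-infinitely divisible, proving the first assertion.

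For the second assertion, if in addition $\nu\in I_{r+}^{\boxplus}$, then by Theorem \ref{charac}(iii) we may take $\beta\in\mathcal{M}^{+}$ as well. Since the free multiplicative convolution of two measures in $\mathcal{M}^{+}$ lies again in $\mathcal{M}^{+}$ (recall $X_{1}^{1/2}X_{2}X_{1}^{1/2}\geq 0$ whenever $X_{1},X_{2}\geq 0$ are free and self-adjoint), we obtain $\alpha\boxtimes\beta\in\mathcal{M}^{+}$, and hence
\[
\mu\boxtimes\nu \;=\; \mathbb{B}(\alpha\boxtimes\beta) \;\in\; \mathbb{B}(\mathcal{M}^{+}) \;=\; I_{r+}^{\boxplus}.
\]

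There is essentially no obstacle here beyond verifying that the hypotheses of (\ref{homo}) are met; the main point of the argument is conceptual, namely that the correct lift of $\boxplus$-infinite divisibility along $\mathbb{B}^{-1}$ is just membership in $\mathcal{M}$ (or in $\mathcal{M}^{+}$ for free regularity), and under this lift the counterexample of Proposition \ref{sakuma ex} disappears because the preimage of $w_{+}$ under $\mathbb{B}$ is not supported on $[0,\infty)$.
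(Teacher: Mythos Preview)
Your proof is correct and follows essentially the same approach as the paper: write $\mu=\mathbb{B}(\mu_0)$ with $\mu_0\in\mathcal{M}^+$ and $\nu=\mathbb{B}(\nu_0)$ with $\nu_0\in\mathcal{M}$, apply the Belinschi--Nica identity (\ref{homo}) to obtain $\mu\boxtimes\nu=\mathbb{B}(\mu_0\boxtimes\nu_0)\in I^{\boxplus}$, and for the second part use that $\nu_0\in\mathcal{M}^+$ forces $\mu_0\boxtimes\nu_0\in\mathcal{M}^+$ so that $\mu\boxtimes\nu\in\mathbb{B}(\mathcal{M}^+)=I_{r+}^{\boxplus}$.
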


\begin{proof}

If $\mu\in I_{r+}^{\boxplus }$ then $\mu=\mathbb{B}(\mu_0)$ for some $\mu_0\in \mathcal{M}^+$. Similarly if $\nu\in I^{\boxplus }$ then $\mu=\mathbb{B}(\nu_0)$ for some $\nu_0\in \mathcal{M}$.
Then $\mu_0\boxtimes\nu_0$ is a well defined probability measure and Equation (\ref{homo}) gives $\mu\boxtimes\nu=\mathbb{B}( \mu_0\boxtimes\nu_0)\in I^{\boxplus }$. 

Now,  if $\nu\in I_{r+}^{\boxplus }$ then  $\nu_0\in \mathcal{M}^+$ and then $\mu_0\boxtimes\nu_0 \in \mathcal{M}^+$. Therefore $\mu\boxtimes\nu=\mathbb{B}( \mu_0\boxtimes\nu_0)\in I_{r+}^{\boxplus }$ since $\mathbb{B}$ sends positive measures to free regular ones. 
\end{proof}

As a consequence we answer a question in Sakuma and P\'{e}rez-Abreu \cite{PeSa}: If $\mu$ is free regular then $\mu\boxtimes\mu$ is also free regular. 

\begin{remark}
The previous proposition raises a question on a relation between the free subordinators associated to $\nu$, $\mu$ and $\nu\boxtimes\mu$. Let 
$D_a$ be the dilation operator defined by $\int_{\mathbb{R}}f(x)(D_a \mu)(dx) = \int_{\mathbb{R}}f(ax)\mu(dx)$ for any bounded continuous function $f$ and measure $\mu$. Equivalently, if a random variable $X$ follows a distribution $\mu$, $D_a \mu$ is the distribution of $aX$.  
For $\mu,\nu\in I^\boxplus_{r+}$, the identity 
 \begin{equation}
D_{1/t}(\mu ^{\boxplus t}\boxtimes \nu ^{\boxplus t})=(\mu \boxtimes \nu
)^{\boxplus t},\text{ }t\geq 0  \label{dilation}
\end{equation}
was essentially proved in \cite[Proposition 3.5]{BN08}. 
 This can be interpreted as follows in terms of processes. Let $X_t$ and $Y_t$ be free subordinators with $X_1\sim\mu$ and $Y_1\sim\nu$, which are free between them. Then the process $\frac{1}{t}X^{1/2}_tY_tX^{1/2}_t$ is distributed as a free subordinator  $Z_t$ such that $Z_1\sim\mu \boxtimes \nu$.
\end{remark}



It is clear from Proposition \ref{closure} that if $\mu $ is in $I_{r+}^{\boxplus }$ then
 $\mu ^{\boxtimes n}$ also belongs to $I_{r+}^{\boxplus },$ for $n\in \mathbb{N}$.
 Furthermore, this is also true for $t\geq 1,$ $\mu ^{\boxtimes t} \in I^{\boxplus}_{r+}$, when $t$
is not necessarily an integer, as we state in following proposition.

\begin{proposition}
If $\mu \in I_{r+}^{\boxplus }$, then for all $s\geq 1$, $\mu ^{\boxtimes
s}\in I_{r+}^{\boxplus }$.
\end{proposition}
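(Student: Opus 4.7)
The plan is to reduce the statement to an identity involving the Bercovici--Pata bijection $\mathbb{B}$. Since $\mu$ is free regular, Theorem \ref{charac}(iii) gives $\mu = \mathbb{B}(\tilde{\mu})$ for some $\tilde{\mu}\in\mathcal{M}^{+}$; the case $\tilde{\mu}=\delta_{0}$ is trivial, so assume $\tilde{\mu}\neq\delta_{0}$. For such $\tilde{\mu}$, the free multiplicative power $\tilde{\mu}^{\boxtimes s}$ is well defined for every $s\geq 1$ as a measure in $\mathcal{M}^{+}$, uniquely characterized by $S_{\tilde{\mu}^{\boxtimes s}}(z)=S_{\tilde{\mu}}(z)^{s}$, and the family $(\tilde{\mu}^{\boxtimes s})_{s\geq 1}$ is a weakly continuous semigroup for $\boxtimes$.

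Define $\nu_{s}:=\mathbb{B}(\tilde{\mu}^{\boxtimes s})$ for $s\geq 1$. Since $\tilde{\mu}^{\boxtimes s}\in\mathcal{M}^{+}$, Theorem \ref{charac}(iii) yields $\nu_{s}\in I_{r+}^{\boxplus}$. Applying the Belinschi--Nica homomorphism (\ref{homo}) together with the semigroup property of $(\tilde{\mu}^{\boxtimes s})_{s\geq 1}$,
\[
\nu_{s}\boxtimes\nu_{t}=\mathbb{B}(\tilde{\mu}^{\boxtimes s})\boxtimes\mathbb{B}(\tilde{\mu}^{\boxtimes t})=\mathbb{B}(\tilde{\mu}^{\boxtimes(s+t)})=\nu_{s+t},\qquad s,t\geq 1,
\]
so $(\nu_{s})_{s\geq 1}$ is a weakly continuous $\boxtimes$-semigroup (using that $\mathbb{B}$ is a weak homeomorphism) with $\nu_{1}=\mathbb{B}(\tilde{\mu})=\mu$. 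The proposition reduces to showing $\nu_{s}=\mu^{\boxtimes s}$ for every $s\geq 1$, since then $\mu^{\boxtimes s}\in \mathbb{B}(\mathcal{M}^{+})=I_{r+}^{\boxplus}$.

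To identify $\nu_{s}$ with $\mu^{\boxtimes s}$, fix $z$ in a common domain of analyticity of the relevant $S$-transforms and consider $f(s):=\log S_{\nu_{s}}(z)$. Multiplicativity of the $S$-transform under $\boxtimes$ (Proposition \ref{StransfPos}) combined with the semigroup property of $(\nu_{s})_{s\geq 1}$ gives $f(s+t)=f(s)+f(t)$ for $s,t\geq 1$, and weak continuity of $s\mapsto \nu_{s}$ translates into continuity of $f$. A standard bootstrap gives $f(n)=nf(1)$ for integer $n\geq 1$, then $qf(p/q)=f(p)=pf(1)$ for rational $p/q\geq 1$, and finally $f(s)=sf(1)$ for all $s\geq 1$ by continuity. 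Hence $S_{\nu_{s}}(z)=S_{\mu}(z)^{s}=S_{\mu^{\boxtimes s}}(z)$, and uniqueness of a measure in $\mathcal{M}^{+}$ from its $S$-transform yields $\nu_{s}=\mu^{\boxtimes s}$.

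The main obstacle is the analytic framework for non-integer powers: one must justify that $\mu^{\boxtimes s}$ is well defined as a weakly continuous $\boxtimes$-semigroup for $s\geq 1$ (the Belinschi--Bercovici extension of free multiplicative convolution powers for measures in $\mathcal{M}^{+}$) and that the $S$-transforms $S_{\nu_{s}}$ and $S_{\mu^{\boxtimes s}}$ are defined on a common open set on which both the functional-equation argument and the $S$-transform uniqueness theorem apply. Once this technical setup is secured, the identification $\nu_{s}=\mu^{\boxtimes s}$ is just the short Cauchy-type argument above.
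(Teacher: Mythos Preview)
Your argument is correct, but the paper takes a different and considerably shorter route. Instead of using characterization (iii) of Theorem~\ref{charac}, the paper verifies characterization~(iv) directly by invoking the dilation identity
\[
D_{t^{s-1}}\bigl((\mu^{\boxtimes s})^{\boxplus t}\bigr)=(\mu^{\boxplus t})^{\boxtimes s},\qquad t>0,\ s\geq 1,
\]
which is also essentially in \cite[Proposition~3.5]{BN08}. Since $\mu$ is free regular, $\mu^{\boxplus t}\in\mathcal{M}^{+}$ for every $t>0$, hence $(\mu^{\boxplus t})^{\boxtimes s}\in\mathcal{M}^{+}$; the identity then gives $(\mu^{\boxtimes s})^{\boxplus t}\in\mathcal{M}^{+}$ for all $t>0$, and Lemma~\ref{characterization regular} finishes the proof in one line.

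Your approach has the mild advantage of producing the explicit identification $\mu^{\boxtimes s}=\mathbb{B}(\tilde{\mu}^{\boxtimes s})$, but it pays for this with the semigroup-uniqueness step. Note that your Cauchy-type argument is slightly nonstandard because the additive relation $f(s+t)=f(s)+f(t)$ is only available for $s,t\geq 1$; it still works (your rational bootstrap is valid since every partial sum $k\cdot(p/q)$ stays $\geq 1$), but it is not quite the textbook Cauchy equation. The paper's identity sidesteps all of this analytic bookkeeping, and in particular never needs to touch $S$-transform domains or weak continuity of the semigroup. If you want to keep your route, you could shorten it by observing that both $(\nu_s)_{s\geq 1}$ and $(\mu^{\boxtimes s})_{s\geq 1}$ are weakly continuous $\boxtimes$-semigroups in $\mathcal{M}^{+}$ with the same value at $s=1$, and quote a uniqueness statement for such semigroups rather than redoing the functional-equation computation by hand.
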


\begin{proof}
By Lemma \ref{characterization regular}, it is enough to see that $(\mu ^{\boxtimes s})^{\boxplus t}\in \mathcal{M}^{+}$ for all $t>0$. For this, we use the following identity, essentially proved in \cite[Proposition 3.5]{BN08}:
\begin{equation}\label{1418}
D_{t^{s-1}}((\mu ^{\boxtimes s})^{\boxplus t})=(\mu ^{\boxplus
t})^{\boxtimes s}.
\end{equation}
Now, since $\mu $ is free regular, $\mu ^{\boxplus t}\in \mathcal{M}^{+}$
and then $(\mu ^{\boxplus t})^{\boxtimes s}\in \mathcal{M}^{+}$. Therefore, the RHS of Equation (\ref{1418})
defines a probability measure with positive support and then $(\mu ^{\boxtimes s})^{\boxplus t}\in \mathcal{M}^{+}$, as desired.
\end{proof}
Also, boolean powers less than one preserve free regularity.
\begin{proposition}
If $\mu \in I_{r+}^{\boxplus }$, then $\mu ^{\uplus t}\in I_{r+}^{\boxplus
} $ for $0\leq t\leq 1.$
\end{proposition}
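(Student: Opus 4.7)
The plan is to verify condition (v) of Theorem \ref{charac} for $\mu^{\uplus t}$: namely, that $\mu^{\uplus t}\in I^\boxplus$, that $\tau_{\mu^{\uplus t}}((-\infty,0))=0$, and that $\phi_{\mu^{\uplus t}}(-0)\geq 0$. Since $\mu\in I_{r+}^\boxplus\subset I^\boxplus$, I may use the L\'evy--Khintchine representation of $\phi_\mu$ with $\tau_\mu$ supported on $[0,\infty)$ and $\phi_\mu(-0)\geq 0$; moreover $\mu^{\boxplus(1-t)}\in\mathcal{M}^+$ for all $t\in[0,1]$ by Lemma \ref{characterization regular}. These are the inputs I would feed into the main step.

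The main step is to derive the compositional identity
\[
\phi_{\mu^{\uplus t}}(z) \;=\; t\,\phi_\mu\!\bigl(F_{\mu^{\boxplus(1-t)}}(z)\bigr),\qquad z\in\mathbb{C}_+.
\]
I would start from $K_{\mu^{\uplus t}}=tK_\mu$, which in terms of $F$ reads $F_{\mu^{\uplus t}}(z)=(1-t)z+tF_\mu(z)$. Setting $g=F_\mu(z)$ so that $z=g+\phi_\mu(g)$ and $w:=F_{\mu^{\uplus t}}(z)$, one computes $w=g+(1-t)\phi_\mu(g)=F_{\mu^{\boxplus(1-t)}}^{-1}(g)$, so $g=F_{\mu^{\boxplus(1-t)}}(w)$ and therefore $\phi_{\mu^{\uplus t}}(w)=z-w=t\phi_\mu(g)$, which is the claimed identity. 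Because $F_{\mu^{\boxplus(1-t)}}$ maps $\mathbb{C}_+$ into $\mathbb{C}_+$ and $\phi_\mu$ is a Pick-class function on $\mathbb{C}_+$ with the correct growth at infinity, the composition inherits these properties, so the free L\'evy--Khintchine form of $\phi_{\mu^{\uplus t}}$ follows and $\mu^{\uplus t}\in I^\boxplus$.

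With the identity in hand, both of the remaining conditions in (v) fall out. For the support of $\tau_{\mu^{\uplus t}}$, the function $F_{\mu^{\boxplus(1-t)}}$ extends analytically across $(-\infty,0)$ (since $\mu^{\boxplus(1-t)}\in\mathcal{M}^+$) and maps $(-\infty,0)$ into itself, while $\phi_\mu$ extends analytically across $(-\infty,0)$ (since $\tau_\mu$ is supported on $[0,\infty)$); hence the composition extends $\phi_{\mu^{\uplus t}}$ past $(-\infty,0)$, forcing $\tau_{\mu^{\uplus t}}((-\infty,0))=0$. For the value at zero, differentiating the L\'evy--Khintchine form of $\phi_\mu$ gives
\[
\phi_\mu'(a)\;=\;-\int_{[0,\infty)}\frac{1+x^2}{(a-x)^2}\,\tau_\mu(dx)\;\leq\;0 \quad \text{for } a<0,
\]
so $\phi_\mu$ is non-increasing on $(-\infty,0]$ and hence $\phi_\mu(a)\geq\phi_\mu(-0)\geq 0$ for all $a\leq 0$. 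Applying this to $a=F_{\mu^{\boxplus(1-t)}}(-0)\leq 0$ yields $\phi_{\mu^{\uplus t}}(-0)=t\,\phi_\mu(a)\geq 0$, as required.

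The principal obstacle is the derivation and analytic justification of the compositional identity for $\phi_{\mu^{\uplus t}}$; once it is established, the two positivity conditions of (v) follow from elementary analytic continuation and a sign computation that only use the hypotheses $\tau_\mu\subset[0,\infty)$ and $\mu^{\boxplus(1-t)}\in\mathcal{M}^+$.
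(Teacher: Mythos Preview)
Your proof is correct, and the compositional identity you derive is in fact equivalent to the identity the paper quotes from \cite{ArHa1}: writing $\nu=\mu^{\boxplus(1-t)}$ and using $\phi_\mu=\tfrac{1}{1-t}\phi_\nu$ together with $\phi_\nu\circ F_\nu=K_\nu$, your formula $\phi_{\mu^{\uplus t}}=t\,\phi_\mu\circ F_\nu$ becomes $\phi_{\mu^{\uplus t}}=\tfrac{t}{1-t}K_\nu$, i.e.\ $\mu^{\uplus t}=\mathbb{B}\bigl(\nu^{\uplus t/(1-t)}\bigr)$. From that point on the two arguments diverge. The paper invokes characterization (iii) of Theorem~\ref{charac}: since $\nu=\mu^{\boxplus(1-t)}\in\mathcal{M}^+$ and boolean powers preserve $\mathcal{M}^+$, one has $\mu^{\uplus t}\in\mathbb{B}(\mathcal{M}^+)=I_{r+}^\boxplus$ in one line. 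You instead verify characterization (v) directly, which costs you the analytic-continuation argument for $\tau_{\mu^{\uplus t}}$ and the monotonicity computation for $\phi_\mu$ on $(-\infty,0)$. The trade-off is that your route is entirely self-contained (you re-derive the key identity rather than cite it), whereas the paper's is shorter but relies on an external reference. One small point worth tightening in your derivation: the step ``$w=F_{\mu^{\uplus t}}(z)$, hence $\phi_{\mu^{\uplus t}}(w)=z-w$'' uses that $F_{\mu^{\uplus t}}^{-1}$ is a two-sided inverse at $w$, which is only guaranteed on a truncated cone; this is harmless since the identity then extends to all of $\mathbb{C}_+$ by analyticity, but it is worth saying so.
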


\begin{proof}
It is shown in \cite{ArHa1} that if $\mu $ is $\boxplus$-infinitely divisible
then, for $0 < t < 1$,  
\begin{equation*}
\mathbb{B}((\mu ^{\boxplus (1-t)})^{\uplus t/(1-t)})=\mu ^{\uplus t}.
\end{equation*}%
Since $\mu $ is free regular, $\mu ^{\boxplus (1-t)}$ has a positive support and then,
since boolean convolution preserves measures with positive support, $
(\mu ^{\boxplus (1-t)})^{\uplus t/(1-t)}\in \mathcal{M}^{+}$. On
the other hand $\mathbb{B}$ sends positive measures to free regular
measures. 
\end{proof}

Finally we show that $I_{r+}^{\boxplus }$ is closed under convergence in distribution.

\begin{proposition}
\label{ConvRegular}Let $\left( \mu _{n}\right) _{n>0}$ be a sequence of
measures in $I_{r+}^{\boxplus }$. Suppose that $\mu _{n}{\rightarrow }\mu $
 for some measure $\mu .$ Then $\mu $ is also in $I_{r+}^{\boxplus }$. 
\end{proposition}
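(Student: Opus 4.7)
The plan is to apply characterization (iii) of Theorem \ref{charac}: a probability measure lies in $I_{r+}^{\boxplus}$ if and only if it is the image under $\mathbb{B}$ of a measure in $\mathcal{M}^+$. The key ingredients are two standard facts recalled in Section \ref{prel}: the map $\mathbb{B}$ is a homeomorphism of $\mathcal{M}$ onto $I^\boxplus$ with respect to weak convergence, and the set $\mathcal{M}^+$ is weakly closed in $\mathcal{M}$.

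First, by characterization (iii), for each $n$ there exists $\nu_n \in \mathcal{M}^+$ with $\mu_n = \mathbb{B}(\nu_n)$. Since $\mathbb{B}$ is a homeomorphism of $\mathcal{M}$ onto $I^\boxplus$, the weak convergence $\mu_n \to \mu$ forces $\mu \in I^\boxplus$ (so $\mathbb{B}^{-1}(\mu)$ is defined), and furthermore the preimages converge weakly: $\nu_n = \mathbb{B}^{-1}(\mu_n) \to \mathbb{B}^{-1}(\mu) =: \nu$.

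Next, since each $\nu_n$ is supported on the closed set $\mathbb{R}_+$, the Portmanteau theorem gives $\nu(\mathbb{R}_+) \geq \limsup_n \nu_n(\mathbb{R}_+) = 1$, so $\nu \in \mathcal{M}^+$. Therefore $\mu = \mathbb{B}(\nu) \in \mathbb{B}(\mathcal{M}^+)$, which by characterization (iii) means $\mu \in I_{r+}^\boxplus$, as required.

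There is no serious obstacle here: the proof is essentially the observation that $I_{r+}^\boxplus = \mathbb{B}(\mathcal{M}^+)$ is the continuous image of a weakly closed set under a homeomorphism, hence weakly closed itself. Alternatively, one could bypass $\mathbb{B}$ and argue directly via characterization (iv): for each fixed $t > 0$, continuity of the free convolution semigroup map $\rho \mapsto \rho^{\boxplus t}$ on $I^\boxplus$ gives $\mu_n^{\boxplus t} \to \mu^{\boxplus t}$ weakly, and since each $\mu_n^{\boxplus t} \in \mathcal{M}^+$ the limit $\mu^{\boxplus t}$ is supported on $\mathbb{R}_+$; Lemma \ref{characterization regular} then yields $\mu \in I_{r+}^\boxplus$.
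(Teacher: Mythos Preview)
Your proof is correct and follows essentially the same route as the paper: use the characterization $I_{r+}^{\boxplus}=\mathbb{B}(\mathcal{M}^+)$, pull back through the homeomorphism $\mathbb{B}$, and use that $\mathcal{M}^+$ is weakly closed. You are in fact slightly more careful than the paper in justifying $\nu\in\mathcal{M}^+$ via Portmanteau, and your alternative argument through characterization (iv) is a nice bonus.
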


\begin{proof}
Let $\mu_n$ be a sequence of measures in $I^\boxplus_{r+}$ converging to $\mu$. Then, for each $n\in\mathbb{N}$, $\mu_n=\mathbb{B}(\nu_n)$ for some $\nu_n$ in $\mathcal{M}^+$. Since $\mathbb{B}$ is a homeomorphism $\nu_n\rightarrow\nu$, with $\nu$ some probability measure in $\mathcal{M}^+$ and satisfies that $\mathbb{B}(\nu)=\mu$. Hence $\mu\in I_{r+}^{\boxplus }$, as desired.
\end{proof}

\section{Squares of random variables with symmetric distributions in $I^\boxplus$}\label{sym}
We will prove Theorem \ref{main3} in this section. 
Given a probability measure $\mu$, we recall that $\mu ^{p}$ for $p \geq 0$ denotes the probability measure in $\mathcal{M}^{+}$ induced by the map $x\mapsto |x|^{p}$. For a measure $\lambda$ on $\real$ we denote by $\text{Sym}(\lambda)$ the symmetric measure $\frac{1}{2}\left(\lambda(dx) + \lambda(-dx)\right)$.



We quote a result from Sakuma and P\'erez-Abreu \cite[Theorem 12]{PeSa}. 
\begin{theorem}\label{thm123}
A symmetric probability measure $\mu$ is $\boxplus$-infinitely divisible if and only if there is a free regular distribution $\sigma$ such that 
$\calC^{\boxplus}_\mu(z) = \calC^{\boxplus}_\sigma(z^2)$. Moreover, the free characteristic triplets $(0, a_\mu,\nu_\mu)$ and $(\eta_\sigma,0,\nu_\sigma)$ are related as follows: 
$\nu_\mu = \text{\normalfont Sym}(\nu^{1/2}_\sigma)$ {\rm(}or equivalently $\nu_\sigma = \nu_\mu ^2$ {\rm )},  $a_\mu = \eta_\sigma$. 
\end{theorem}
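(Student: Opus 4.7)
My plan is to prove Theorem \ref{thm123} by direct manipulation of the free L\'{e}vy-Khintchine representation of Proposition \ref{Cum-Ole}. The key preliminary observation is that symmetry of $\mu$ makes $\calC^\bxp_\mu$ an even function of $z$: since $G_\mu$ is odd on $\C\setminus\real$, so are $F_\mu$ and $\phi_\mu$, hence $\calC^\bxp_\mu(z)=z\phi_\mu(1/z)$ is even. By the uniqueness part of Proposition \ref{Cum-Ole} this forces the drift $\eta_\mu$ to vanish and the L\'{e}vy measure $\nu_\mu$ to be symmetric.

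Given this, I would symmetrize the L\'{e}vy-Khintchine integrand under $t\mapsto -t$. Because $\nu_\mu$ is symmetric the two copies of the compensator $tz1_{[-1,1]}(t)$ cancel, and the elementary identity $\tfrac12\bigl(\tfrac{1}{1-zt}+\tfrac{1}{1+zt}\bigr)=\tfrac{1}{1-z^2t^2}$ combined with the pushforward of $\nu_\mu$ under $t\mapsto t^2$ (written $\nu_\mu^2$) yields
\[
\calC^\bxp_\mu(z) \;=\; a_\mu z^2 + \int_0^\infty\!\left(\frac{1}{1-z^2 s}-1\right)\nu_\mu^2(ds).
\]
This is already of the form $\calC^\bxp_\sigma(z^2)$ for the free regular measure $\sigma$ with triplet $(\eta_\sigma,0,\nu_\sigma):=(a_\mu,0,\nu_\mu^2)$. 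I would verify free regularity of $\sigma$ by checking the three conditions: $\eta_\sigma=a_\mu\geq0$ (Gaussian coefficient), $\nu_\sigma$ concentrated on $(0,\infty)$ (immediate from $\nu_\mu(\{0\})=0$), and the stronger integrability
\[
\int_0^\infty\min(1,s)\,\nu_\sigma(ds)=\int_\real\min(1,t^2)\,\nu_\mu(dt)<\infty.
\]
This gives the forward direction together with the explicit relation $\nu_\sigma=\nu_\mu^2$ and $a_\mu=\eta_\sigma$.

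For the converse, given free regular $\sigma$ with triplet $(\eta_\sigma,0,\nu_\sigma)$ I set $\nu_\mu:=\mathrm{Sym}(\nu_\sigma^{1/2})$ and $a_\mu:=\eta_\sigma$. The reverse integrability computation $\int_\real\min(1,t^2)\nu_\mu(dt)=\int_0^\infty\min(1,s)\nu_\sigma(ds)<\infty$ shows that $(0,a_\mu,\nu_\mu)$ is a valid symmetric free characteristic triplet; Proposition \ref{Cum-Ole} produces a symmetric $\mu\in I^\bxp$, and the same symmetrization identity applied in the reverse direction gives $\calC^\bxp_\mu(z)=\calC^\bxp_\sigma(z^2)$.

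The only subtle point is in the symmetrization step: the individual pieces $\int\bigl(\tfrac{1}{1-zt}-1\bigr)\nu_\mu(dt)$ and $\int tz1_{[-1,1]}(t)\nu_\mu(dt)$ need not be separately convergent, since the former's integrand is only $O(|t|)$ near $0$ and $\nu_\mu$ may carry non-integrable mass there. So the cancellation of the compensator must be carried out inside the combined integral before anything is split off; only after this cancellation does the integrand become $O(t^2)$ near $0$, which makes both the change of variable to $s=t^2$ and the identification with the free regular formula legitimate. Beyond this bookkeeping I do not anticipate any substantial obstacle.
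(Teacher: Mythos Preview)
Your argument is correct. Note, however, that the paper does not itself prove Theorem \ref{thm123}: it is quoted from P\'erez-Abreu and Sakuma \cite[Theorem 12]{PeSa} without proof, so there is no in-paper argument to compare against. Your direct manipulation of the free L\'evy--Khintchine representation --- evenness of $\calC^\bxp_\mu$ forcing $\eta_\mu=0$ and symmetry of $\nu_\mu$, then symmetrizing the integrand so the compensators cancel and pushing forward by $t\mapsto t^2$ --- is precisely the natural self-contained proof, and your identification of the one delicate point (the cancellation must occur inside the combined integral before splitting, since $\nu_\mu$ need not integrate $|t|$ near zero) is accurate.

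One minor point worth tightening: the assertion that $\phi_\mu$ is odd is slightly informal, since $\phi_\mu$ is initially defined only on a truncated cone $\Gamma_{\alpha,M}\subset\mathbb{C}_+$, where $z$ and $-z$ cannot both lie. The cleanest way around this is to argue directly at the level of triplets: $D_{-1}\mu$ has free characteristic triplet $(-\eta_\mu,a_\mu,D_{-1}\nu_\mu)$ (e.g.\ because $\Lambda$ commutes with dilations and the corresponding fact is standard for the classical L\'evy--Khintchine representation), so $\mu=D_{-1}\mu$ together with uniqueness in Proposition \ref{Cum-Ole} gives $\eta_\mu=0$ and $\nu_\mu$ symmetric. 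With that adjustment the proof is complete.
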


The following proposition implies that the square of a symmetric measure which is $\boxplus$-infinitely divisible is also $\boxplus$-infinitely divisible. A similar result is proved for the rectangular free convolution of Benaych-Georges~\cite{BG08}.
\begin{proposition}\label{main} Let $\mu$ be a $\boxplus$-infinitely divisible  symmetric measure then $\mu^2 = m \boxtimes \sigma$, the compound free Poisson with rate $1$ and jump distribution $\sigma$, where $\sigma$ is the free regular distribution of Theorem \ref{thm123}. Conversely, if $\sigma$ is free regular, then $\text{\normalfont Sym}\left((m \boxtimes \sigma)^{1/2}\right)$ is $\boxplus$-infinitely divisible. 
\end{proposition}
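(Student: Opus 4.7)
The plan is to prove $\mu^2 = m \boxtimes \sigma$ by matching the free cumulant transforms of the two sides in a parametric form, with Theorem \ref{thm123} serving as the bridge between symmetric $\boxplus$-infinitely divisible measures and free regular ones. The converse then falls out of the bijectivity in that theorem.

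For the forward direction, the first ingredient is a transform identity for squares. Since $\mu$ is symmetric, $G_\mu$ is odd and a quick partial-fraction computation gives
\[
G_{\mu^2}(z^2) \;=\; \frac{G_\mu(z)}{z}, \qquad \text{equivalently}\qquad F_{\mu^2}(z^2) \;=\; z F_\mu(z).
\]
Parametrizing by $h = F_\mu(z)$, inverting, and converting $\phi_\nu$ to $\mathcal{C}^\boxplus_\nu$ via the basic identity $\phi_\nu(1/z) = \mathcal{C}^\boxplus_\nu(z)/z$ turns this into the parametric formula
\[
\mathcal{C}^\boxplus_{\mu^2}(W) \;=\; \mathcal{C}^\boxplus_\mu(z), \qquad W \;=\; \frac{z^2}{1 + \mathcal{C}^\boxplus_\mu(z)}.
\]
Substituting $\mathcal{C}^\boxplus_\mu(z) = \mathcal{C}^\boxplus_\sigma(z^2)$ from Theorem \ref{thm123} and setting $y = z^2$ reduces this to
\[
\mathcal{C}^\boxplus_{\mu^2}(W) \;=\; \mathcal{C}^\boxplus_\sigma(y), \qquad W \;=\; \frac{y}{1 + \mathcal{C}^\boxplus_\sigma(y)}.
\]

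The second step is to put $\mathcal{C}^\boxplus_{m \boxtimes \sigma}$ into the same parametric form. By Proposition \ref{StransfPos} and the standard computation $S_m(z) = 1/(1+z)$, one has $S_{m \boxtimes \sigma}(z) = S_\sigma(z)/(1+z)$. Unfolding the definitions of $\Psi_\nu$ and $S_\nu(z) = \chi_\nu(z)(1+z)/z$ yields the bridge identity $S_\nu(\mathcal{C}^\boxplus_\nu(z)) = z/\mathcal{C}^\boxplus_\nu(z)$ for any $\nu \in \mathcal{M}^+$ with $\nu \neq \delta_0$. Applied to both $m \boxtimes \sigma$ and $\sigma$: setting $u = \mathcal{C}^\boxplus_{m \boxtimes \sigma}(z)$ and $y = z(1+u)$, the identity forces $S_\sigma(u) = y/u$ and hence $u = \mathcal{C}^\boxplus_\sigma(y)$. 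This is exactly the parametric formula established above for $\mathcal{C}^\boxplus_{\mu^2}$, so $\mathcal{C}^\boxplus_{\mu^2} = \mathcal{C}^\boxplus_{m \boxtimes \sigma}$ on a common open region, which forces $\mu^2 = m \boxtimes \sigma$. The membership $\mu^2 \in I_{r+}^\boxplus$ is then immediate from Theorem \ref{closur}(1), since both $m$ and $\sigma$ are free regular.

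For the converse, Theorem \ref{thm123} sets up a bijection, so a given free regular $\sigma$ arises from a unique symmetric $\boxplus$-infinitely divisible $\mu$. The forward direction gives $\mu^2 = m \boxtimes \sigma$, whence $(m \boxtimes \sigma)^{1/2}$ is the distribution of $|X|$ for $X \sim \mu$; independent sign-flipping then recovers $\mathrm{Sym}((m \boxtimes \sigma)^{1/2}) = \mu$ by the symmetry of $\mu$. I expect the main technical obstacle to be the careful tracking of the analytic domains in $\mathbb{C}_-$ on which the various inverse transforms $F_\mu^{-1}$, $\chi_\nu$, and $\mathcal{C}^\boxplus_\nu$ are defined, so that each step of the parametric identities above is justified as an equality of analytic functions on a common open region rather than as a formal manipulation.
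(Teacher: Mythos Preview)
Your argument is correct, and it reaches the same destination as the paper: both proofs amount to showing that $\mu^2 = m\boxtimes\sigma$ is equivalent to $\mathcal{C}^{\boxplus}_\mu(z)=\mathcal{C}^{\boxplus}_\sigma(z^2)$, after which Theorem~\ref{thm123} handles both directions. The routes differ, however. The paper works entirely in the $S$-transform: it invokes the known identity $S_{\mu^2}(z)=\tfrac{z}{1+z}S_\mu(z)^2$ from \cite{APA09} together with $S_m(z)=\tfrac{1}{1+z}$, which immediately gives $zS_\sigma(z)=(zS_\mu(z))^2$, and then inverts using the fact that $z\mapsto zS_\lambda(z)$ is the compositional inverse of $\mathcal{C}^{\boxplus}_\lambda$. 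Your approach instead derives the square relation from first principles via the Cauchy-transform identity $G_{\mu^2}(z^2)=G_\mu(z)/z$, passes to a parametric expression for $\mathcal{C}^{\boxplus}_{\mu^2}$, and then matches it against a separate parametric computation of $\mathcal{C}^{\boxplus}_{m\boxtimes\sigma}$. What the paper's route buys is brevity: by importing the $S$-transform square formula it avoids the intermediate parametrizations entirely. What your route buys is self-containment: you do not need to appeal to \cite{APA09}, and the Cauchy-transform identity you start from is elementary. Your flagged concern about analytic domains is the right one in either approach; the paper is equally informal on this point.
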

\begin{proof}
We prove that the following are equivalent: 

(a) $\mu^2 = m\boxtimes \sigma$, 

(b) $\calC^{\boxplus}_\mu (z) = \calC^{\boxplus}_\sigma(z^2)$. 

\noindent
Indeed,  if $\mu^2 = m \boxtimes \sigma$, then $S_{\mu^2}(z) = S_m(z) S_\sigma(z) = \frac{1}{1+z}S_\sigma(z)$. 
Combined with the relation $S_{\mu^2}(z)  = \frac{z}{1+z}S_\mu(z)^2$, this implies $zS_\sigma(z) = (zS_\mu(z))^2$. 
Since the inverse of $zS_\lambda(z)$ is equal to $\calC^{\boxplus}_\lambda$ for a probability measure $\lambda$, we conclude that 
$(\calC^{\boxplus}_{\sigma})^{-1}(z) = ((\calC^{\boxplus}_{\mu})^{-1}(z))^2$, which is equivalent to (b). Clearly the converse is also true.
The desired result immediately follows from the above equivalence and Theorem \ref{thm123}. 
\end{proof}
This completes the proof of Theorem \ref{main3}(1). The result (2) for compound free Poissons is a consequence of Theorem \ref{thm123}.  


Now the following result of Sakuma and P\'erez-Abreu \cite[Theorem 22]{PeSa} follows as a consequence of Theorem \ref{main3}.
 
\begin{theorem}\label{PeSa2} Let $\sigma\in\mathcal{M}^+$ and $w$ be the standard semicircle law. Then $\sigma\boxtimes\sigma \in I^\boxplus_{r+}$ if and only if $\mu=w\boxtimes\sigma \in I^\boxplus$.
\end{theorem}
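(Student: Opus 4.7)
The plan is to reduce the equivalence to Theorem~\ref{main3}(1) applied to the symmetric measure $\mu := w\boxtimes\sigma$. First I would verify that $\mu$ is indeed symmetric: realising $\mu$ as the distribution of $\sigma^{1/2}\,s\,\sigma^{1/2}$ with $s$ a free semicircular element, the symmetry of $w$ lets one replace $s$ by $-s$ without changing the distribution, so $\mu$ and $-\mu$ have the same law.

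The next, and crucial, step is to express $\mu^2$ as a free multiplicative convolution involving $m$. Applying identity (\ref{square product}) with $\sigma\in\mathcal{M}^+$ and $w$ symmetric gives $\mu^2=\sigma\boxtimes\sigma\boxtimes w^2$. A direct push-forward of the semicircular density through $x\mapsto x^2$ identifies $w^2$ with the Marchenko--Pastur law $m$ appearing in Theorem~\ref{main3}, so $\mu^2 = m\boxtimes(\sigma\boxtimes\sigma)$. With this identity in hand, both implications should fall out of Theorem~\ref{main3}(1). For the forward direction, $\mu\in I^\boxplus$ together with symmetry produces a free regular measure $\tau$ with $\mu^2 = m\boxtimes\tau$; comparing with the previous identity and cancelling the factor $m$ via multiplicativity of the $S$-transform (Proposition~\ref{StransfPos}) gives $\sigma\boxtimes\sigma = \tau \in I^\boxplus_{r+}$. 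For the converse, if $\sigma\boxtimes\sigma$ is free regular, the converse half of Theorem~\ref{main3}(1) says that $\mathrm{Sym}\bigl((m\boxtimes(\sigma\boxtimes\sigma))^{1/2}\bigr) = \mathrm{Sym}(|\mu|)$ lies in $I^\boxplus$; since $\mu$ is already symmetric, this coincides with $\mu$.

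The only point requiring any care is the $S$-transform cancellation in the forward direction, where one needs $\sigma\neq\delta_0$ in order to invoke Proposition~\ref{StransfPos} honestly and thereby conclude $\tau = \sigma\boxtimes\sigma$; the degenerate case $\sigma=\delta_0$ is trivial, since then $\mu=\delta_0 \in I^\boxplus$ and $\sigma\boxtimes\sigma=\delta_0 \in I^\boxplus_{r+}$ both hold automatically. Everything else amounts to combining (\ref{square product}), the identity $w^2 = m$, and the symmetric-square dictionary supplied by Theorem~\ref{main3}.
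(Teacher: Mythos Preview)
Your proposal is correct and follows essentially the same route the paper intends: the paper simply states that Theorem~\ref{PeSa2} is a consequence of Theorem~\ref{main3}, and your argument makes this explicit via exactly the expected ingredients --- the symmetry of $w\boxtimes\sigma$, the identity $(\sigma\boxtimes w)^2=\sigma\boxtimes\sigma\boxtimes w^2$ from~(\ref{square product}), the fact $w^2=m$, and then an appeal to both directions of Theorem~\ref{main3}(1). Your handling of the degenerate case $\sigma=\delta_0$ and the $S$-transform cancellation is appropriate.
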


\begin{remark}
 It is not true that the square of a symmetric infinitely divisible
distribution in the classical sense is also infinitely divisible. For
instance, if $N_{1}$ and $N_{2}$ are independent Poissons then $SN=N_{1}-N_{2}$ is also
infinitely divisible and $(SN)^{2}$ is not infinitely divisible since the
support of $(SN)^{2}$ is $\{0,1,4,9,25...\}.$ (See \cite[pp. 51.]{StVH})
\end{remark}

There are two interesting consequences of Proposition \ref{main}. First, Proposition \ref{main} allows us to identify some non trivial free
regular measures which are in $I^{\ast}\cap I^{\boxplus}$: $ \chi^2$ and $ F(1,1)$. This will be explained in example \ref{chi}. 

The second consequence is on the commutator of two free even elements, which was pointed out to us by Speicher. 
See A.2 in the Appendix for the definition of even elements. In this case, an even element simply means that its distribution is symmetric. 

\begin{corollary}\label{commutator} Let $a_1,a_2$ be free, self-adjoint and even elements whose distributions  $\mu_1,\mu_2$ are $\boxplus$-infinitely divisible. Then the distribution of the free commutator $\mu_1\Square\mu_2:=\mu_{i(a_1a_2-a_2a_1)}$ is also $\boxplus$-infinitely divisible.
\end{corollary}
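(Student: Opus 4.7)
Set $c := i(a_1 a_2 - a_2 a_1)$. The plan is to deduce the claim from the symmetric case of Theorem~\ref{main3}(1) by reducing the $\boxplus$-infinite divisibility of $\mu_c$ to a statement about its square $\mu_{c^2}$.

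First I would check that $c$ is self-adjoint (immediate from $(a_1 a_2)^* = a_2 a_1$) and that $\mu_c$ is symmetric: since $a_2$ is even, there is a trace-preserving $*$-automorphism of $(\mathcal{A}, \varphi)$ fixing $a_1$ and sending $a_2 \mapsto -a_2$; this carries $c$ to $-c$, so $\mu_c = \mu_{-c}$. Hence by the converse direction of Theorem~\ref{main3}(1), it suffices to exhibit a free regular measure $\tau$ with $\mu_{c^2} = m \boxtimes \tau$.

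To achieve this I would use the $R$-diagonality of $x := a_1 a_2$ (a result of Nica-Speicher: the product of two free, even, self-adjoint elements is $R$-diagonal). Writing $x = uh$ with $u$ Haar unitary free from $h = (x^* x)^{1/2}$, the rotational invariance of the $*$-distribution of $x$ under $x \mapsto e^{i\theta}x$, specialised to $\theta = \pi/2$, gives that $c = i(x - x^*)$ and $x + x^*$ have the same distribution, so $\mu_{c^2} = \mu_{(x+x^*)^2}$. Expanding $\kappa_{2n}(c, \ldots, c)$ by multilinearity and using that the only non-vanishing free cumulants of $R$-diagonal $x$ are the alternating ones $\kappa_{2k}(x, x^*, x, x^*, \ldots)$, one obtains $\kappa_{2n}(c) = 2\,\alpha_n$ with $\alpha_n := \kappa_{2n}(x, x^*, \ldots, x, x^*)$. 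The Krawczyk-Speicher formula for cumulants of products then expresses $\alpha_n$ in terms of the free cumulants of $\mu_{|x|^2}$, and a combinatorial matching (this is what the appendix supplies) identifies $\mu_{c^2}$ as a compound free Poisson $m \boxtimes \tau$, where $\tau$ is a concrete measure built from $\mu_{|x|^2}$.

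Finally, by traciality and the definition of $\boxtimes$, $\mu_{|x|^2} = \mu_1^2 \boxtimes \mu_2^2$. Applying Theorem~\ref{main3}(1) to $\mu_1, \mu_2$ yields free regular $\sigma_1, \sigma_2$ with $\mu_i^2 = m \boxtimes \sigma_i$, and Theorem~\ref{closur}(1) then gives that $\mu_1^2 \boxtimes \mu_2^2 = (m \boxtimes \sigma_1) \boxtimes (m \boxtimes \sigma_2)$ is free regular. Since $\tau$ is obtained from $\mu_{|x|^2}$ by operations preserving free regularity, $\tau$ is free regular, and the converse of Theorem~\ref{main3}(1) completes the argument.

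\emph{Main obstacle.} The delicate step is the combinatorial identification of $\tau$: a moment check already shows that $\tau$ is not simply $\mu_1^2 \boxtimes \mu_2^2$ (for instance, if $a_1, a_2$ are free standard semicirculars, then $\mu_1^2 \boxtimes \mu_2^2 = m \boxtimes m$ has first moment $1$, while $\tau$ must have first moment $\varphi(c^2) = 2$, so a dilation-type correction arising from the $R$-diagonal cumulant structure is needed). Correctly tracking this correction via the non-crossing pair partitions that contribute to the alternating cumulants $\alpha_n$---and verifying that the resulting $\tau$ lies in $I_{r+}^\boxplus$ rather than merely in $\mathcal{M}^+$---is precisely what the combinatorial argument in the appendix is designed to accomplish.
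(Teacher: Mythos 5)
Your overall strategy (show $\mu_c$ is symmetric, identify $\mu_{c^2}$ as $m\boxtimes\tau$ with $\tau$ free regular, then invoke the converse of Theorem~\ref{main3}(1)) is the right one in spirit, and several of your intermediate observations are correct: $c$ is self-adjoint and even, $\mu_{x^*x}=\mu_1^2\boxtimes\mu_2^2$ by traciality, and your moment check showing $\tau\neq\mu_1^2\boxtimes\mu_2^2$ is accurate. But the proof has a genuine gap exactly at the step you flag as the ``main obstacle'': you never establish the identity $\mu_{c^2}=m\boxtimes\tau$, and the combinatorial matching you defer to ``the appendix'' is not what the appendix contains (the appendix proves Proposition~\ref{Sq Poisson} and reduces Theorem~\ref{commutator 2} to the even case; it does not compute the $R$-diagonal alternating cumulants of the commutator). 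Worse, by Theorem~\ref{main3}(1) the statement ``$\mu_{c^2}=m\boxtimes\tau$ for some free regular $\tau$'' is \emph{equivalent} to ``$\mu_c$ is symmetric and $\boxplus$-infinitely divisible,'' so without an independent computation of $\tau$ your argument is circular. The closing assertion that ``$\tau$ is obtained from $\mu_{|x|^2}$ by operations preserving free regularity'' is unsupported: identifying which operations these are is precisely the content of the missing step.

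The paper avoids this difficulty entirely by quoting the Nica--Speicher formula for the \emph{half} free convolution power, $\bigl((\mu_1\Square\mu_2)^{\boxplus 1/2}\bigr)^2=\mu_1^2\boxtimes\mu_2^2$, for which no correction term is needed. Writing $\mu_i^2=m\boxtimes\sigma_i$ with $\sigma_i$ free regular (Theorem~\ref{main3}(1)), one gets $\bigl((\mu_1\Square\mu_2)^{\boxplus 1/2}\bigr)^2=m\boxtimes\sigma$ with $\sigma=m\boxtimes\sigma_1\boxtimes\sigma_2$ free regular by Theorem~\ref{closur}(1); the converse of Theorem~\ref{main3}(1) then makes the half-power $\boxplus$-infinitely divisible, and so is its free additive double $\mu_1\Square\mu_2$. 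If you insist on working with $\mu_{c^2}$ itself, the correct identification (derivable only \emph{after} the half-power formula) is $\mu_{c^2}=m\boxtimes\sigma^{\boxplus 2}$ -- note the free \emph{additive} square, which is the ``dilation-type correction'' you sensed but did not pin down. To repair your write-up, replace the appeal to the appendix by the Nica--Speicher identity \eqref{comm} and route the argument through the half-power as above.
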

\begin{remark}
If $a_1, a_2$ are free, even and self-adjoint, the distribution of the anti-commutator $\mu_{a_1a_2+a_2a_1}$ is the same as $\mu_{i(a_1a_2-a_2a_1)}$~\cite{NiSp98}. 
\end{remark}
\begin{proof}
It was proved by Nica and Speicher \cite{NiSp98} that  $\mu_1\Square\mu_2$ is also symmetric and satisfies 
\begin{equation}\label{comm}
((\mu_1\Square\mu_2)^{\boxplus 1/2})^2=\mu^2_1 \boxtimes \mu^2_2. 
\end{equation}
Since, for $i=1,2$, the distribution $\mu_i$ is symmetric and belongs to $I^\boxplus$, by Proposition \ref{main}, we have the representation $\mu_i^2=m\boxtimes\sigma_i$, for some $\sigma_i$ free regular.  Then $((\mu_1\Square\mu_1)^{\boxplus 1/2})^2=m\boxtimes\sigma$ with $\sigma=m\boxtimes \sigma_1\boxtimes \sigma_2$. Now, by Theorem \ref{closure}, $\sigma$ is free regular and then $(\mu_1\Square\mu_2)^{\boxplus 1/2}$ is $\boxplus$-infinitely divisible. The desired result now follows.
\end{proof}

When we restrict $\mu_1$ to the standard semicircle law, we obtain the analog of Theorem \ref{PeSa2} for the free commutator.
\begin{corollary} Let $\sigma$ be a symmetric measure and $w$ be the standard semicircle law. Then $\sigma^2\in I^\boxplus_{r+}$ if and only if $\mu=w\square\sigma \in I^\boxplus$.
\end{corollary}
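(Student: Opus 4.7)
The plan is to specialize the free commutator identity~(\ref{comm}) of Corollary~\ref{commutator} to $\mu_1=w$ and $\mu_2=\sigma$. Both measures are symmetric, and a direct pushforward computation gives $w^2=m$ (the density of $X^2$ for $X\sim w$ is $\tfrac{1}{2\pi}\sqrt{(4-y)/y}$ on $(0,4)$, which is exactly the Marchenko--Pastur density). Thus (\ref{comm}) reads
\[
\bigl((w\square\sigma)^{\boxplus 1/2}\bigr)^2 \;=\; w^2\boxtimes\sigma^2 \;=\; m\boxtimes\sigma^2,
\]
so that, setting $\eta:=(w\square\sigma)^{\boxplus 1/2}$, one obtains a symmetric probability measure with $\eta^2=m\boxtimes\sigma^2$.

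The argument then splits into two equivalences which I would chain together. Firstly, $w\square\sigma\in I^\boxplus$ if and only if $\eta\in I^\boxplus$: the forward direction is the standard fact that a fractional $\boxplus$-power of a $\boxplus$-infinitely divisible measure remains in $I^\boxplus$, while the backward direction is immediate from $w\square\sigma=\eta^{\boxplus 2}$ together with the linearity of the free cumulant transform (doubling preserves the free L\'evy--Khintchine form). Secondly, since $\eta$ is symmetric, Theorem~\ref{main3}(1) asserts that $\eta\in I^\boxplus$ if and only if $\eta^2=m\boxtimes\tau$ for some free regular $\tau$. Substituting $\eta^2=m\boxtimes\sigma^2$ and cancelling the common factor $S_m$ (which is non-vanishing in a one-sided neighborhood of $0$ by Proposition~\ref{StransfPos}) forces $\tau=\sigma^2$, so this is equivalent to $\sigma^2\in I^\boxplus_{r+}$. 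For the converse half one may alternatively argue directly: if $\sigma^2$ is free regular then Theorem~\ref{closur}(1) gives $m\boxtimes\sigma^2\in I^\boxplus_{r+}$, and the converse part of Theorem~\ref{main3}(1) produces $\text{Sym}((m\boxtimes\sigma^2)^{1/2})$ in $I^\boxplus$, which is $\eta$ by uniqueness of the symmetric square root.

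The main conceptual point requiring care is the legitimacy of the symbol $(w\square\sigma)^{\boxplus 1/2}$ when $w\square\sigma$ is not yet known to be $\boxplus$-infinitely divisible. This is not a genuine obstacle, however, because Nica--Speicher's commutator formula (the content of identity~(\ref{comm})) produces $\eta$ intrinsically as the symmetric probability measure whose square is $w^2\boxtimes\sigma^2$, and shows that it satisfies $\eta\boxplus\eta=w\square\sigma$. Thus $\eta$ is defined independently of any infinite divisibility assumption, and the two equivalences above combine to give the corollary.
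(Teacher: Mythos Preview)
Your proof is correct and follows essentially the same route as the paper: specialize (\ref{comm}) with $w^2=m$ to obtain $\bigl((w\square\sigma)^{\boxplus 1/2}\bigr)^2=m\boxtimes\sigma^2$, then invoke Proposition~\ref{main} (equivalently Theorem~\ref{main3}(1)). Your additional care about the well-definedness of $(w\square\sigma)^{\boxplus 1/2}$ and the $S$-transform cancellation to identify $\tau=\sigma^2$ fills in details the paper leaves implicit, but the argument is the same.
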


\begin{proof}
It is well known that the $w^2=m$ and then we get from Equation (\ref{comm}) that $((w\Square\sigma)^{\boxplus 1/2})^2=m\boxtimes \sigma^2$. The result now follows from Proposition \ref{main}.
\end{proof}

Moreover, Nica and Speicher reduced the problem of calculating the cumulants of the free commutator to symmetric measures. A further analysis of this reduction in combination with Corollary \ref{commutator} enables us to omit the assumption of evenness.  
\begin{theorem} \label{commutator 2}
Let $a_1$ and $a_2$ be free and self-adjoint elements, and let $\mu_1:=\mu_{a_1}$ and $\mu_2:=\mu_{a_2}$ be $\boxplus$-infinitely divisible distributions. Then the distribution of the free commutator $\mu_1\Square\mu_2:=\mu_{i(a_1a_2-a_2a_1)}$ is also $\boxplus$-infinitely divisible.
\end{theorem}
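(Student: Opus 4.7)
The strategy is to combine the Nica--Speicher symmetrization of the free commutator with Corollary~\ref{commutator}, which already handles the even (symmetric) case. Concretely, given free self-adjoint $a_1, a_2$ with $\mu_j = \mu_{a_j} \in I^\boxplus$, I would produce free \emph{even} self-adjoint elements $\tilde a_1, \tilde a_2$, in a possibly enlarged $W^\ast$-probability space, such that (i) $i[\tilde a_1, \tilde a_2]$ has the same distribution as $i[a_1,a_2]$, and (ii) the distributions $\tilde \mu_j := \mu_{\tilde a_j}$ are symmetric and lie in $I^\boxplus$. Once these $\tilde a_j$ are in hand, Corollary~\ref{commutator} gives $\tilde \mu_1 \Square \tilde \mu_2 \in I^\boxplus$, and (i) forces $\mu_1 \Square \mu_2 = \tilde \mu_1 \Square \tilde \mu_2 \in I^\boxplus$.

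The construction of $\tilde a_j$ is the Nica--Speicher trick \cite{NiSp98}: introduce symmetric Bernoulli elements $s_1, s_2$ (free from each other and from $a_1, a_2$) and form $\tilde a_j$ as an appropriate self-adjoint expression in $s_j$ and $a_j$, chosen so that (a) $\tilde a_j$ is even, (b) $\tilde a_1$ and $\tilde a_2$ are free, and (c) the mixed free cumulants that determine the commutator moments coincide with those of $a_1, a_2$. The essential combinatorial point is that nonvanishing mixed free cumulants in the commutator expression involve only specific interleaving patterns of the $a_j$'s that are insensitive to the Bernoulli conjugation, so the commutator distribution is preserved.

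To verify (ii) I would realize $a_j$ as $X_1^{(j)}$ for a free additive L\'evy process $(X_t^{(j)})_{t\geq 0}$, and apply the same Bernoulli construction pointwise in $t$, yielding a new process $(\tilde X_t^{(j)})_{t\geq 0}$. Freeness of $s_j$ from the whole semigroup ensures that $(\tilde X_t^{(j)})$ still has free, stationary increments and is weakly continuous at $0$, so it is itself a free L\'evy process. Hence its marginal $\tilde \mu_j = \mathrm{law}(\tilde X_1^{(j)})$ inherits membership in $I^\boxplus$ from $\mu_j$, and is symmetric by construction.

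The main obstacle is step (i)--(c) combined with (ii): one must select the symmetrization recipe so that it simultaneously preserves the commutator distribution and commutes with the semigroup structure of the L\'evy process. Naive candidates fail one or the other: the pointwise symmetrization $\mathrm{Sym}(\mu)$ kills $\boxplus$-infinite divisibility (e.g.\ $\mathrm{Sym}(\delta_1)$ is Bernoulli), while matrix amplifications $\bigl(\begin{smallmatrix}0&a_j\\a_j&0\end{smallmatrix}\bigr)$ produce symmetric elements that are not mutually free. The delicate point, which must be read off from a careful cumulant-level analysis of the Nica--Speicher reduction, is that an $R$-diagonal style construction based on free symmetric Bernoullis does satisfy both requirements, and this is precisely the ``further analysis'' alluded to before the theorem.
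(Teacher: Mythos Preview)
Your overall strategy---reduce to the symmetric case via the Nica--Speicher observation that the commutator distribution depends only on the \emph{even} free cumulants of the factors, and then invoke Corollary~\ref{commutator}---is exactly the paper's strategy. The gap is in your step~(ii), where you argue that $\tilde\mu_j\in I^\boxplus$ by applying the Bernoulli construction pointwise to a free L\'evy process $(X_t^{(j)})_{t\ge0}$ using a \emph{single} symmetry $s_j$. This cannot work: whatever self-adjoint expression $\tilde X_t^{(j)}=P(s_j,X_t^{(j)})$ you choose, the increments $\tilde X_t^{(j)}-\tilde X_s^{(j)}$ all share the same $s_j$, so in general they are no longer free. (Conjugation $s_jX_t^{(j)}s_j$ does preserve freeness of increments, but it also preserves the distribution, so nothing is symmetrized.) You yourself flag this as ``the main obstacle,'' and the $R$-diagonal/Bernoulli recipe you allude to does not resolve it.

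The paper sidesteps the operator-level construction entirely and works at the level of cumulants. First it reduces, by weak approximation and the closure of $I^\boxplus$ under weak limits, to the case where each $\mu_j$ is a compound free Poisson $\pi(\lambda_j,\nu_j)$, so that $\kappa_n(\mu_j)=\lambda_j m_n(\nu_j)$. Then the symmetric replacement is obtained by symmetrizing the \emph{jump distribution}: set $\tilde\mu_j:=\pi(\lambda_j,\mathrm{Sym}(\nu_j))$. This is manifestly a symmetric compound free Poisson, hence in $I^\boxplus$, and it has the same even cumulants as $\mu_j$ because $m_{2n}(\mathrm{Sym}(\nu_j))=m_{2n}(\nu_j)$. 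Now \cite[Theorem~1.2]{NiSp98} gives $\mu_1\Square\mu_2=\tilde\mu_1\Square\tilde\mu_2$, and Corollary~\ref{commutator} finishes. The point you were missing is that the correct place to symmetrize is the L\'evy measure, not the law $\mu_j$ itself and not via an operator conjugation; at that level infinite divisibility is automatic.
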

The proof uses combinatorial tools and will be given in the Appendix.

\begin{remark}[Polynomials on free variables]
So far we have proved that if $a_1,a_2,a_3$ are free even random variables
whose distributions are $\boxplus$-infinitely divisible, then $i(a_ia_j-a_ja_i)$, $a_ia_j+a_ja_i$ and $a_i^2$ also have $\boxplus$-infinitely divisible distributions  
(for the free commutator, the assumption of evenness is not needed). Combining these results one can easily see that the following polynomials are also $\boxplus$-infinitely divisible:
$a_1^2+a_2^2+a_2a_1+a_2a_1$, $i(a_1a_2^2-a_2a_1^2)$, 
$a_1^4+a_2^4-a_2^2a_1^2-a_2^2a_1^2$,
$a_1a_2^2a_1+a_2a_1^2a_2+a_1a_2a_1a_2+a_2a_1a_2a_1$,
$a_1a_2^2a_1+a_2a_1^2a_2-a_1a_2a_1a_2-a_2a_1a_2a_1$,
$a_1a_2a_3+a_2a_1a_3+a_3a_1a_2+a_3a_2a_1$, etc. Therefore, it is natural to ask for which polynomials free 
infinite divisibility is
preserved.
\end{remark}

\section{Examples, conjectures and future problems}

In this section, we gather some examples related to our results.
From these examples, we also present open problems.

As a first example we use Theorem \ref{main3} to identify measures in $I^{\ast}\cap I^{\boxplus}_{r+}$.

\begin{example} \label{chi}
The following are measures which are both classically and
freely infinitely divisible.
\begin{enumerate}[{\rm (1)}]
\item
Let $\chi ^{2}$ be a chi-squared with $1$ degree of freedom with density $$f(x):=\frac{1}{\sqrt{2\pi x}}e^{-x/2},~~~~x>0.$$
It is well known that $\chi ^{2}$ is infinitely divisible in the classical sense.
It was proved in \cite{BBSL09}\ that a symmetric Gaussian $Z$ is $\boxplus$-infinitely divisible. 
Hence, by Theorem \ref{main3}, $Z^{2}$ is free regular. $Z^{2}\sim\chi ^{2}$ and then $\chi ^{2}\in I^{\ast}\cap I^{\boxplus}_{r+}$

\item
Let $F(1,n)$ be an $F$-distribution with density $$f(x):=\frac{1}{B(1/2,n/2)}\frac{1}{(nx)^{1/2}} \left( 1+\frac{x}{n} \right)^{-(1+n)/2},\quad x>0.$$ $F(1,n)$ is classically infinitely divisible, as can be seen in \cite{IsKe}.
On the other hand $F(1,n)$ is the square of a t-student with n degrees of freedom $t(n)$. In particular $t(1)$ is the Cauchy distribution, hence by 
Theorem \ref{main3}, $F(1,1)$  belongs to $I^{\ast}\cap I^{\boxplus}_{r+}$.
\end{enumerate}
\end{example}

\begin{remark}
Numeric computations of free cumulants have shown that the chi-squared with $2$ degrees of freedom is not freely infinitely divisible. However, the free infinite divisibility of $t$-student with $n$ degrees of freedom is still an open question.
\end{remark}

Next, we give some examples of free regular measure from known distributions in non-commutative probability.
\begin{example}\label{EX 1}
\begin{enumerate}[{\rm (1)}]
\item 
Free one-sided stable distributions with non-negative drifts.
These distributions are found by Biane in Appendix in \cite{Be-Pa99}. 
\item
The square of a symmetric $\boxplus$-stable law. By Theorem \ref{main3} it is free regular, and moreover, by the results of \cite{APA09} we can identify the L\'{e}vy measure $\sigma$ of Theorem \ref{main3} with a $\boxplus$-stable law. Indeed, any symmetric stable measure has the representation $w\boxtimes\nu_{\frac{1}{1+t}}$ and then 
by Equation \eqref{square product} the square is $w^2\boxtimes \nu_{\frac{1}{1+t}}\boxtimes \nu_{\frac{1}{1+t}}=m\boxtimes \nu_{\frac{1}{1+2t}}$.
\item
Free multiplicative, free additive and boolean powers of the free Poisson $m$.
In particular, for $t\geq 1$the free Bessel laws $m^{\boxtimes t} \boxtimes m^{\boxplus s}$ studied in \cite{BBCC} are free regular.
\item
The free Meixner laws, which are introduced by 
Saitoh and Yoshida~\cite{S-Y} and Anshelevich \cite{An03},
whose L\'evy measures are given by
$$
\nu_{a,b,c}(dx) =c\frac{\sqrt{4b-(x-a)^{2}}}{\pi x^{2}}1_{a-2\sqrt{b}<x<a+2\sqrt{b}}(x) dx. 
$$
If $a-2\sqrt{b}\geq 0$, then the L\'evy measure is concentrated on $[0,\infty)$ and $\int_{\real} \min (1,|x|) \nu_{a,b,c}(dx)<\infty$. Thus, if the drift term is non-negative, then it will be free regular. 
This case includes the free gamma laws, which come from interpretation by orthogonal polynomials not the Bercovici-Pata bijection.
\item
The beta distribution $B(1-a,1+a)$ ($0 < a < 1$) has the density 
\[
p_a(x) = \frac{\sin(\pi a)}{\pi a} x^{-a}(1-x)^{a},~~0 < x < 1. 
\]
$B(1-a,1+a)$ is $\boxplus$-infinitely divisible if and only if $\frac{1}{2} \leq a < 1$~\cite{ArHa2}. Moreover, $B(1-a,1+a)$ is free regular for $\frac{1}{2} \leq a < 1$ since $\int_{0}^1 \frac{p_a(x)}{x}dx = \infty$ (see Theorem \ref{thm110}).  We note that $B(\frac{1}{2},\frac{3}{2})$ coincides with the Marchenko-Pastur law. 
\end{enumerate}
\end{example}

\begin{example}\label{ex w4}
Let $w$ be the standard semicircle law. Then $w^{2}$ and $w^{4}$ are both free
regular. It is well known that $w^{2}=m ,$ which is free regular. From \cite{ABNPA09},\ if $\mathbf{b}_{s}$
is the symmetric beta $(1/2, 3/2)$ distribution, $\mathbf{b}_{s}$ is freely
infinitely divisible and then, by Theorem \ref{main3}, $(\mathbf{%
b}_{s})^{2}$ is free regular. 

The symmetric beta distribution $\mathbf{b}_{s}$ has density
\begin{equation*}
\mathbf{b}_{s}(\mathrm{d}x)=\frac{1}{2\pi }\left\vert x\right\vert
^{-1/2}(2-\left\vert x\right\vert )^{1/2}\mathrm{d}x,\quad \left\vert
x\right\vert <2.
\end{equation*}
Clearly $m_{2n}\left( \mathbf{b}_{s}\right) =m_{4n}(w)$ and then $(\mathbf{
b}_{s})^{2}=w^{4}.$ Also since $w^{4}=m^{2}=(\mathbf{b}_{s})^{2},$ $w^{4}$
is free regular.
\end{example}

\begin{remark}
It is not known if $w^{2n}$ is $\boxplus$-infinitely divisible for all $n>0,$ as
in classical probability.
\end{remark}

One may ask if the example $w_+$ is an exception but the following example shows that there are a lot of measures in $I^\boxplus\cap\mathcal{M}^+$ which are not $I^\boxplus_{r+}$. We also mention here that a quarter-circle distribution is 
not $\boxplus$-infinitely divisible.  
\begin{example}
\begin{enumerate}[{\rm (1)}]
\item
We present a method to construct freely infinitely divisible measures with positive support, but not free regular. 
Let $\mu\neq\delta_0$ be $\boxplus$-infinitely divisible with compact support, say $[-a,b]$. Then $\mu_1:=\mu\boxplus \delta_a$ has support $[0,b+a]$ and $\mu_2:=\mu\boxplus \delta_{-b}$ has support $[-(a+b),0]$. Both $\mu_1$ and $\tilde{\mu}_2(dx):=\mu_2(-dx)$ are in $I^{\boxplus}\cap\mathcal{M}^+$, but either $\mu_1$ or $\tilde{\mu}_2$ must not be free regular.  

Indeed suppose that $\mu_1$ is free regular, then $\mu_1=\Lambda(\nu_1)$ for some $\nu_1\in\mathcal{M}^+$ with unbounded positive support, say $[c,\infty)$. Now, recall that $\Lambda$  is a homomorphism, so that $\mu_2=\Lambda(\nu_1*\delta_{-a-b})$ but since the support of $\nu$ is $[b,\infty)$ then the support of $\widetilde{\nu_1*\delta_{-a-b}}$ is $(-\infty,a+b-c]$ and intersects $\mathbb{R}_-$, which means that $\tilde{\mu}_2$ is not free regular.
In particular, if $\mu$ is symmetric, any shift of $\mu$ is not free regular. Easy explicit examples can also be obtained from $\mu$ a free regular measure, for instance from (3), (4) and (5) of Example \ref{EX 1}.

\item 
Let $a_\alpha$ be the monotone $\alpha$-stable law characterized by $F_{a_\alpha}(z) = (z^\alpha +e^{i\alpha \pi})^{1/\alpha}$, where the powers $z^{\alpha}$ and $z^{1/\alpha}$ are respectively defined as $e^{\alpha\log z}$ and $e^{\frac{1}{\alpha}\log z}$ in $\mathbb{C}\backslash [0,\infty)$. The function $\log$ is not the principal value, but is defined so that $\text{\normalfont Im}(\log z) \in (0,2\pi)$. If $\alpha \in [\frac{1}{2}, 1]$, this measure is $\boxplus$-infinitely divisible and supported on $[0,\infty)$~\cite{ArHa2,Biane}. However this measure is not free regular, since 
the Voiculescu transform $\phi_{a_\alpha}(z) = (z^\alpha -e^{i\alpha \pi})^{1/\alpha} -z$ is not analytic in $\mathbb{C}\backslash[0,\infty)$.  In this case the support of the L\'evy measure is $[-1,\infty)$.  
\item 
Let $\sigma>0$.
Suppose $q$ be the quarter-circle distribution, that is, it has density
\begin{align*}
f_{q}(x) =
\begin{cases}
\frac{1}{\pi\sigma^{2}} \sqrt{4\sigma^{2}-x^{2}} & \mathrm{(}x \in [0,2\sigma]\mathrm{)},\\
0 & (\text{otherwise}). 
\end{cases}
\end{align*}
It is not freely infinitely divisible for any $\sigma>0$. We can find it by the following proposition of free kurtosis.
\begin{proposition}
If $\mu$ is freely infinitely divisible then the free kurtosis $\mathrm{kurt}^{\boxplus}(\mu)$ of $\mu$ is positive, that is,
$$
\mathrm{kurt}^{\boxplus}(\mu) =\frac{\widetilde{m_{4}}(\mu)}{(\widetilde{m_{2}}(\mu))^{2}}-2>0,
$$
where $\widetilde{m_{2}}(\mu)$, $\widetilde{m_{4}}(\mu)$ are 2nd and 4th moments around mean.
\end{proposition}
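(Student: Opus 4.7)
The plan is to recast the kurtosis as a pure ratio of free cumulants and then read those cumulants off the free L\'evy--Khintchine representation of Proposition \ref{Cum-Ole}. First I would apply the free moment--cumulant formula to the centered variable $X - \varphi(X)$, whose first free cumulant is zero; only non-crossing partitions of $\{1,\dots,n\}$ with no singleton blocks contribute. For $n=2$ this yields $\widetilde m_2(\mu) = \kappa_2(\mu)$, and for $n=4$ the non-crossing partitions without singletons are exactly $\{\{1,2,3,4\}\}$, $\{\{1,2\},\{3,4\}\}$, $\{\{1,4\},\{2,3\}\}$, giving $\widetilde m_4(\mu) = \kappa_4(\mu) + 2\kappa_2(\mu)^2$. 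Substituting into the definition,
$$\mathrm{kurt}^{\boxplus}(\mu) = \frac{\widetilde m_4(\mu)}{\widetilde m_2(\mu)^2} - 2 = \frac{\kappa_4(\mu)}{\kappa_2(\mu)^2},$$
so the problem reduces to showing $\kappa_4(\mu) > 0$.

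Next I would expand the integrand of the free L\'evy--Khintchine representation in powers of $z$. Using
$$\frac{1}{1-zt} - 1 - tz \mathbf{1}_{[-1,1]}(t) = tz\, \mathbf{1}_{|t|>1} + \sum_{n \geq 2}(tz)^n,$$
and comparing coefficients in $\calC^{\boxplus}_\mu(z)$ shows that $\kappa_n(\mu) = \int_{\real} t^n \, \nu(dt)$ for every $n \geq 3$, where $\nu$ is the free L\'evy measure of $\mu$ from Proposition \ref{Cum-Ole}. In particular $\kappa_4(\mu) = \int_{\real} t^4\, \nu(dt) \geq 0$, since $\nu$ is a non-negative measure and the integrand is non-negative. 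Combined with the first step this already gives $\mathrm{kurt}^{\boxplus}(\mu) \geq 0$.

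The only subtle point, which I expect to be the real obstacle to the strict inequality, is the boundary case. Since $\nu(\{0\}) = 0$, one has $\kappa_4(\mu) = 0$ if and only if $\nu \equiv 0$, and this forces $\calC^{\boxplus}_\mu(z) = \eta z + a z^2$, i.e. $\mu$ is a (possibly shifted) semicircle, for which the kurtosis vanishes. Thus strict positivity $\mathrm{kurt}^{\boxplus}(\mu) > 0$ holds for every $\boxplus$-infinitely divisible $\mu$ with non-zero variance that is not a shifted semicircle, and the proposition should be read under this implicit non-degeneracy. For the application to the quarter-circle distribution $q$ that motivates the statement, this caveat is harmless: direct computation of $\widetilde m_2(q)$ and $\widetilde m_4(q)$ gives a strictly negative ratio $\widetilde m_4(q)/\widetilde m_2(q)^2 - 2$, contradicting the conclusion and thus showing $q \notin I^{\boxplus}$.
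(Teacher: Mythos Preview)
The paper does not actually prove this proposition; it simply quotes the result and refers the reader to \cite{APA10}. Your argument is correct and is essentially the standard one: the moment--cumulant computation reducing $\mathrm{kurt}^{\boxplus}(\mu)$ to $\kappa_4(\mu)/\kappa_2(\mu)^2$, together with the identification $\kappa_4(\mu)=\int_{\real} t^4\,\nu(dt)$ from the free L\'evy--Khintchine representation, immediately yields the non-negativity.

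You are also right to flag the boundary case. The strict inequality as stated fails when $\nu\equiv 0$, i.e.\ when $\mu$ is a shifted semicircle (or a Dirac mass, where the kurtosis is undefined), so the proposition should properly be read as $\mathrm{kurt}^{\boxplus}(\mu)\geq 0$, with equality precisely in the semicircular case. For the application at hand---ruling out the quarter-circle distribution from $I^{\boxplus}$ by exhibiting a strictly negative free kurtosis---the weak inequality is all that is needed, so your caveat does not affect the use made of the proposition in the paper.
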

For more detail of free kurtosis, see p.171 in \cite{APA10}. 
Here we can obtain moments of $q$ as follows:
$$
m_{1}(q) = \frac{8\sigma}{3\pi}, \,\,m_{2}(q) = \sigma^{2}, \,\,
m_{3}(q) = \frac{2^{6}\sigma^{3}}{15\pi}, \,\,m_{4}(q) = 2\sigma^{4}.
$$
Therefore,
$$
\frac{\left(2-\frac{2^{12}}{3^{3}\pi^{4}}\right)}{\left(1-\frac{2^{6}}{3^{2}\pi^{2}}\right)^{2}}-2<0
$$
for any $\sigma>0$.
In fact, this amount is around $-0.0233443$.
\end{enumerate}
\end{example}

Recall from Proposition \ref{sakuma ex} that $w_+\boxtimes w_+$ is not freely infinitely divisible. Therefore, we have the following conjecture.
\begin{conjecture}
If $\mu \in\mathcal{M}^+$ is $\boxplus$-infinitely divisible, then $\mu \boxtimes \mu $ is $\boxplus$-infinitely divisible if and only if $\mu $ is free regular. 
\end{conjecture}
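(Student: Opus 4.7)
The easy direction $(\Leftarrow)$ follows immediately from Theorem \ref{closur}(1): if $\mu \in I^{\boxplus}_{r+}$ then $\mu\boxtimes\mu\in I^{\boxplus}_{r+}\subset I^{\boxplus}$.

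For the hard direction $(\Rightarrow)$, assume $\mu\in\mathcal{M}^{+}\cap I^{\boxplus}$ with $\mu\boxtimes\mu\in I^{\boxplus}$. The plan is to show $\mu\in I^{\boxplus}_{r+}$ in two stages: first upgrade $\mu\boxtimes\mu$ from merely $\boxplus$-infinitely divisible to free regular, and then transfer this regularity back to $\mu$ through the dilation identity.

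For the first stage, $\mu\boxtimes\mu$ already lies in $\mathcal{M}^{+}\cap I^{\boxplus}$, and Proposition \ref{StransfPos} gives $(\mu\boxtimes\mu)(\{0\})=\mu(\{0\})$. When $\mu(\{0\})>0$, Theorem \ref{thm110} applies directly. Otherwise the aim is to verify the singularity condition $\int_{0}^{1}(\mu\boxtimes\mu)(dx)/x=\infty$ of Theorem \ref{thm110} via an asymptotic analysis of the density of $\mu\boxtimes\mu$ near the origin, using $S_{\mu\boxtimes\mu}(z)=S_{\mu}(z)^{2}$ together with Stieltjes inversion; heuristically, squaring the $S$-transform should amplify any accumulation of $\mu$ at $0$.

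For the second stage, once $\mu\boxtimes\mu\in I^{\boxplus}_{r+}$ is in hand, invoke the dilation identity $D_{1/t}((\mu^{\boxplus t})^{\boxtimes 2})=(\mu\boxtimes\mu)^{\boxplus t}$ from Equation (\ref{dilation}) with $\nu=\mu$. By Lemma \ref{characterization regular}, the right-hand side is in $\mathcal{M}^{+}$ for every $t>0$, so the same should hold for $(\mu^{\boxplus t})^{\boxtimes 2}$ whenever this expression is meaningful; the target is then $\mu^{\boxplus t}\in\mathcal{M}^{+}$ for all $t>0$, which by Lemma \ref{characterization regular} yields $\mu\in I^{\boxplus}_{r+}$. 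The main obstacle lies precisely here: for $t$ large the positivity of $\mu^{\boxplus t}$ is automatic by Benaych-Georges, but for small $t$, if $\mu^{\boxplus t}$ escapes $\mathcal{M}^{+}$ then the operator-theoretic definition of $(\mu^{\boxplus t})^{\boxtimes 2}$ breaks down and the dilation identity has to be reinterpreted at the level of $S$-transforms via analytic continuation. Establishing a ``square root'' principle --- that $S_{\mu^{\boxplus t}}^{2}$ being the $S$-transform of a measure in $\mathcal{M}^{+}$ forces $S_{\mu^{\boxplus t}}$ itself to correspond to a measure in $\mathcal{M}^{+}$ --- appears to be the substantive missing ingredient, and is presumably why the statement is left as a conjecture rather than a theorem.
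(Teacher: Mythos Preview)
The statement is labeled a \emph{conjecture} in the paper and no proof is given there; the authors only motivate it by the counterexample $w_{+}\boxtimes w_{+}\notin I^{\boxplus}$ and by the positive result Proposition~\ref{closure}. Your handling of the easy direction $(\Leftarrow)$ is correct and matches exactly what the paper records just before stating the conjecture.

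For the hard direction, you have correctly diagnosed the essential obstruction in your stage~2: the dilation identity~(\ref{dilation}) is stated in the paper only for $\mu,\nu\in I^{\boxplus}_{r+}$, so invoking it already presupposes what you want, and the ``square root'' principle you isolate is indeed the missing ingredient. There is, however, an additional gap in your stage~1 that you have glossed over. Theorem~\ref{thm110} requires either an atom at $0$ or a non-integrable singularity there, but nothing in the hypotheses prevents $\mu$---and hence $\mu\boxtimes\mu$---from having support bounded away from $0$ (e.g.\ $\mu=\delta_{a}$ with $a>0$, or any free regular measure with strictly positive left edge). In such cases $\int_{0}^{1}(\mu\boxtimes\mu)(dx)/x<\infty$ trivially, so the appeal to Theorem~\ref{thm110} cannot succeed and your heuristic about the $S$-transform ``amplifying accumulation at $0$'' is not merely incomplete but inapplicable. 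Upgrading $\mu\boxtimes\mu$ from $I^{\boxplus}$ to $I^{\boxplus}_{r+}$ therefore needs an argument that works without any edge behaviour at the origin; one would presumably have to go through the characterization in Proposition~\ref{prop345} directly, controlling $a(\tau_{\mu\boxtimes\mu})$ and $\phi_{\mu\boxtimes\mu}(-0)$ in terms of the corresponding data for $\mu$, which is not obviously easier than the conjecture itself.
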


\begin{example} [free commutators]
\begin{enumerate}[{\rm (1)}]
\item 
 Let $\sigma_{s}$ and $\sigma_{t}$ be two symmetric free stable distributions of index $s$ and $t$, respectively. Then by Corollary \ref{commutator} the free commutator $\sigma_s\square \sigma_t$ is $\boxplus$-infinitely divisible. For the case $t=s=2$ (the Wigner semicircle distribution) the density of $w\square w$ is given by \cite{NiSp98}
 \begin{equation} \label{com w}
f(t)=\frac{\sqrt{3}}{2\pi \mid t\mid}\left( \frac{3t^2+1}{9h(t)}-h(t)\right),~~~~\mid t\mid\leq\sqrt{(11+5\sqrt{5})/2}, 
\end{equation}
where 
$$h(t)=\sqrt[3]{\frac{18t^2+1}{27}+\sqrt{\frac{t^2(1+11t^2-t^4)}{27}}}.$$ 

\item 
Let $w$ be the standard semicircle law and let $\nu_{\frac{1}{1+2s}}$ be a positive free stable law, for some $s>0$. If we denote $\hat \nu_{\frac{1}{1+2s}}=\text{\normalfont Sym}(\nu_{\frac{1}{1+2s}}^{1/2})$ then $\mu:=w\square\hat\nu_{\frac{1}{1+2s}}$ is a symmetric free stable distribution with index $\frac{2}{1+2s}$. Indeed, by Equation (\ref{comm}), $\mu$ satisfies
 $$(\mu^{\boxplus 1/2})^2=((w\square\hat\nu_{\frac{1}{1+2s}})^{\boxplus 1/2})^2=w^2\boxtimes\nu_{\frac{1}{1+2s}}=m\boxtimes\nu_{\frac{1}{1+2s}}.$$
 From Equation (\ref{square product}) and results in \cite{APA09} we see that  $m\boxtimes\nu_{\frac{1}{1+2s}}=(w\boxtimes\nu_{\frac{1}{1+s}})^2$. This means that $\mu^{\boxplus 1/2}=w\boxtimes\nu_{\frac{1}{1+s}}$ which is a symmetric free stable distribution with index $\frac{2}{1+2s}$. The case $s=1/2$ was treated in  \cite[Example 1.14]{NiSp98}.

\item
Assume that $b$ is a symmetric Bernoulli distribution $\frac{1}{2}(\delta_{-1}+\delta_{1})$.
Let $\mu,\nu$ be symmetric distributions. Then the free commutator $\mu\square \nu$  is 2-$\boxplus$-divisible, but when $\mu=\nu$ we can identify $(\mu\square \mu)^{\boxplus 1/2}$. Indeed, by Eq.\ (\ref{comm}), $(\mu\square \mu)^{\boxplus 1/2}= \sqrt{\mu^2\boxtimes\mu^2}$.  On the other hand, by Equation (\ref{square product}), $(\mu^2\boxtimes b)^2=\mu^2\boxtimes\mu^2$. Hence $(\mu^2\boxtimes b)^{\boxplus 2}=\mu\square\mu$. 

When $\mu=w$ a strange thing happens: $w^2=m$, and $m\boxtimes b$ is a compound free Poisson with rate $1$ and jump distribution $b$, see Remark \ref{rem free poisson}. This implies that $w\square w=m\boxplus \widetilde{m}$, where $\widetilde{m}$ is defined by $\widetilde{m}(B) =m(-B)$.
It is a free symmetrization of the Poisson distribution (not to be confused with the symmetric beta of Example \ref{ex w4}). As pointed out in \cite{NiSp98}, this gives another derivation of the density of $w\square w$ given in Equation (\ref{com w}).

\item
For the free Poisson with mean $1$, the free commutator becomes $m\square m=(m \boxtimes m \boxtimes b)^{\boxplus 2}$, the compound free Poisson with rate $2$ and jump distribution $m \boxtimes b$. Indeed, if we define $\hat m := m \boxtimes b$, we have that $m\square m=\hat m\square \hat m$ since the even free cumulants of $\hat m$ are all one, the same as those of $m$, and since the free commutator of measures depends only on the even cumulants of the measures~\cite[Theorem 1.2]{NiSp98}. By Equation (\ref{square product}) we have $\hat m^2=m\boxtimes m$, and therefore  by Equation (\ref{comm}), we have $$( (m\square m)^{\boxplus 1/2})^2=m \boxtimes m \boxtimes m \boxtimes  m.$$ Again using  Equation (\ref{square product}) we see that $ m \boxtimes m \boxtimes m \boxtimes  m= (m \boxtimes  m \boxtimes  b)^2$. The claim then follows.
\end{enumerate}
\end{example}





\begin {appendix}

\section{Combinatorial approach}

In this appendix we prove Theorem \ref{commutator 2}, using combinatorial tools. We also give a combinatorial proof of Theorem \ref{main3} which was proved with analytic tools. We decided not to include them in the main section of this article not only because they are more involved but also since, in principle, these proofs are only valid when the existence of moments is assumed. However, we believe that a reader who is more acquainted with the combinatorial approach may find them more illuminating.

\subsection{Free cumulants}

A measure $\mu $ has \textit{all moments} if $m_{k}(\mu )=\int_{\mathbb{R}}t^{k}\mu (\mathrm{d}t)<\infty ,$ for each even integer $k\geq 1$. Probability measures with compact support have all moments.

The \textbf{free cumulants} $(\kappa_{n})$ were introduced by Voiculescu~\cite{Vo86} as an analogue of classical cumulants, and were developed more by Speicher~\cite{Sp94} in his
combinatorial approach to free probability theory. We refer the reader to the book of Nica and Speicher \cite{NiSp06} for a nice introduction to this combinatorial approach. Let $\mu$ be a probability measure with compact support, then the cumulants are the coefficients $\kappa_{n}=\kappa_{n}(\mu )$ in the series expansion
\begin{equation*}
\calC^{\boxplus}_{\mu }\mathcal{(}z)=\sum\nolimits_{n=1}^{\infty }\kappa_{n}(\mu )z^{n}.
\end{equation*}
 For a sequence $(t_n)_{n\geq 1}$ and a partition $\pi =\{V_{1},...,V_{r}\}\in NC(n) $  we denote $t_{\pi }:=t_{\left\vert V_{1}\right\vert }\cdots \kappa_{\left\vert V_{r}\right\vert }.$ 

The relation between the free cumulants and the moments is described by the
lattice of non-crossing partitions $NC(n)$, namely,
\begin{equation}
m_{n}(\mu )=\sum\limits_{\mathbf{\pi }\in NC(n)}\kappa_{\pi }(\mu ). \label{free moment-cumulant
formula}
\end{equation}
Since free cumulants are just the coefficients of the series expansion of  $\calC^{\boxplus}_{\mu }\mathcal{(}z)$, they linearize free convolution: 
\begin{equation*}
\kappa_{n}\mathcal{(}\mu _{1}\boxplus \mu _{2})=\kappa_{n}\mathcal{(}\mu _{1})+\kappa_{n} \mathcal{(}\mu_{2}). 
\end{equation*}

A compound free Poisson $\mu$ with rate $\lambda$ and jump distribution $\nu$ can be characterized as 
\begin{equation*}
\kappa_{n}(\mu )=\lambda m_{n}(\nu ).  
\end{equation*}%
In particular, if $\mu$ is of the form 
$m \boxtimes \sigma$ for a probability measure $\sigma$ on $\real$, then $\kappa_n(m\boxtimes \sigma)=m_n(\sigma)$. 

Compound free Poissons are $\boxplus$-infinitely divisible, and moreover, any $\boxplus$-infinitely divisible probability measure is a weak limit of compound free Poissons.

\subsection{Even elements}

When $\mu $ has all moments, being symmetric is equivalent to having
vanishing odd moments, that is $m_{2k+1}(\mu )=\int_{\mathbb{R}}t^{2k+1}\mu (%
\mathrm{d}t)=0.$ On the other hand $\mu ^{2}$ has moments $m_{k}(\mu^{2})=m_{2k}(\mu )$.

An element $x\in (\mathcal{A},\varphi )$ is said to be \textbf{even} if the only
non vanishing moments are even, i.e. $\varphi (x^{2k+1})=0$. Even
elements correspond to symmetric distributions. It is clear by the moment-cumulant formula (\ref{free moment-cumulant formula}) that $x\in \mathcal{A}$ is even if and only if the only non-vanishing free cumulants are even. 
In this case we call $(\alpha _{n}:=\kappa_{2n}(x))_{n\geq 1}$ the determining sequence of $x$.

The next proposition gives a formula for the cumulants of the square of an
even element in terms of the cumulants of this element and can be found in \cite[Proposition 11.25]{NiSp06}

\begin{proposition}
\label{Kum of x^2}Let $x\in \mathcal{A}$ be an even element and let $(\alpha_{n}=\kappa_{2n}(x))_{n\geq 1}$ be the determining sequence of $x$. Then the cumulants of $x^{2}\,$\ are given as follows:
\begin{equation*}
\kappa_{n}(x^{2})=\sum\limits_{\mathbf{\pi }\in NC(n)}\alpha_{\pi }.
\end{equation*}
\end{proposition}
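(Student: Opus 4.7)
The plan is to combine the general Nica--Speicher formula for cumulants of products of entries with the evenness of $x$, and then identify the resulting index set with $NC(n)$ via a weight-preserving bijection.

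First, I would apply the product formula (Theorem 11.12 of \cite{NiSp06}) to expand
$$\kappa_n(x^2,\ldots,x^2) = \kappa_n(x\cdot x,\ldots,x\cdot x) = \sum_{\substack{\pi \in NC(2n) \\ \pi \vee \tau_n = 1_{2n}}} \kappa_\pi[x,\ldots,x],$$
where $\tau_n = \{\{1,2\},\{3,4\},\ldots,\{2n-1,2n\}\}$ is the pair partition that records the factorization of each $x^2$ into $x\cdot x$. Because $x$ is even, $\kappa_\pi[x,\ldots,x]$ vanishes unless every block of $\pi$ has even cardinality, in which case it equals $\prod_{V\in\pi}\alpha_{|V|/2}$. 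The sum therefore reduces to
$$\kappa_n(x^2) = \sum_{\pi \in A_n}\prod_{V \in \pi}\alpha_{|V|/2}, \qquad A_n := \{\pi \in NC(2n): \text{all blocks even},\ \pi \vee \tau_n = 1_{2n}\}.$$

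The heart of the argument is then a bijection $\Phi: A_n \to NC(n)$ sending a block of size $2k$ in $\pi$ to a block of size $k$ in $\Phi(\pi)$. I would define $\Phi$ by restricting $\pi$ to the even positions $\{2,4,\ldots,2n\}$ and relabelling $2i \mapsto i$. For the size match to be meaningful, every block of $\pi \in A_n$ must contain equal numbers of odd and even indices; I would prove this parity alternation by a short argument: if a block had two consecutive elements (in the natural order) of equal parity, the open interval between them would contain an odd number of indices, which cannot be exhausted by non-crossing blocks of even size, a contradiction.

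The main obstacle is verifying injectivity and surjectivity of $\Phi$. Given $\sigma \in NC(n)$, one must show that the odd indices $\{1,3,\ldots,2n-1\}$ can be reinserted into the ``fattened'' skeleton of $\sigma$ in exactly one way consistent with both non-crossing and the connectivity constraint $\pi \vee \tau_n = 1_{2n}$. The subtlety is that the naive insertion $V \mapsto \{2v-1,2v:v\in V\}$ simply reproduces $\tau_n$ rather than a $\pi$ connecting everything; instead, a specific ``twist'' is forced around each block in a way dictated by its nesting relative to the others. Once $\Phi$ is established, the formula
$$\kappa_n(x^2) = \sum_{\pi \in A_n}\prod_{V \in \pi}\alpha_{|V|/2} = \sum_{\sigma \in NC(n)}\alpha_\sigma$$
follows at once, which is exactly the claimed identity.
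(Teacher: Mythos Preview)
The paper does not give its own proof of this proposition; it simply quotes the result from \cite[Proposition 11.25]{NiSp06}. Your route via the product formula (Theorem 11.12 of \cite{NiSp06}) followed by a weight-preserving bijection $A_n \to NC(n)$ is correct and is essentially the argument used in that reference, so there is nothing to contrast.

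One small remark on your sketch: the inverse of your map $\Phi$ can be described cleanly via the Kreweras complement. Given $\sigma\in NC(n)$, place $\sigma$ on the even positions $\{2,4,\ldots,2n\}$ and its Kreweras complement $K(\sigma)$ on the odd positions $\{1,3,\ldots,2n-1\}$; the unique non-crossing partition of $\{1,\ldots,2n\}$ that refines this placement and whose blocks alternate parity is exactly $\Phi^{-1}(\sigma)$, and it automatically satisfies $\pi\vee\tau_n=1_{2n}$. This makes the ``twist'' you allude to explicit and disposes of the surjectivity/injectivity obstacle in one stroke.
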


Now we are able to prove the main result of this section.
\begin{proposition} \label{Sq Poisson}
Let $\mu $ be symmetric distribution with all moments. If $\mu $ is freely
infinitely divisible, then $\mu ^{2}$ is a compound free Poisson $\pi (\lambda ,\rho )$ with $\rho\in I^\boxplus_{r+}$. If
moreover $\mu $ is itself  a compound free  Poisson $\pi (\lambda ,\nu ),$
then $\rho $ is also a compound free  Poisson $\rho =\pi (\lambda ,\nu ^{2})$.
\end{proposition}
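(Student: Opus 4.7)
The plan is to reduce the statement to a direct application of Proposition \ref{Kum of x^2} combined with the moment-cumulant formula, and to extract the measure $\rho$ from the free L\'evy--Khintchine triplet of $\mu$.

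First I would apply Proposition \ref{Kum of x^2} to the symmetric (hence even) element $x$ with distribution $\mu$. With determining sequence $\alpha_k := \kappa_{2k}(\mu)$, one has
\[
\kappa_n(\mu^2) = \sum_{\pi \in NC(n)} \alpha_\pi.
\]
The right-hand side is formally the moment-cumulant formula (\ref{free moment-cumulant formula}) applied to a (not yet constructed) measure $\rho$ whose free cumulant sequence is $(\alpha_k)_{k \geq 1}$. Thus, once such a probability measure $\rho$ is produced, we immediately get $\kappa_n(\mu^2) = m_n(\rho)$, which by the combinatorial characterization of compound free Poissons stated just after (\ref{free moment-cumulant formula}) means $\mu^2 = \pi(1,\rho) = m\boxtimes\rho$. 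So the entire task reduces to constructing $\rho$ and showing $\rho \in I_{r+}^{\boxplus}$.

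The existence and regularity of $\rho$ I would extract from the free L\'evy--Khintchine representation of $\mu$. Since $\mu$ is symmetric and $\boxplus$-infinitely divisible (and has all moments, so the compensation in (\ref{levykintchine libre}) for a symmetric L\'evy measure contributes only to $\kappa_1$ and can be absorbed), one has
\[
\calC^{\boxplus}_{\mu}(z) = az^2 + \int_{\mathbb{R}}\left(\frac{1}{1-zt}-1\right)\nu(dt),
\]
with $a\geq 0$ and $\nu$ a symmetric L\'evy measure. Reading off coefficients gives $\kappa_{2k}(\mu) = a\,\delta_{k,1} + \int t^{2k}\,\nu(dt)$. Let $\nu^{(2)}$ denote the push-forward of $\nu$ under $t\mapsto t^2$, a measure on $[0,\infty)$. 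Setting
\[
\calC^{\boxplus}_{\rho}(z) := az + \int_{[0,\infty)}\left(\frac{1}{1-zx}-1\right)\nu^{(2)}(dx)
\]
defines a free regular L\'evy--Khintchine representation in the sense of (\ref{eq00}): the drift $a$ is nonnegative, $\nu^{(2)}$ is supported on $[0,\infty)$, and $\int \min(1,x)\,\nu^{(2)}(dx)=\int \min(1,t^2)\,\nu(dt)<\infty$ since $\nu$ is a L\'evy measure. Hence $\rho \in I_{r+}^{\boxplus}$, and by construction its free cumulants are exactly $(\alpha_k)_{k\geq 1}$. Combined with the first paragraph, this gives $\mu^2 = m\boxtimes\rho$, finishing part (1).

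For part (2), if $\mu = \pi(\lambda,\nu)$ is itself a compound free Poisson, then $\kappa_n(\mu) = \lambda m_n(\nu)$, so
\[
\kappa_k(\rho) = \alpha_k = \kappa_{2k}(\mu) = \lambda\, m_{2k}(\nu) = \lambda\,m_k(\nu^{2}),
\]
where $\nu^{2}$ is the push-forward of $\nu$ by $t\mapsto t^2$ (here $\nu$ may be written as $\nu = \text{Sym}(\nu^{1/2})$ if desired, but the even-moment identity is all we need). By the combinatorial characterization of compound free Poissons again, this is exactly the cumulant sequence of $\pi(\lambda,\nu^{2})$, so $\rho = \pi(\lambda,\nu^{2})$. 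The main point where care is needed is the passage from the formal identification of cumulants to an honest probability measure $\rho$; this is handled by the explicit free regular L\'evy--Khintchine representation above, which guarantees that $\rho$ arises from a genuine measure (in fact from a free subordinator, by Theorem \ref{charac}). Once $\rho$ is produced, the combinatorial identity of Proposition \ref{Kum of x^2} does all the remaining work with no further estimates.
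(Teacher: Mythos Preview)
Your proof is correct and takes a genuinely different route from the paper's appendix proof. The paper argues in the reverse order: it first treats the compound Poisson case exactly as you do in part~(2), and then for a general symmetric $\mu\in I^{\boxplus}$ it approximates $\mu$ weakly by symmetric compound free Poissons $\mu_n$, applies the special case to obtain $\mu_n^{2}=m\boxtimes\rho_n$ with each $\rho_n$ free regular, and passes to the limit using the closure of $I_{r+}^{\boxplus}$ under weak convergence (Proposition~\ref{ConvRegular}). You instead build $\rho$ directly from the free L\'evy--Khintchine data of $\mu$ --- this is precisely the construction underlying Theorem~\ref{thm123} --- and read off free regularity from the explicit form $(\eta',\nu^{(2)})=(a,\nu^{(2)})$ of the triplet, bypassing approximation entirely.

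Your route is cleaner in that it avoids the limiting step and the appeal to Proposition~\ref{ConvRegular}. The paper's route has the compensating advantage that it never needs to justify that the equalities $\kappa_n(\mu^{2})=m_n(\rho)$ pin down $\mu^{2}=m\boxtimes\rho$ as \emph{measures}: in the compound-Poisson step one may take compactly supported jump distributions, so moment determinacy is automatic, and the general case is then obtained by weak limits rather than by moment identification. In your argument this last point is the only place that deserves a word more of care; it is handled once one observes that your construction in fact yields the \emph{analytic} identity $\calC^{\boxplus}_{\mu}(z)=\calC^{\boxplus}_{\rho}(z^{2})$ on a domain (not merely at the level of Taylor coefficients), after which the $S$-transform computation of Proposition~\ref{main} gives $\mu^{2}=m\boxtimes\rho$. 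In effect, your argument is the combinatorial twin of the analytic proof in Section~\ref{sym}, while the paper's appendix proof is a pure ``compound Poisson plus approximation'' argument.
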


\begin{proof}
Let $x$ be an even element with distribution $\mu$ and suppose that $\mu$ is a symmetric compound free  Poisson with rate $\lambda $  and jump
distribution $\nu $ and let $\rho =\pi (\lambda ,\nu ^{2})$ be a 
compound free Poisson with rate $\lambda$ and jump distribution $\nu ^{2}$. Then
the determining sequence of $x$ is 
\begin{equation*}
\alpha _{n}=\kappa_{2n}(x)=\lambda m_{2n}(\nu )=\lambda m_{n}(\nu
^{2})=\kappa_{n}(\rho ). 
\end{equation*}%
By Proposition \ref{Kum of x^2} we have that 
\begin{equation*}
\kappa_{n}(x^{2})=\sum\limits_{\mathbf{\pi }\in NC(n)}\alpha
_{\pi }=\sum\limits_{\mathbf{\pi }\in NC(n)}\kappa_{\pi }(\rho)=m_{n}(\rho )
\end{equation*}%
and hence the distribution $\mu $ of $x^{2}$ is a compound free Poisson with
rate $1$ and jump distribution $\rho$. 

 More generally if $\mu \in I^\boxplus $ is symmetric, then $\mu$ can be
approximated by compound free Poissons which are symmetric, say $\mu
=\lim_{n\rightarrow \infty }\mu _{n}.$ By the previous case for each $ n>0$, $\mu _{n}^{2}=m\boxtimes\nu_n$
for some $\nu_n$ compound free Poisson, which is free regular. Since $\mu _{n}^{2}\rightarrow \mu ^{2}$ and $\nu_n \to \nu$ for some $\nu$, then $\mu=m\boxtimes\nu$. The measure $\nu$ is free regular since $I_{r+}^{\boxplus }$ is closed under the convergence in distribution.
\end{proof}

Finally, we use similar arguments to prove Theorem \ref{commutator 2} on free commutators.

\begin{proof}[Proof of Theorem \ref{commutator 2}]
By an approximation similar to Proposition \ref{Sq Poisson}, it is enough to consider $\mu_1$ and $\mu_2$ compound free  Poissons. Let $\mu_1\square\mu_2$ be the free commutator and  $\kappa_n(\mu_i)=\lambda_i m_n(\nu_i)$ the free cumulants of $\mu_i$, for $i=1,2$. It is clear that $m_{2n}(\nu_i)=m_{2n}(\text{\normalfont Sym}(\nu_i))$ and $m_{2n+1}(\text{\normalfont Sym}(\nu_i))=0$. 
 Now, by Theorem 1.2 in \cite{NiSp98}, the free cumulants of $\mu_1\square\mu_2$ only depend on the even free cumulants of $\mu_1$ and $\mu_2$, and therefore we can change $\mu_i$ by the symmetric compound Poisson with L\'{e}vy measure $\text{\normalfont Sym}(\nu_i)$. Thus by Corollary \ref{commutator} $\mu_1\square\mu_2$ is $\boxplus$-infinitely divisible as desired.
\end{proof}

\end{appendix}

\end{document}